\newcolumntype{R}[1]{>{\raggedleft\arraybackslash }b{#1}}
\newcolumntype{L}[1]{>{\raggedright\arraybackslash }b{#1}}
\newcolumntype{C}[1]{>{\centering\arraybackslash }b{#1}}
\def\NAT@def@citea{\def\@citea{\NAT@separator}}
\theoremstyle{plain}
\theoremstyle{definition}
\theoremstyle{remark}
\newtheorem{deff}{Definition}[section]
\newtheorem{prop}[deff]{Proposition}
\newtheorem{thm}[deff]{Theorem}
\newtheorem{lem}[deff]{Lemma}
\newtheorem{rmk}[deff]{Remark}
\newcounter{cst}
\newcommand*\diff{\mathop{}\!\mathrm{d}}
\def\R{\mathbb{R}}
\def\l{\lambda}
\def\d{{\rm d}}
\def\1{{\bf 1}}
\def\be{\begin{equation}}
\def\ee{\end{equation}}
\begin{document}

\articletype{Research Article}

\title{Uniform \textcolor{black}{estimates} for conforming Galerkin method for anisotropic singularly perturbed elliptic problems}

\author{
\name{David Maltese\textsuperscript{a} and Chokri Ogabi\textsuperscript{b}\thanks{\textsuperscript{b} Corresponding author. Email: Chokri.Ogabi@univ-eiffel.fr} }
\affil{\textsuperscript{a,b}LAMA, Univ. Gustave Eiffel, Univ. Paris Est Cr\'eteil, CNRS, F-77454 Marne-la-Vall\'ee, France.}
}

\maketitle

\begin{abstract}
In this article, we study some anisotropic singular perturbations for a class of linear elliptic problems. A uniform \textcolor{black}{estimates} for conforming $Q_1$ finite element method are derived, and some other results of convergence and regularity for the continuous problem are proved.
\end{abstract}

\begin{keywords}
Galerkin method, finite element method, singular perturbations, elliptic problem, uniform \textcolor{black}{estimates}, global rate of convergence.
\end{keywords}
\section*{Introduction}
The numerical study of singular \textcolor{black}{perturbation} problems keeps \textcolor{black}{an important place in numerical analysis}. Consider an elliptic problem $P_{\epsilon}(u_{\epsilon})=0$ where $\epsilon \in (0,1]$ is a small parameter. Let $P_{\epsilon,h}(u_{\epsilon,h})=0$ be the numerical approximation of the continuous problem $P_{\epsilon}$. 
The \textcolor{black}{estimates} obtained by a classical analysis, for instance, by the Céa's lemma, is of the form 
\begin{equation}\label{glob1}
\Vert u_{\epsilon,h}-u_{\epsilon} \Vert_{\Omega} \leq C\frac{h}{\epsilon^{\alpha}},
\end{equation}
where $\Vert \cdot \Vert_{\Omega}$ is some norm on a suitable space on $\Omega$. To ensure a good numerical approximation of the exact solution $u_{\epsilon}$ when $\epsilon$ is very small, then one must take $h$ much smaller than $\epsilon$, which is impractical from the numerical point of view. For some kind of numerical scheme which called asymptotically preserved, that is when we have $\lim_{\epsilon}\lim_{h}u_{\epsilon,h}=\lim_{h}\lim_{\epsilon}u_{\epsilon,h}$, one can obtain $\epsilon-$uniform \textcolor{black}{estimates}. In \cite{Sin} J. Sin gave a simple method to obtain such \textcolor{black}{estimates}. The idea is combining (\ref{glob1}) with \textcolor{black}{another estimate} of the form 
\begin{equation}\label{glob2}
\Vert u_{\epsilon,h}-u_{\epsilon} \Vert_{\Omega} \leq C (\epsilon^{\beta}+h^\gamma),
\end{equation}
to obtain the $\epsilon-$uniform \textcolor{black}{estimate}
$$\Vert u_{\epsilon,h}-u_{\epsilon} \Vert_{\Omega} \leq C h^{\min(\frac{\beta}{\alpha+\beta},\gamma)}. $$ 
For isotropic singular perturbations, which model diffusion phenomena in isotropic medium, many authors studied conforming and non conforming Galerkin methods for the following problem
$$ -\epsilon^{2} \Delta u_{\epsilon} +a u_{\epsilon} =f \text{, with } u=0 \text{ on } \partial \Omega,$$
where $\Omega$ \textcolor{black}{is a square} or a cube and $f$ is sufficiently regular (of class $C^2$) with some compatibility condition, that is when $f$ is zero on the edge of the vertices of $\Omega$. For instance, in \cite{Oriordan} a uniform \textcolor{black}{estimate} of the form $\vert\vert\vert u_{\epsilon,h}-u_{\epsilon} \vert\vert\vert _{\Omega}=O(h^{1/2})$ is proved, where $\vert\vert\vert \cdot \vert\vert\vert_{\Omega} $ is a variant of the energy norm. Some other quasi-uniform logarithmic \textcolor{black}{estimates} have been proved, see for instance the references \cite{Li1}, \cite{Li2}, \cite{Lin} and those cited therein.
 
In this article, we deal with anisotropic singular perturbations, which model diffusion phenomena in anisotropic medium. We
will prove some uniform \textcolor{black}{estimates} for the conforming Galerkin method in 2D and 3D for an elliptic problem with a general diffusion matrix, and a source term with low regularity. A prototype of such problems is given by the following bi-dimensional equation
\begin{equation} \label{prototype2d}
-\epsilon^{2} \partial_{x_1}^{2} u_{\epsilon}-\partial_{x_2}^{2} u_{\epsilon} =f.
\end{equation}
For the several results proved in this paper, we will consider different assumptions for the regularity of $f$, we give here the principal ones
\begin{equation} \label{fL2}
f \in L^{2}(\Omega), 
\end{equation}
\begin{equation}\label{fH2}
f \in H^{2}(\Omega).
\end{equation}
We will deal with the following general set-up of (\ref{prototype2d}) \cite{Chipot}
\begin{equation}
-\text{div}(A_{\epsilon }\nabla u_{\epsilon })=f~\text{%
in}~\Omega ,  \label{u-perturbed}
\end{equation}%
supplemented with the boundary condition 
\begin{equation}
u_{\epsilon }=0~\text{in}~\partial \Omega .  \label{u-perturbed-bord}
\end{equation}%
Here, $\Omega =\omega _{1}\times \omega _{2}$ where $\omega _{1}$ and $%
\omega _{2}$ are two bounded open sets of $%
\mathbb{R}
^{q}$ and $%
\mathbb{R}
^{N-q},$ with $N>q\geq 1$. We denote by $%
x=(x_{1},...,x_{N})=(X_{1},X_{2})\in 
\mathbb{R}
^{q}\times 
\mathbb{R}
^{N-q}$ i.e. we split the coordinates into two parts. With this notation we
set%
\begin{equation*}
\nabla =(\partial _{x_{1}},...,\partial _{x_{N}})^{T}=\binom{\nabla _{X_{1}}%
}{\nabla _{X_{2}}},\text{ }
\end{equation*}%
where 
\begin{equation*}
\nabla _{X_{1}}=(\partial _{x_{1}},...,\partial _{x_{q}})^{T}\text{ and }%
\nabla _{X_{2}}=(\partial _{x_{q+1}},...,\partial _{x_{N}})^{T}
\end{equation*}%
The matrix-valued function $A=(a_{ij})_{1\leq i,j\leq N}:\Omega \rightarrow \mathcal{M}%
_{N}(\mathbb{R})$ satisfies the classical ellipticity assumptions
\begin{itemize}
\item There exists $\lambda > 0$ such that for a.e. $x \in \Omega$ 
\begin{equation}
A(x)\xi \cdot \xi \geq \lambda \left\vert \xi \right\vert ^{2}~\text{for any}~
\xi \in 
\mathbb{R}
^{N}.  \label{hypA1}
\end{equation}

\item The elements of $A$ are bounded that is 
\begin{equation}  \label{hypA2}
a_{ij}\in L^{\infty }(\Omega )~\text{for any}~ (i,j) \in \{1,2,....,N\}^2.
\end{equation}
\end{itemize}
For the $H^2$ \textcolor{black}{estimates}, we suppose that $A$ satisfies the regularity assumption
\begin{equation}  \label{A-class-C1}
a_{ij}\in W^{1,\infty}(\bar{\Omega} )~\text{for any}~ (i,j) \in \{1,2,....,N\}^2,
\end{equation}
and the boundary condition 
\begin{equation}\label{Anullbord}
\text{ For every } i\neq j: a_{ij}=0 \text{ on } \partial \Omega.
\end{equation}
We have decomposed $A$ into four blocks
\begin{equation*}
A=%
\begin{pmatrix}
A_{11} & A_{12} \\ 
A_{21} & A_{22}%
\end{pmatrix}
\end{equation*}
where $A_{11}$, $A_{22}$ are $q\times q$ and $(N-q)\times (N-q)$ matrices
respectively. Notice that (\ref{hypA1}) implies that $A_{22}$ satisfies the ellipticity assumption
\begin{equation}
\text{ For a.e. } x \in \Omega: A_{22}(x)\xi_2 \cdot \xi_2 \geq \lambda \left\vert \xi_2 \right\vert ^{2}~\text{for any}~
\xi_2 \in 
\mathbb{R}
^{N-q}.  \label{hypA_221}
\end{equation}
For $\epsilon \in (0,1]$, we have set 
\begin{equation*}
A_{\epsilon }= 
\begin{pmatrix}
\epsilon ^{2}A_{11} & \epsilon A_{12} \\ 
\epsilon A_{21} & A_{22}%
\end{pmatrix}%
\end{equation*}%
For the \textcolor{black}{estimation} of the global rate of convergence for the continuous problem, we suppose the following additional assumption 
\begin{equation}
\text{ The block }A_{22}\text{ depends only on }X_{2}  \label{A22}
\end{equation}
The weak formulation of the problem ($\ref{u-perturbed}$)-($\ref{u-perturbed-bord}$%
) is 
\begin{equation}
\left\{ 
\begin{array}{l}
\int_{\Omega }A_{\epsilon
}\nabla u_{\epsilon }\cdot \nabla \varphi \d x=\int_{\Omega }f\text{ }\varphi
\d x, \quad \forall \varphi \in H_{0}^{1}(\Omega )\text{\ \ \ \  }
\\ 
u_{\epsilon }\in H_{0}^{1}(\Omega ),\text{\ \ \ \ \ \ \ \ \ \ \ \ \ \ \ \ \ \
\ \ \ \ \ \ \ \ \ \ \ \ \ \ \ \ \ \ \ \ \ \ \ \ \ \ \ \ \ \ \ \ \ \ \ \ \ \
\ \ \ \ \ \ \ \ \ \ \ \ \ \ \ \ \ \ \ \ \ \ \ \ \ \ \ }%
\end{array}%
\right.   \label{u-perturbed-weak}
\end{equation}%
where the existence and uniqueness is a consequence of the assumptions $(\ref%
{hypA1})$, $(\ref{hypA2})$.
The limit problem is given by
\begin{equation}
\textcolor{black}{-\text{div}_{X_{2}}(A_{22}\nabla_{X_2} u)=f~\text{in}~\Omega ,}
\label{u-limit}
\end{equation}
supplemented with the boundary condition 
\begin{equation}
u(X_{1},\cdot )=0~\text{in}~\partial \omega _{2},~\text{for}~X_{1}\in \omega
_{1}.  \label{u-limit-bord}
\end{equation}
We recall the Hilbert space \cite{ogabi4}
$$H_{0}^{1}(\Omega ;\omega _{2})=\left\{\begin{array}{cc} v\in L^{2}(\Omega )~\text{such that}%
~\nabla _{X_{2}}v\in L^{2}(\Omega )^{N-q} \\
\text{ and for a.e. } ~X_{1}\in
\omega _{1},~v(X_{1},\cdot )\in H_{0}^{1}(\omega _{2})
\end{array}\right\}, $$ 
equipped with the norm $\left\Vert \nabla _{X_{2}}(\cdot )\right\Vert
_{L^{2}(\Omega )^{N-q}}$. Notice that this norm is equivalent to 
\begin{equation*}
\left( \left\Vert (\cdot )\right\Vert _{L^{2}(\Omega )}^{2}+\left\Vert
\nabla _{X_{2}}(\cdot )\right\Vert _{L^{2}(\Omega )^{N-q}}^{2}\right) ^{1/2},
\end{equation*}%
thanks to Poincar\'{e}'s inequality 
\begin{equation}
\left\Vert v\right\Vert _{L^{2}(\Omega )}\leq C_{\omega _{2}}\left\Vert
\nabla _{X_{2}}v\right\Vert _{L^{2}(\Omega )^{N-q}},~\text{for any}~v\in
H_{0}^{1}(\Omega ;\omega _{2}).  \label{sob}
\end{equation}%
The space $H_{0}^{1}(\Omega )$ will be normed by $\left\Vert \nabla (\cdot
)\right\Vert _{L^{2}(\Omega )^{N}}$. One can check immediately that the
embedding $H_{0}^{1}(\Omega )\hookrightarrow H_{0}^{1}(\Omega ,\omega _{2})$
is continuous. The weak formulation of the limit problem (\ref{u-limit})-(\ref{u-limit-bord}) is given by 
\begin{equation}
\left\{ 
\begin{array}{ll}
\begin{array}{c}
\int_{\omega
_{2}}A_{22}\mathbf{(}X_{1},\cdot )\nabla _{X_{2}}u\mathbf{(}X_{1},\cdot
)\cdot \nabla _{X_{2}}\psi dX_{2} 
=\int_{\omega _{2}}f\mathbf{(}X_{1},\cdot )\text{ }\psi dX_{2}\text{, }%
\forall \psi \in H_{0}^{1}(\omega _{2})\text{\ \ \ \ }%
\end{array}
&  \\ 
u\mathbf{(}X_{1},\cdot )\in H_{0}^{1}(\omega _{2})\text{,\ for a.e. }%
X_{1}\in \omega _{1}\text{, \ \ \ \ \ \ \ \ \ \ \ \ \ \ \ \ \ \ \ \ \ \ \ \ \
\ \ \ \ \ \ \ \ \ \ \ \ \ \ \ \ \ \ \ \ \ \ \ \ \ \ \ \ \ \ \ \ \ \ \ \ } & 
\text{ }%
\end{array}%
\right.   
\label{u-limit-weak-bis}
\end{equation}%
where the existence and uniqueness is a consequence of the assumptions (\ref{hypA2}),(\ref{hypA_221}).
Recall that we have $u\in H_{0}^{1}(\Omega ;\omega _{2})$ and $ u_{\epsilon }\rightarrow u\text{ in }H_{0}^{1}(\Omega ;\omega _{2})$ as $\epsilon \rightarrow 0$ \cite{ogabi4}.
Notice that for $\varphi \in H_{0}^{1}(\Omega ;\omega _{2})$, then for a.e $%
X_{1}$ in $\omega _{1}$ we have $\varphi (X_{1},\cdot )\in H_{0}^{1}(\omega
_{2})$, testing with it in (\ref{u-limit-weak-bis}) and integrating over $%
\omega _{1}$ yields 
\begin{equation}
\int_{\Omega }A_{22}\nabla _{X_{2}}u\cdot
\nabla _{X_{2}}\varphi dx=\int_{\Omega }f\text{ }\varphi dx,\text{\ }\forall
\varphi \in H_{0}^{1}(\Omega ;\omega _{2}).  \label{u-limit-weak}
\end{equation}
Notice that (\ref{u-limit-weak}) could be seen as the weak formulation of the limit problem (\ref{u-limit})-(\ref{u-limit-bord}) in the Hilbert space $H_{0}^{1}(\Omega ;\omega _{2})$ (thanks to Poincaré's inequality (\ref{sob})).\\
Finally, for the estimation of the global rate of convergence, let us recall this result proved in our paper \cite{ogabi4} \textcolor{black}{(Theorem 2.3).}
\begin{equation*}
\Vert \nabla_{X_{2}}(u_{\epsilon}-u) \Vert_{L^{2}(\Omega)^{N-q}} \leq C \epsilon, \text{ for } f\in H^1_{0}(\Omega;\omega_1),
\end{equation*}
\textcolor{black}{where $C$ is independent of $\varepsilon$.} Here, $H^1_{0}(\Omega;\omega_1)$ is similarly defined as $H^1_0(\Omega;\omega_2)$. 
The main results of the article are given in two sections:
 \begin{itemize}
\item In the first section, we show a version of the previous \textcolor{black}{estimate} for more general $f$, and 
we show some high order regularity \textcolor{black}{estimates} for the solutions $u_{\epsilon}$ and $u$, for general domains in arbitrary dimension $N \geq 2$.
\item In the second section, we use the results of the first section  to analyse a $Q_{1}$ finite element method for problem (\ref{u-perturbed-weak}) when $\Omega$,  \textcolor{black}{is a square} or a cube . In the case of regular perturbation (i.e. no boundary layers), that is when $f \in H^1_{0} \cap H^2(\Omega)$, we derive a uniform \textcolor{black}{estimate} of the form $$\Vert \nabla_{X_{2}}(u_{\epsilon,h}-u_{\epsilon}) \Vert_{L^{2}(\Omega)^{N-q}} \leq C h^{\frac{1}{3}}.$$
In the case of singular perturbation (i.e. with boundary layer formation), that is when $f \in  H^2(\Omega)$, we derive a uniform \textcolor{black}{estimate} of the form $$\Vert \nabla_{X_{2}}(u_{\epsilon,h}-u_{\epsilon}) \Vert_{L^{2}(\Omega)^{N-q}} \leq C h^{\frac{1}{5}}.$$
\end{itemize}
Notice that throughout this article $C_{{f, A \text{ ... etc.}}}$ denotes a generic positive constant depending only on the objects $f, A \text{ ... etc.}$.
\section{Some results for the continuous problem}\label{sec-continue}
\subsection{Rate of convergence for general data}In this subsection, we suppose that the block $A_{12}$ satisfies the assumption 
\begin{equation}
\partial _{i}a_{ij}\in L^{\infty }(\Omega ),\partial _{j}a_{ij}\in
L^{\infty }(\Omega )\text{ for }i=1,...,q\text{ and }j=q+1,...,N.
\label{hypAd1}
\end{equation}
In \cite{ogabi4} (Theorem 2.3), we have proved the following \textcolor{black}{estimate}.
\begin{thm} \label{thm-tauxconvergence}
\cite{ogabi4} Let $\Omega =\omega _{1}\times \omega _{2}$ where $\omega
_{1}$ and $\omega _{2}$ are two bounded open sets of $%
\mathbb{R}
^{q}$ and $%
\mathbb{R}
^{N-q}$ respectively, with $N>q\geq 1.$ Suppose that $%
A $ satisfies $(\ref{hypA1})$, $(\ref{hypA2})$, $(\ref{A22})$ and $(\ref%
{hypAd1})$. Let $f\in H^1_0(\Omega,\omega_1 )$, then there exists $C_{\l,\Omega,A}>0$ such that: 
\begin{equation}
\label{tauxconvergence}
\forall \epsilon \in (0,1]:\left\Vert \nabla _{X_{2}}(u_{\epsilon
}-u)\right\Vert _{L^{2}(\Omega )^{N-q}}\leq C_{\l,\Omega,A}(\left\Vert \nabla _{X_{1}}f\right\Vert
_{L^{2}(\Omega )^{q}}+\left\Vert f\right\Vert _{L^{2}(\Omega )})\times \epsilon ,
\end{equation}%
where $u_{\epsilon }$ is the unique solution of $(\ref{u-perturbed-weak})$ in $%
H_{0}^{1}(\Omega )$ and $u$ is the unique solution to $(\ref{u-limit-weak})$
in $H_{0}^{1}(\Omega ;\omega _{2}),$  moreover we have $u \in H^1_0(\Omega).$
\end{thm}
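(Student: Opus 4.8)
The plan is to first upgrade the regularity of the limit solution $u$, and then run an energy estimate on the difference $w:=u_\epsilon-u$ in which the \emph{full} anisotropic bilinear form is reconstructed on the coercive side, so that no a priori bound on $\nabla_{X_1}u_\epsilon$ (which would cost a factor $\epsilon^{-1}$) is ever needed.

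First I would establish that $u\in H^1_0(\Omega)$ together with the bounds $\left\Vert\nabla_{X_1}u\right\Vert_{L^2(\Omega)}\le C\left\Vert\nabla_{X_1}f\right\Vert_{L^2(\Omega)}$ and $\left\Vert\partial_{x_i}\partial_{x_j}u\right\Vert_{L^2(\Omega)}\le C(\left\Vert f\right\Vert_{L^2(\Omega)}+\left\Vert\nabla_{X_1}f\right\Vert_{L^2(\Omega)})$ for $i\le q<j$. The decisive point is assumption (\ref{A22}): since $A_{22}$ does not depend on $X_1$, differentiating the limit problem (\ref{u-limit-weak}) in an $X_1$-direction $x_k$ shows that $\partial_{x_k}u$ solves the \emph{same} limit problem with datum $\partial_{x_k}f$. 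Rigorously I would use difference quotients in $X_1$ (equivalently, view $X_1\mapsto u(X_1,\cdot)$ as the fixed solution operator of $-\div_{X_2}(A_{22}\nabla_{X_2}\cdot)$ applied to $X_1\mapsto f(X_1,\cdot)$), and the energy estimate for the limit problem then yields $\partial_{x_k}u\in H_0^1(\Omega;\omega_2)$ with the stated bounds, hence both $\nabla_{X_1}u\in L^2$ and the mixed derivatives $\partial_{x_i}\partial_{x_j}u\in L^2$. Finally, because $f\in H^1_0(\Omega;\omega_1)$ forces $f(X_1,\cdot)=0$ for $X_1\in\partial\omega_1$, the limit problem has vanishing datum there, so $u$ vanishes on $\partial\omega_1\times\omega_2$; combined with $u(X_1,\cdot)\in H^1_0(\omega_2)$ this gives $u\in H^1_0(\Omega)$, the last assertion of the theorem.

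Next I would subtract (\ref{u-limit-weak}), tested against $w=u_\epsilon-u\in H^1_0(\Omega)\hookrightarrow H^1_0(\Omega;\omega_2)$, from (\ref{u-perturbed-weak}) tested against the same $w$. Writing $u_\epsilon=u+w$ and regrouping, the left-hand side becomes the full coercive form $\int_\Omega A_\epsilon\nabla w\cdot\nabla w\,\d x\ge\lambda(\epsilon^2\left\Vert\nabla_{X_1}w\right\Vert_{L^2}^2+\left\Vert\nabla_{X_2}w\right\Vert_{L^2}^2)$, using (\ref{hypA1}) through $A_\epsilon\xi\cdot\xi=A\tilde\xi\cdot\tilde\xi$ with $\tilde\xi=(\epsilon\xi_1,\xi_2)$, while the right-hand side reduces to $R_1+R_2+R_3$, where
$$R_1=-\epsilon^2\!\int_\Omega A_{11}\nabla_{X_1}u\cdot\nabla_{X_1}w\,\d x,\quad R_2=-\epsilon\!\int_\Omega A_{12}\nabla_{X_2}u\cdot\nabla_{X_1}w\,\d x,\quad R_3=-\epsilon\!\int_\Omega A_{21}\nabla_{X_1}u\cdot\nabla_{X_2}w\,\d x.$$
The virtue of the substitution is that every factor $\nabla_{X_1}u_\epsilon$ is now split into the controlled limit part $\nabla_{X_1}u$ (on the right) and the coercive part $\nabla_{X_1}w$ (on the left).

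Then I would bound the three terms. The terms $R_1$ and $R_3$ are routine: by (\ref{hypA2}), Cauchy--Schwarz and Young's inequality, $|R_1|\le\frac{\lambda}{4}\epsilon^2\left\Vert\nabla_{X_1}w\right\Vert_{L^2}^2+C\epsilon^2\left\Vert\nabla_{X_1}u\right\Vert_{L^2}^2$ and $|R_3|\le\frac{\lambda}{4}\left\Vert\nabla_{X_2}w\right\Vert_{L^2}^2+C\epsilon^2\left\Vert\nabla_{X_1}u\right\Vert_{L^2}^2$, both absorbable with $O(\epsilon^2)$ remainders. The hard part is $R_2$: carrying a single power of $\epsilon$ while pairing with $\nabla_{X_1}w$, a direct bound $|R_2|\le C\epsilon\left\Vert\nabla_{X_1}w\right\Vert_{L^2}$ only matches the weak coercivity $\epsilon^2\left\Vert\nabla_{X_1}w\right\Vert^2$ and would give the suboptimal rate $O(\sqrt\epsilon)$. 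To recover the full rate I would integrate $R_2$ by parts in $X_1$, transferring the derivative off $w$ (which vanishes on $\partial\Omega$) onto $A_{12}$ and $u$:
$$R_2=\epsilon\sum_{i=1}^{q}\sum_{j=q+1}^{N}\int_\Omega\big((\partial_{x_i}a_{ij})\,\partial_{x_j}u+a_{ij}\,\partial_{x_i}\partial_{x_j}u\big)\,w\,\d x.$$
Here (\ref{hypAd1}) provides $\partial_{x_i}a_{ij}\in L^\infty(\Omega)$ and the first step provides $\partial_{x_i}\partial_{x_j}u\in L^2(\Omega)$, so $|R_2|\le\epsilon\,C\,(\left\Vert\nabla_{X_2}u\right\Vert_{L^2}+\sum\left\Vert\partial_{x_i}\partial_{x_j}u\right\Vert_{L^2})\left\Vert w\right\Vert_{L^2}$, and Poincaré's inequality (\ref{sob}) converts this into $\epsilon\,C\left\Vert\nabla_{X_2}w\right\Vert_{L^2}$, which Young absorbs with an $O(\epsilon^2)$ remainder. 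Collecting the three estimates yields $\frac{\lambda}{2}\left\Vert\nabla_{X_2}w\right\Vert_{L^2}^2\le C\epsilon^2(\left\Vert\nabla_{X_1}u\right\Vert_{L^2}^2+\left\Vert\nabla_{X_2}u\right\Vert_{L^2}^2+\sum\left\Vert\partial_{x_i}\partial_{x_j}u\right\Vert_{L^2}^2)$, and inserting the first-step bounds on $u$ in terms of $f$ gives exactly $\left\Vert\nabla_{X_2}(u_\epsilon-u)\right\Vert_{L^2}\le C_{\lambda,\Omega,A}(\left\Vert\nabla_{X_1}f\right\Vert_{L^2}+\left\Vert f\right\Vert_{L^2})\,\epsilon$. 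The main obstacle is therefore the single term $R_2$, and the integration-by-parts trick exploiting the mixed regularity of $u$ and (\ref{hypAd1}) is what upgrades $O(\sqrt\epsilon)$ to the sharp $O(\epsilon)$.
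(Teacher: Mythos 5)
Your overall architecture is sound, and it is worth noting that the paper itself never proves Theorem \ref{thm-tauxconvergence}: it is imported from \cite{ogabi4}, so the only in-paper material to compare against is the surrounding machinery. Measured against that, your two steps are exactly the expected ones: your Step 1 (difference quotients in $X_1$, using (\ref{A22}) so that $\partial_{x_k}u$ solves the limit problem with datum $\partial_{x_k}f$) is precisely the content and the proof technique of Lemma \ref{lem-regul2}, and your energy identity is correct --- testing (\ref{u-perturbed-weak}) and (\ref{u-limit-weak}) with $w=u_\epsilon-u$ and substituting $u_\epsilon=u+w$ does cancel the $A_{22}$ terms, coercivity does give $\lambda(\epsilon^2\Vert\nabla_{X_1}w\Vert^2+\Vert\nabla_{X_2}w\Vert^2)$, the bounds on $R_1,R_3$ absorb with $O(\epsilon^2)$ remainders, and the integration by parts on $R_2$ (which is visibly what assumption (\ref{hypAd1}) exists for), followed by Poincar\'e (\ref{sob}) and Young, produces the sharp $O(\epsilon)$ rate with a constant of the stated form. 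Two small remarks: you only use $\partial_i a_{ij}\in L^\infty$ and never $\partial_j a_{ij}$, which is harmless; and your side claim that the direct bound on $R_2$ would yield $O(\sqrt\epsilon)$ is not right (it only yields $O(1)$), but nothing in your argument rests on it.

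The genuine gap is your justification of $u\in H^1_0(\Omega)$, and it is load-bearing rather than a cosmetic conclusion: you need $w=u_\epsilon-u\in H^1_0(\Omega)$ both to make $w$ an admissible test function in (\ref{u-perturbed-weak}) and to kill the boundary terms in the integration by parts that defines your treatment of $R_2$. The argument ``$f\in H^1_0(\Omega;\omega_1)$ forces $f(X_1,\cdot)=0$ for $X_1\in\partial\omega_1$, hence $u$ vanishes on $\partial\omega_1\times\omega_2$, hence $u\in H^1_0(\Omega)$'' is not a proof: the slice problem is only posed for a.e.\ $X_1\in\omega_1$, pointwise values of $f$ or $u$ on the null set $\partial\omega_1\times\omega_2$ are meaningless, and --- more seriously --- the theorem assumes $\omega_1,\omega_2$ are merely bounded open sets, for which ``$H^1$ plus vanishing boundary values in any weak sense'' does \emph{not} imply $H^1_0$ (slit-type domains are the standard obstruction: the zero extension can lie in $H^1(\mathbb{R}^N)$ while the function is far from $H^1_0$). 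A correct argument must exploit the structure you already identified: by (\ref{A22}) the slice solution operator $T:L^2(\omega_2)\to H^1_0(\omega_2)$ is independent of $X_1$ and bounded, so $u(X_1,\cdot)=T(f(X_1,\cdot))$, and one can push an approximation of $f$ by functions compactly supported in the $X_1$-direction through $T$ to approximate $u$ in the full $H^1(\Omega)$ norm; but this still requires either a density statement for $H^1_0(\Omega;\omega_1)$ or a product-domain lemma (compact $X_1$-support plus slice-wise $H^1_0(\omega_2)$ membership implies $H^1_0(\Omega)$), neither of which is free for general open sets. Until that membership is established, the energy identity at the heart of your proof is not justified, so this step must be filled in rather than asserted.
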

When $f$ does not have the $H^1_0$ regularity in the $X_1$ direction i.e. $f \notin H^1_0(\Omega,\omega_1 )$, we will show a rate of convergence of order $O(\epsilon^{s})$, $0<s<1$. The argument is based on the interpolation trick (see for instance \cite{Lions}). In the above reference, Lions shows that every $H^1$ function could be \textcolor{black}{split} into a $H^1_0$-function with a big $H^1$ norm, and a $H^1$-function with a small $L^2$ norm, when the domain $\Omega$ is regular (of class $C^1$, or of class $C^2$ for the decomposition of $H^2$ functions). Here, we will prove some decomposition lemmas for functions with a partial $H^1$ regularity and for domains with a very low regularity.
\begin{deff}\label{def-decomp}
Let $\mathcal{O}$ be a bounded open set of $\mathbb{R}^d$. we say that $\mathcal{O}$ satisfies the decomposition hypothesis if there exist positive constants $c_1$ and $c_2$ such that for any $\delta \in (0,1)$ there exists $\mathcal{O}_{\delta} \subset \subset \mathcal{O}$ open such that 
\begin{equation} \label{D}
\text{\textit{meas}}( \mathcal{O} \textbackslash \mathcal{O}_{\delta}) \leq c_1 \delta  \text{  and } 
\text{dist}(\mathcal{O}_{\delta}, \partial \mathcal{O}) \geq c_2 \delta \tag{D}
\end{equation}
\end{deff}
One can show that polygonal domains, or more generally Lipschitz domains, satisfy the decomposition hypothesis (\ref{D}). That is still true for some non-Lipschitz domains, for example, for the following open set of $\mathbb{R}^2$,  $\mathcal{O}=\{\vert y \vert < x^2, \text{ } 0<x<1 \}$. Notice that there exist bounded open sets which do not satisfy the decomposition propriety, for instance in dimension $d \geq 1$, the open set $\mathbb{B}_d(0,1) \backslash \{\cup_{k=1}^{\infty}\mathbb{S}_{d}(0,\frac{1}{k})\}$, where $\mathbb{B}_d(0,1)$ is the \textcolor{black}{unit} euclidean ball of $\mathbb{R}^d$ and  $\mathbb{S}_{d}(0,\frac{1}{k})$ is the euclidean sphere of $\mathbb{R}^d$ of center $0$ and ray $\frac{1}{k}$,  gives such an example. Now, we have to prove the following

\begin{lem}\label{lem-decomp}
Let $\Omega=\omega_1 \times \omega_2 $ be a bounded open set of $\mathbb{R}^q \times \mathbb{R}^{N-q}$. Let $f \in L^r(\Omega)$ for some $r>2$ such that $\nabla_{X_1} f \in L^2(\Omega)^q$. Suppose that $\omega_1$ satisfies (\ref{D}), then there exist $C_{_{f,c_1,c_2,\omega_2}},C_{_{f,c_1,\omega_2}}>0$ such that for any $\delta \in (0,1)$ there exist $f^1_{\delta} \in H^1_0(\Omega, \omega_1)$ and $f^2_{\delta} \in L^2(\Omega)$ with  $\nabla_{X_1} f^2_{\delta} \in L^2(\Omega)^q$ such that $ f=f^1_{\delta}+f^2_{\delta}$ with:  $$ \Vert \nabla_{X_1} f^1_{\delta} \Vert _{L^2(\Omega)^q} \leq C_{_{f,c_1,c_2,\omega_2}} \delta^{-\frac{1}{2}-\frac{1}{r}} \text{, and } \Vert f^2_{\delta} \Vert _{L^2(\Omega)} \leq C_{_{f,c_1,\omega_2}} \delta^{\frac{1}{2}-\frac{1}{r}}.$$
\textcolor{black}{Here, the positive numbers $c_1$, $c_2$ are the constants of the decomposition of $\Omega$ given in Definition \ref{def-decomp}.}
\end{lem}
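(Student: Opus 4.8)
The plan is to adapt Lions' interpolation trick to the present anisotropic, low-regularity framework by introducing a single cutoff in the $X_1$-variable and splitting $f$ accordingly. For $\delta \in (0,1)$, hypothesis (\ref{D}) furnishes an open set $\omega_{1,\delta} \subset \subset \omega_1$ with $\mathrm{meas}(\omega_1 \setminus \omega_{1,\delta}) \leq c_1 \delta$ and $\mathrm{dist}(\omega_{1,\delta}, \partial \omega_1) \geq c_2\delta$. I would build the cutoff directly from the distance function $d(X_1) = \mathrm{dist}(X_1, \partial\omega_1)$, which is $1$-Lipschitz irrespective of the regularity of $\omega_1$: setting $\chi_\delta(X_1) = \theta(d(X_1)/\delta)$ for a fixed smooth $\theta : [0,\infty) \to [0,1]$ with $\theta \equiv 0$ on $[0, c_2/2]$, $\theta \equiv 1$ on $[c_2, \infty)$ and $\vert \theta' \vert \leq C/c_2$, one obtains $\chi_\delta \in W^{1,\infty}(\omega_1)$ with $\vert \nabla_{X_1} \chi_\delta \vert \leq C_{c_2}\, \delta^{-1}$, vanishing on the neighborhood $\{d < c_2\delta/2\}$ of $\partial\omega_1$, and equal to $1$ on $\{d \geq c_2\delta\} \supset \omega_{1,\delta}$. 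The key consequence is that both $\{\chi_\delta \neq 1\}$ and $\mathrm{supp}(\nabla_{X_1}\chi_\delta)$ are contained, in the $X_1$-variable, in $\{d < c_2\delta\} \cap \omega_1 \subset \omega_1 \setminus \omega_{1,\delta}$, a set of measure at most $c_1\delta$.

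With this cutoff I would set $f^1_{\delta} = \chi_\delta f$ and $f^2_{\delta} = (1 - \chi_\delta) f$, so that $f = f^1_{\delta} + f^2_{\delta}$ trivially. Since $r > 2$ and $\Omega$ is bounded, $f \in L^2(\Omega)$, hence both pieces lie in $L^2(\Omega)$; by Fubini, $\nabla_{X_1} f \in L^2(\Omega)^q$ gives $f(\cdot, X_2) \in H^1(\omega_1)$ for a.e. $X_2 \in \omega_2$, and the Leibniz rule yields $\nabla_{X_1} f^i_{\delta} \in L^2(\Omega)^q$ for $i = 1,2$. For the membership $f^1_{\delta} \in H_0^1(\Omega, \omega_1)$ I would argue slicewise: for a.e. $X_2$, the function $\chi_\delta f(\cdot, X_2)$ belongs to $H^1(\omega_1)$ and vanishes on a neighborhood of $\partial\omega_1$, hence lies in $H_0^1(\omega_1)$; this is precisely the defining property of $H_0^1(\Omega, \omega_1)$ and avoids any appeal to trace theorems unavailable for such an irregular $\omega_1$.

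It remains to prove the two estimates, which is where the exponent $r > 2$ enters through H\"older's inequality. For $f^2_{\delta}$, using $\vert 1 - \chi_\delta \vert \leq 1$ and its support in the slab of $X_1$-measure $\leq c_1\delta$, H\"older with exponents $r/2$ and $r/(r-2)$ gives $\Vert f^2_{\delta}\Vert_{L^2(\Omega)} \leq \Vert f\Vert_{L^r(\Omega)}\,(c_1 \,\mathrm{meas}(\omega_2))^{1/2 - 1/r}\,\delta^{1/2 - 1/r}$. For $f^1_{\delta}$, the Leibniz rule splits $\nabla_{X_1} f^1_{\delta} = \chi_\delta \nabla_{X_1} f + f\, \nabla_{X_1}\chi_\delta$; the first term is bounded by $\Vert \nabla_{X_1} f\Vert_{L^2(\Omega)}$, which, since $\delta^{-1/2 - 1/r} \geq 1$ on $(0,1)$, is absorbed into the claimed bound, while the second, using $\vert \nabla_{X_1}\chi_\delta \vert \leq C_{c_2}\delta^{-1}$ on a set of measure $\leq c_1\delta$, is estimated by the same H\"older argument to give $C_{c_2}\,\delta^{-1}\,\Vert f\Vert_{L^r(\Omega)}\,(c_1\,\mathrm{meas}(\omega_2))^{1/2 - 1/r}\,\delta^{1/2 - 1/r} = C_{_{f,c_1,c_2,\omega_2}}\,\delta^{-1/2 - 1/r}$, as required. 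The main obstacle is not the measure bookkeeping but the construction of an admissible cutoff with gradient of order $\delta^{-1}$ despite the possibly wild boundary of $\omega_1$; this is exactly what the $1$-Lipschitz distance function resolves, so that hypothesis (\ref{D}) — rather than Lipschitz regularity — is all that the argument needs.
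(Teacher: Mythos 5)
Your proposal is correct and follows essentially the same route as the paper: the same splitting $f^1_\delta=\chi_\delta f$, $f^2_\delta=(1-\chi_\delta)f$ with a cutoff at scale $\delta$ adapted to hypothesis (\ref{D}), the same H\"older argument with exponents $r/2$ and $r/(r-2)$ over the exceptional set of measure $\lesssim c_1\delta\,|\omega_2|$, and the same $O(\delta^{-1})$ gradient bound. The only (inessential) difference is the construction of the cutoff: the paper mollifies the indicator of a $\tfrac{c_2\delta}{3}$-neighborhood of $\omega_1^\delta$ to get a smooth bump in $\mathcal{D}(\omega_1)$, whereas you compose the $1$-Lipschitz distance function with a smooth profile, obtaining a merely Lipschitz cutoff, which suffices for the Leibniz rule and the slicewise compact-support argument giving $f^1_\delta\in H^1_0(\Omega;\omega_1)$.
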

\begin{proof}
Let us recall the space
$$
H_{0}^{1}(\Omega ;\omega _{1})=\left\{\begin{array}{cc}

v\in L^{2}(\Omega )~\text{such that}%
~\nabla _{X_{1}}v\in L^{2}(\Omega )^{q} \\ \text{ and for a.e. }~X_{2}\in
\omega _{2},v(\cdot, X_{2} )\in H_{0}^{1}(\omega _{1})\end{array} \right\},
$$
and the anisotropic Poincaré's inequality 
\begin{equation}
\left\Vert v\right\Vert _{L^{2}(\Omega )}\leq C_{\omega _{1}}\left\Vert
\nabla _{X_{1}}v\right\Vert _{L^{2}(\Omega )^{q}},~\text{for any}~v\in
H_{0}^{1}(\Omega ;\omega _{1}).  \label{sob-bis}
\end{equation}%
Let $\delta \in (0,1)$, since $\omega_1$ satisfies (\ref{D}) then  there exists $\omega_1^{\delta} \subset \subset \omega_1$ such that $$ \text{\textit{meas}}( \omega_1 \textbackslash \omega_1^{\delta}) \leq c_1 \delta  \text{  and } 
\text{dist}(\omega_1^{\delta}, \partial \omega_1) \geq c_2 \delta $$
Let $K_{\delta} = \{ X_1 \in \omega_1 \text{, dist}(X_1,\omega_1^{\delta}) \leq \frac{c_2\delta}{3}\}  $. We define the bump function $\rho_{\delta} \in \mathcal{D}(\omega_1)$ by 
$$\rho_{\delta}(X_1)=\int_{K_{\delta}} \theta_{\frac{c_2\delta}{3}}(X_1-y)dy, $$
where $\theta_{\mu}(\cdot)=\Big(\frac{1}{\mu}\Big)^{q}\theta(\frac{\cdot}{\mu}) $, with $\theta \in \mathcal{D}(\mathbb{R}^q)$ such that $\int_{\mathbb{R}^q} \theta(X_1)dX_1=1$ and $\text{Supp}(\theta) \subset B_{q}(0,1)$. We can check that $$\rho_{\delta}=1 \text{ on }\omega_1^{\delta}, \text{ supp}(\rho_{\delta}) \subset \omega_1 \text{, and }\Vert \rho_{\delta} \Vert_{\infty} \leq 1  \text{, } \Vert \nabla_{X_1}\rho_{\delta} \Vert_{\infty} \leq \frac{3\Vert \nabla_{X_1}\theta \Vert_{\infty}}{c_2\delta}.$$
We define $$ f_{\delta}^1:= \rho_{\delta}f \text{ and } f_{\delta}^2:= f-\rho_{\delta}f.$$
It is clear that $f_{\delta}^1 \in H^1_0(\Omega;\omega_1)$ and $f_{\delta}^2 \in L^2(\Omega) $ with $\nabla_{X_1}f_{\delta}^2 \in L^2(\Omega)^q $. \\
Now, let $1\leq i \leq q$, we have 
\textcolor{black}{$$\Vert \partial_i f^1_{\delta} \Vert_{L^2(\Omega)} \leq \Vert \rho_{\delta}\partial_i f \Vert_{L^2(\Omega)}+\Vert f \partial_i \rho_{\delta} \Vert_{L^2((\omega_1\textbackslash \omega_1^{\delta}) \times \omega_2)}, $$}
therefore
$$\Vert \partial_i f^1_{\delta} \Vert_{L^2(\Omega)} \leq \Vert \partial_i f \Vert_{L^2(\Omega)}+\frac{3\Vert \partial_{i}\theta \Vert_{\infty}}{c_2\delta} \Vert f \Vert_{L^2((\omega_1\textbackslash \omega_1^{\delta}) \times \omega_2)},$$
whence by H\"older inequality we get 
$$\Vert \partial_i f^1_{\delta} \Vert_{L^2(\Omega)} \leq \Vert \partial_i f \Vert_{L^2(\Omega)}+\frac{3\Vert \partial_{i}\theta \Vert_{\infty}}{c_2\delta} \Vert f \Vert_{L^r(\Omega)} \times \Big(c_1\vert \omega_2 \vert \delta \Big)^{\frac{1}{2}-\frac{1}{r}}, $$
and the first inequality of Lemma \ref{lem-decomp} follows. Similarly, we obtain the second inequality of Lemma \ref{lem-decomp}.
\end{proof}
Now, we are able to prove the following theorem
\begin{thm}\label{thm-tauxconvergence-H1}
Assume that $\Omega$ and $A$ are given as in Theorem \ref{thm-tauxconvergence}, assume in addition that $\omega_1$ satisfies (\ref{D}). Suppose that $f \in L^r(\Omega)$ for some $\infty>r>2$ such that $\nabla_{X_1} f\in L^2(\Omega)$, then there exists $C_{\l,f,A,\Omega}>0$ such that 
$$ \forall \epsilon \in (0,1],~\left\Vert \nabla _{X_{2}}(u_{\epsilon
}-u)\right\Vert _{L^{2}(\Omega )^{N-q}}\leq C_{_{\l,f,A,\Omega}} \times \epsilon^{\frac{1}{2}-\frac{1}{r}}.$$
In particular, when $r=\infty$ we have $$ \forall \epsilon \in (0,1],~\left\Vert \nabla _{X_{2}}(u_{\epsilon
}-u)\right\Vert _{L^{2}(\Omega )^{N-q}}\leq C_{_{\l,f,A,\Omega}} \times \epsilon^{\frac{1}{2}}.$$
\end{thm}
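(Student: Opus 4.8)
The plan is to combine the decomposition Lemma \ref{lem-decomp} with the first--order estimate of Theorem \ref{thm-tauxconvergence} and a uniform--in--$\epsilon$ energy bound, and then optimise over the decomposition parameter $\delta$.

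First I would fix $\delta \in (0,1)$ and write $f = f^1_\delta + f^2_\delta$ as provided by Lemma \ref{lem-decomp}, so that $f^1_\delta \in H^1_0(\Omega;\omega_1)$ with $\Vert \nabla_{X_1} f^1_\delta\Vert_{L^2(\Omega)^q} \leq C\delta^{-\frac{1}{2}-\frac{1}{r}}$ and $f^2_\delta \in L^2(\Omega)$ with $\Vert f^2_\delta\Vert_{L^2(\Omega)} \leq C\delta^{\frac{1}{2}-\frac{1}{r}}$. By linearity of the weak formulations (\ref{u-perturbed-weak}) and (\ref{u-limit-weak}) I decompose $u_\epsilon = u_\epsilon^1 + u_\epsilon^2$ and $u = u^1 + u^2$, where the superscript $i$ refers to the solution associated with the datum $f^i_\delta$. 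The error then splits as $\nabla_{X_2}(u_\epsilon - u) = \nabla_{X_2}(u_\epsilon^1 - u^1) + \nabla_{X_2}(u_\epsilon^2 - u^2)$, and I estimate the two contributions by different mechanisms.

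For the first (regular) contribution, since $f^1_\delta \in H^1_0(\Omega;\omega_1)$ I apply Theorem \ref{thm-tauxconvergence} directly; using the anisotropic Poincaré inequality (\ref{sob-bis}) to bound $\Vert f^1_\delta\Vert_{L^2}$ by $\Vert \nabla_{X_1} f^1_\delta\Vert_{L^2}$, this gives $\Vert \nabla_{X_2}(u_\epsilon^1 - u^1)\Vert_{L^2} \leq C \delta^{-\frac{1}{2}-\frac{1}{r}}\epsilon$. For the second contribution I need a bound on $\Vert \nabla_{X_2}(u_\epsilon^2 - u^2)\Vert_{L^2}$ that is uniform in $\epsilon$ and controlled by $\Vert f^2_\delta\Vert_{L^2}$. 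The key observation — and the heart of the argument — is coercivity in the $X_2$--direction alone: writing $\xi = (\xi_1,\xi_2)$, the block structure gives $A_\epsilon \xi \cdot \xi = A(\epsilon\xi_1,\xi_2)\cdot(\epsilon\xi_1,\xi_2) \geq \lambda\vert \xi_2\vert^2$ by (\ref{hypA1}), so testing the perturbed problem with datum $f^2_\delta$ against $u_\epsilon^2$ yields $\lambda\Vert \nabla_{X_2}u_\epsilon^2\Vert_{L^2}^2 \leq \int_\Omega f^2_\delta\, u_\epsilon^2\,dx$. Since $u_\epsilon^2 \in H_0^1(\Omega) \hookrightarrow H_0^1(\Omega;\omega_2)$, Poincaré's inequality (\ref{sob}) bounds $\Vert u_\epsilon^2\Vert_{L^2}$ by $\Vert \nabla_{X_2}u_\epsilon^2\Vert_{L^2}$, giving $\Vert \nabla_{X_2}u_\epsilon^2\Vert_{L^2} \leq C\Vert f^2_\delta\Vert_{L^2}$ with a constant independent of $\epsilon$. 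The identical computation on the limit problem, now using the ellipticity (\ref{hypA_221}) of $A_{22}$, gives the same bound for $\Vert \nabla_{X_2}u^2\Vert_{L^2}$, whence $\Vert \nabla_{X_2}(u_\epsilon^2 - u^2)\Vert_{L^2} \leq C\Vert f^2_\delta\Vert_{L^2} \leq C\delta^{\frac{1}{2}-\frac{1}{r}}$.

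Finally I would combine the two estimates to obtain $\Vert \nabla_{X_2}(u_\epsilon - u)\Vert_{L^2} \leq C(\delta^{-\frac{1}{2}-\frac{1}{r}}\epsilon + \delta^{\frac{1}{2}-\frac{1}{r}})$ for every $\delta \in (0,1)$, and optimise in $\delta$. Since the two exponents $-\frac{1}{2}-\frac{1}{r}$ and $\frac{1}{2}-\frac{1}{r}$ differ by exactly $1$, the balance is achieved at $\delta \sim \epsilon$; choosing $\delta = \epsilon$ (valid for $\epsilon \in (0,1)$, with $\epsilon = 1$ covered by the uniform a priori bound above) makes both terms of order $\epsilon^{\frac{1}{2}-\frac{1}{r}}$, which is the claimed rate. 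The case $r = \infty$ follows by reading the decomposition with $\frac{1}{r}=0$ — replacing the Hölder step in Lemma \ref{lem-decomp} by the direct bound $\Vert f\Vert_{L^2((\omega_1\setminus\omega_1^\delta)\times\omega_2)} \leq \Vert f\Vert_{L^\infty}(c_1\vert \omega_2\vert \delta)^{1/2}$ — which yields $\epsilon^{1/2}$. The only genuinely delicate point is the uniform energy estimate for the $L^2$ part: everything hinges on the degenerate coercivity $A_\epsilon\xi\cdot\xi \geq \lambda\vert \xi_2\vert^2$ surviving the vanishing of $\epsilon$, so that the $X_2$--gradient is controlled by an $\epsilon$--independent constant.
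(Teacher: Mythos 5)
Your proposal is correct and follows essentially the same route as the paper: the same splitting $f = f^1_\delta + f^2_\delta$ from Lemma \ref{lem-decomp}, Theorem \ref{thm-tauxconvergence} plus the anisotropic Poincar\'e inequality (\ref{sob-bis}) for the regular part, the $\epsilon$-uniform energy bound (via the degenerate coercivity $A_\epsilon\xi\cdot\xi \geq \lambda\vert\xi_2\vert^2$ and (\ref{sob})) for the $L^2$ part, and the optimisation $\delta = \epsilon$. The only cosmetic difference is the $r=\infty$ case, which you handle by redoing the H\"older step with the $L^\infty$ bound, while the paper simply lets $r\to\infty$ in the finite-$r$ estimate; both are valid, and your explicit treatment of $\epsilon = 1$ is a small point of added care.
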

\begin{proof}
Let $f=f^1_{\delta}+f^2_{\delta} $ be the decomposition of Lemma \ref{lem-decomp}. Let $u_{\epsilon}^1$ (resp. $u_{\epsilon}^2$) be the solution of (\ref{u-perturbed-weak}) with $f$ replaced by $f_{\delta}^1$ ( resp. $f_{\delta}^2$). The linearity of the problem shows that 
\begin{equation}\label{u-decomp}
u_{\epsilon}=u_{\epsilon}^1+u_{\epsilon}^2.
\end{equation}
Similarly, Let $u^1$ (resp. $u^2$) be the solution of (\ref{u-limit-weak}) with $f$ replaced by $f_{\delta}^1$ ( resp. $f_{\delta}^2$), we also have 
\begin{equation}\label{u-limit-decomp}
u=u^1+u^2.
\end{equation}
Now, according to Theorem \ref{thm-tauxconvergence} we have 
\begin{equation}\label{f^1}
\left\Vert \nabla _{X_{2}}(u_{\epsilon
}^1-u^1)\right\Vert _{L^{2}(\Omega )^{N-q}}\leq C_{_{\l,\Omega,A}}(\left\Vert \nabla _{X_{1}}f_{\delta}^1\right\Vert
_{L^{2}(\Omega )^{q}}+\left\Vert f_{\delta}^1\right\Vert _{L^{2}(\Omega )})\times \epsilon ,
\end{equation}
By using the anisotropic Poincaré's inequality (\ref{sob-bis}), which holds because $f^1_{\delta} \in H^1_0(\Omega, \omega_1)$), we obtain from (\ref{f^1}) 
\begin{equation}
\left\Vert \nabla _{X_{2}}(u_{\epsilon
}^1-u^1)\right\Vert _{L^{2}(\Omega )^{N-q}}\leq C_{_{\l,\Omega,A}} \times \left\Vert \nabla _{X_{1}}f_{\delta}^1\right\Vert
_{L^{2}(\Omega )^{q}}\times \epsilon.
\end{equation}
By applying Lemma \ref{lem-decomp} on the right hand side of the previous inequality we get 
\begin{equation}\label{f^1-bis}
\left\Vert \nabla _{X_{2}}(u_{\epsilon
}^1-u^1)\right\Vert _{L^{2}(\Omega )^{N-q}}\leq C_{_{\l,f,\Omega,A}} \times \delta^{-\frac{1}{2}-\frac{1}{r}}\times \epsilon.
\end{equation}
Now, let us \textcolor{black}{estimate} $\left\Vert \nabla _{X_{2}}(u_{\epsilon
}^2-u^2)\right\Vert _{L^{2}(\Omega )^{N-q}}$. We test by $u_{\epsilon
}^2$ (resp. $u^2$) in the corresponding weak formulation i.e. (\ref{u-perturbed-weak}) with $f$ replaced by $f_{\delta}^2$ ( resp. (\ref{u-limit-weak}) with $f$ replaced by $f_{\delta}^2$), we get 
$$ \left\Vert \nabla _{X_{2}}u_{\epsilon
}^2\right\Vert _{L^{2}(\Omega )^{N-q}} \leq C_{_{\Omega,\l}} \Vert f_{\delta}^2 \Vert_{L^2(\Omega)} \text{ and } \left\Vert \nabla _{X_{2}}u^2\right\Vert _{L^{2}(\Omega )^{N-q}} \leq C_{_{\Omega,\l}} \Vert f_{\delta}^2 \Vert_{L^2(\Omega)}.
$$
Therefore, by the triangle inequality and Lemma \ref{lem-decomp} we get 
\begin{equation}\label{f^2}
\left\Vert \nabla _{X_{2}}(u_{\epsilon
}^2-u^2)\right\Vert _{L^{2}(\Omega )^{N-q}}\leq C_{_{f,\Omega,\l}} \times \delta^{\frac{1}{2}-\frac{1}{r}}
\end{equation}
Finally, by using the decompositions (\ref{u-decomp}), (\ref{u-limit-decomp}), and inequalities (\ref{f^1-bis}), (\ref{f^2}) and the triangle inequality we get 
\begin{equation*}
\left\Vert \nabla _{X_{2}}(u_{\epsilon
}-u)\right\Vert _{L^{2}(\Omega )^{N-q}}\leq C_{_{f,\Omega,A,\l}} \Big( \delta^{-\frac{1}{2}-\frac{1}{r}} \epsilon+\delta^{\frac{1}{2}-\frac{1}{r}}\Big)
\end{equation*}
Notice that this last inequality holds for every $\delta \in (0,1)$ and every $\epsilon \in (0,1]$. Whence, we can choose $\delta=\epsilon$ and we get the expected \textcolor{black}{estimate}. The case $r=\infty$ follows immediately by letting $r \rightarrow \infty$ in the first \textcolor{black}{estimate}.
\end{proof}
Let us finish by this particular case which will be used in the analysis of the numerical scheme. In fact, we have the following analogous of Lemma \ref{lem-decomp} 
\begin{lem}\label{lem-decomp-bis}
Let $\Omega$ be a bounded open set of $\mathbb{R}^N$. Let $f \in H^2(\Omega) \cap L^{\infty}(\Omega)$. Suppose that $\Omega$ satisfies $(\ref{D})$, then the exist $C_{_{f,c_1,\Omega}},C_{_{f,c_1,c_2,\Omega}}>0$ such that for any $\delta \in (0,1)$ there exist $f^1_{\delta} \in H^1_0 \cap H^2(\Omega)$ and $f^2_{\delta} \in H^2(\Omega)$ such that $ f=f^1_{\delta}+f^2_{\delta} $ with: $$ \Vert f^1_{\delta} \Vert _{H^1(\Omega)} \leq C_{_{f,c_1,c_2,\Omega}} \delta^{-1/2},\text{  } \Vert f_{\delta}^1\Vert_{L^2(\Omega)}\leq \Vert f\Vert_{L^2(\Omega)},$$ and $$\Vert f_{\delta}^1 \Vert _{H^2(\Omega)} \leq C_{_{f,c_1,c_2,\Omega}} \delta^{-\frac{3}{2}} \text{ and } \Vert f^2_{\delta} \Vert _{L^2(\Omega)} \leq C_{_{f,c_1,\Omega}} \delta^{1/2}.$$
\end{lem}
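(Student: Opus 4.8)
The plan is to mimic closely the cut-off construction of Lemma \ref{lem-decomp}, now working in the whole domain $\Omega\subset\mathbb{R}^N$ and pushing the estimates up to second order. Since $\Omega$ satisfies (\ref{D}), for each $\delta\in(0,1)$ I take $\Omega_\delta\subset\subset\Omega$ with $\text{meas}(\Omega\setminus\Omega_\delta)\leq c_1\delta$ and $\text{dist}(\Omega_\delta,\partial\Omega)\geq c_2\delta$, and I build a bump function $\rho_\delta\in\mathcal{D}(\Omega)$ exactly as before: mollify the indicator of $K_\delta=\{x\in\Omega:\text{dist}(x,\Omega_\delta)\leq c_2\delta/3\}$ at scale $\mu=c_2\delta/3$. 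This yields $\rho_\delta=1$ on $\Omega_\delta$, $\text{supp}(\rho_\delta)\subset\subset\Omega$, $0\leq\rho_\delta\leq 1$, together with the gradient bound $\Vert\nabla\rho_\delta\Vert_\infty\leq C/\delta$ and, differentiating the mollifier one further time, the second-order bound $\Vert \partial_i\partial_j\rho_\delta\Vert_\infty\leq C/\delta^2$. The decomposition is then $f^1_\delta:=\rho_\delta f$ and $f^2_\delta:=(1-\rho_\delta)f$. Since $\rho_\delta$ is smooth with compact support in $\Omega$ and $f\in H^2(\Omega)$, both pieces lie in $H^2(\Omega)$, and moreover $f^1_\delta\in H^1_0(\Omega)$ because it is compactly supported in $\Omega$.

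The three estimates on $f^1_\delta$ follow from the Leibniz rule, the point being that derivatives landing on $f$ cost nothing, while derivatives landing on $\rho_\delta$ are localized to the thin shell $\Omega\setminus\Omega_\delta$, where I exploit $f\in L^\infty(\Omega)$ in place of the Hölder argument of Lemma \ref{lem-decomp}. The $L^2$ bound is immediate from $\Vert\rho_\delta\Vert_\infty\leq 1$. For the gradient, $\nabla f^1_\delta=\rho_\delta\nabla f+f\nabla\rho_\delta$; the first term is bounded by $\Vert\nabla f\Vert_{L^2}$, and since $\nabla\rho_\delta$ vanishes on $\Omega_\delta$ its support lies in $\Omega\setminus\Omega_\delta$, so $\Vert f\nabla\rho_\delta\Vert_{L^2}\leq(C/\delta)\Vert f\Vert_\infty(c_1\delta)^{1/2}=C\delta^{-1/2}$, which gives the $H^1$ estimate. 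Differentiating once more, for $1\le i,j\le N$ we have $\partial_i\partial_j f^1_\delta=\rho_\delta\,\partial_i\partial_j f+\partial_i\rho_\delta\,\partial_j f+\partial_j\rho_\delta\,\partial_i f+f\,\partial_i\partial_j\rho_\delta$; the first term is controlled by $\Vert f\Vert_{H^2}$, the two cross terms by $(C/\delta)\Vert\nabla f\Vert_{L^2}=O(\delta^{-1})$, and the last, again supported in the shell, by $(C/\delta^2)\Vert f\Vert_\infty(c_1\delta)^{1/2}=C\delta^{-3/2}$, which is the dominant contribution and yields the $H^2$ estimate.

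Finally, $f^2_\delta=(1-\rho_\delta)f$ is supported in $\Omega\setminus\Omega_\delta$ (because $1-\rho_\delta=0$ on $\Omega_\delta$ and $0\le 1-\rho_\delta\le 1$), so $\Vert f^2_\delta\Vert_{L^2}\leq\Vert f\Vert_\infty(\text{meas}(\Omega\setminus\Omega_\delta))^{1/2}\leq C\delta^{1/2}$, as claimed.

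The only genuinely new ingredient compared with Lemma \ref{lem-decomp} is the sharp scaling $\Vert \partial_i\partial_j\rho_\delta\Vert_\infty\leq C/\delta^2$ of the mollified cut-off, so the anticipated main obstacle is simply bookkeeping the correct power of $\delta$ produced by each derivative falling on $\rho_\delta$. Once the shell-localization of $\nabla\rho_\delta$ and of the second derivatives is noted, all remaining work is a routine Leibniz computation combined with the $L^\infty$ bound on $f$.
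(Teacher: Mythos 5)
Your proof is correct and is exactly the argument the paper intends: the paper gives no separate proof of Lemma \ref{lem-decomp-bis}, stating only that it ``is similar to that of Lemma \ref{lem-decomp}'', and your write-up is precisely that adaptation --- the same mollified cut-off $\rho_\delta$, the Leibniz rule, the shell-localization of the derivatives of $\rho_\delta$, with the H\"older step of Lemma \ref{lem-decomp} replaced by the $L^\infty$ bound on $f$ (i.e.\ the case $r=\infty$) and the new scaling $\Vert \partial_i\partial_j\rho_\delta\Vert_\infty \leq C/\delta^2$ supplying the $\delta^{-3/2}$ rate. No gaps; all four stated estimates follow exactly as you computed them.
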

The proof is similar to that of Lemma \ref{lem-decomp}. Here, we can notice that if $\Omega$ is Lipschitz then the assumption $f \in L^{\infty}(\Omega)$, is automatically satisfied in dimension 2 and 3 thanks to Sobolev embeddings.

\subsection{Some Regularity \textcolor{black}{estimates} for the solution of the perturbed problem}
In this subsection, we suppose that $\Omega$ is of the form 
\begin{equation} \label{Omega-cuboid}
\Omega=\prod\limits_{i=1}^{N}(0,l_{i}),
\end{equation}
where $l_{1},...,l_{N}$ are positive real numbers, and $N \geq 2$. We will prove the following 
\begin{thm} \label{theorem-H2-upertubed}
Assume $(\ref{hypA1})$, $(\ref{A-class-C1})$, and $(\ref{Anullbord})$. Let $\Omega$ such that $(\ref{Omega-cuboid})$. Let $f$ such that $(\ref{fL2})$ is satisfied. Let $u_{\epsilon}$ be the solution of $(\ref{u-perturbed-weak})$ then there exists a constant $C_{_{\l,f,A,\Omega}}>0$ such that, for any $\epsilon \in (0,1]$
\begin{equation*}
\epsilon^{2}\Vert D^{2}_{X_1}u_{\epsilon} \Vert _{L^{2}(\Omega)^{q^2}}+ \epsilon\Vert D^{2}_{X_1X_2}u_{\epsilon} \Vert _{L^{2}(\Omega)^{q(N-q)}} + \Vert D^{2}_{X_2}u_{\epsilon} \Vert _{L^{2}(\Omega)^{(N-q)^2}} \leq C_{_{\l,f,A,\Omega}}
\end{equation*}
In addition, if $A \in C^1(\bar{\Omega})$ then the strong convergences
$$ \epsilon^2 D^{2}_{X_1}u_{\epsilon} \rightarrow 0, \text{ }D^{2}_{X_1X_2}u_{\epsilon} \rightarrow 0 \text{ and }D^{2}_{X_2}u_{\epsilon} \rightarrow D^{2}_{X_2}u,  $$
hold in $L^{2}(\Omega)^{q^2}$, $L^{2}(\Omega)^{q(N-q)}$ and $L^{2}(\Omega)^{(N-q)^2}$ respectively. 

\end{thm}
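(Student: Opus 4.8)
The plan is to prove the weighted $H^2$ bound by the difference‑quotient method organised around the \emph{scaled} gradient $\mathcal{D}=(\epsilon\nabla_{X_1},\nabla_{X_2})$, and then to obtain the strong convergences by a compactness‑plus‑energy argument.

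First I would record the first‑order uniform estimate. Testing (\ref{u-perturbed-weak}) with $u_{\epsilon}$ and using that $A_{\epsilon}\xi\cdot\xi=A(x)(\epsilon\xi_1,\xi_2)\cdot(\epsilon\xi_1,\xi_2)\ge\lambda(\epsilon^2|\xi_1|^2+|\xi_2|^2)$ — that is, $A_{\epsilon}$ is exactly $A$ read in the scaled variable — together with Poincar\'e's inequality (\ref{sob}), gives $\epsilon\|\nabla_{X_1}u_{\epsilon}\|_{L^2}+\|\nabla_{X_2}u_{\epsilon}\|_{L^2}\le C\|f\|_{L^2}$ uniformly in $\epsilon$. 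The point of $\mathcal{D}$ is that the three quantities to be bounded are precisely $\|\mathcal{D}_i\mathcal{D}_j u_{\epsilon}\|_{L^2}$ summed over all $i,j$, since $\mathcal{D}_i\mathcal{D}_j=\epsilon^2\partial_i\partial_j$ for $i,j\le q$, $=\epsilon\partial_i\partial_j$ when exactly one index is $\le q$, and $=\partial_i\partial_j$ for $i,j>q$. A direct computation moreover shows the equation reads $-\mathcal{D}\cdot(A\mathcal{D}u_{\epsilon})=f$, i.e.\ it is \emph{uniformly} elliptic in the scaled derivatives, with the same constant $\lambda$ and the same bounded Lipschitz coefficients $a_{ij}$.

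Next comes the a priori estimate. Equivalently, rescale $y_i=x_i/\epsilon$ for $i\le q$ and $y_i=x_i$ otherwise; then $v(y):=u_{\epsilon}(x)$ solves $-\nabla_y\cdot(\widetilde A\,\nabla_y v)=\widetilde f$ on the stretched box $\widetilde\Omega_{\epsilon}$, with $\widetilde A(y)=A(\epsilon Y_1,Y_2)$ uniformly elliptic, $\|\nabla_y\widetilde A\|_{L^\infty}\le\|\nabla A\|_{L^\infty}$, off‑diagonal entries still vanishing on $\partial\widetilde\Omega_{\epsilon}$, and $\|\mathcal{D}_i\mathcal{D}_j u_{\epsilon}\|_{L^2(\Omega)}^2=\epsilon^q\|\partial_{y_i}\partial_{y_j}v\|_{L^2(\widetilde\Omega_{\epsilon})}^2$, $\|\widetilde f\|_{L^2(\widetilde\Omega_{\epsilon})}^2=\epsilon^{-q}\|f\|_{L^2(\Omega)}^2$. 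The theorem is thus reduced to an $H^2$ estimate for $v$ whose constant is \emph{uniform as the box stretches}, and this is the main obstacle. I would resolve it using hypothesis (\ref{Anullbord}): since every off‑diagonal coefficient vanishes on each face, the odd reflection of $v$ across a face extends it to a solution of a uniformly elliptic equation whose coefficients stay Lipschitz (the reflected off‑diagonal entries are continuous precisely because they vanish on the face, and reflection preserves the ellipticity constant). Iterating the reflection across all faces turns every boundary, edge and corner estimate into an interior one, for which the Nirenberg difference‑quotient method gives $\|D^2_y v\|_{L^2(B)}\le C(\|\widetilde f\|_{L^2(2B)}+\|v\|_{H^1(2B)})$ on balls $B$ of a fixed radius, with $C$ depending only on $\lambda$, $\|A\|_{W^{1,\infty}}$ and that radius, hence not on $\epsilon$. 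Covering $\widetilde\Omega_{\epsilon}$ by such balls with bounded overlap and summing yields $\|D^2_y v\|_{L^2(\widetilde\Omega_{\epsilon})}^2\le C(\|\widetilde f\|_{L^2}^2+\|\nabla_y v\|_{L^2}^2+\|v\|_{L^2}^2)$; multiplying by $\epsilon^q$ and invoking the first‑order estimate produces the stated weighted bound.

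Finally, for the convergences I would argue by compactness. The uniform bound lets me extract weak limits of $\epsilon^2 D^2_{X_1}u_{\epsilon}$, $\epsilon D^2_{X_1X_2}u_{\epsilon}$ and $D^2_{X_2}u_{\epsilon}$ in the respective $L^2$ spaces; testing against smooth functions and using the known convergence $u_{\epsilon}\to u$ in $H^1_0(\Omega;\omega_2)$ identifies these limits as $0$, $0$ and $D^2_{X_2}u$ (for the first two, two integrations by parts move the derivatives onto the test function, and the prefactors $\epsilon^2,\epsilon$ send the result to $0$ since $u_{\epsilon}$ stays bounded in $L^2$). Uniqueness of the limits makes the whole families converge weakly. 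To upgrade to strong convergence it remains to prove convergence of the norms, and here the additional hypothesis $A\in C^1(\bar\Omega)$ is used to pass to the limit in the second‑order energy identity produced by the difference‑quotient computation: all commutator and lower‑order terms then converge by the established weak limits and the strong $H^1_0(\Omega;\omega_2)$‑convergence, and the identity matches the corresponding one for the limit problem (\ref{u-limit-weak}). Norm convergence together with weak convergence in the Hilbert spaces $L^2$ then yields the three strong convergences; I expect this energy‑matching step to be the most delicate part of the convergence argument.
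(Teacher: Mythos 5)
Your proof of the uniform bound is correct and shares its key mechanism with the paper: hypothesis (\ref{Anullbord}) is used to extend the solution by odd reflection across every face of the box, so that boundary, edge and corner regularity reduce to interior regularity --- this is exactly the content of the paper's Lemma \ref{lem-regularity-prolonge}, carried out on the tripled box $\prod_{i=1}^N(-l_i,2l_i)$. Where you genuinely differ is in the interior ingredient: the paper cites the uniform anisotropic interior $W^{2,2}$ estimate (Corollary 2.3 of \cite{ogabi2}) for the operator $-\text{div}(A_\epsilon\nabla\cdot)$ and concludes by a finite compactness cover of $\bar\Omega$, whereas you rescale $y_i=x_i/\epsilon$ for $i\le q$ to get a uniformly elliptic problem with Lipschitz coefficients on a stretched box, apply the classical Nirenberg interior estimate on balls of fixed radius, and sum over a bounded-overlap cover (the number of balls grows like $\epsilon^{-q}$, which is harmless since squared $L^2$ norms are summed). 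Your scaling bookkeeping (the $\epsilon^q$ Jacobian, $\|\widetilde f\|^2_{L^2}=\epsilon^{-q}\|f\|^2_{L^2}$, preservation of $\lambda$ and of the $W^{1,\infty}$ bound under both the rescaling and the reflection) checks out, so this part buys a self-contained proof of the estimate in place of the citation.

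The convergence half, however, has a genuine gap. Compactness gives you weak convergence of $\epsilon^2D^2_{X_1}u_\epsilon$, $\epsilon D^2_{X_1X_2}u_\epsilon$ and $D^2_{X_2}u_\epsilon$ to $0$, $0$ and $D^2_{X_2}u$, but the upgrade to strong convergence is precisely the step you postpone as ``energy matching,'' and it is not routine: for the first two families weak convergence to $0$ says nothing about their norms, and for the third one needs $\limsup\|D^2_{X_2}u_\epsilon\|_{L^2}\le\|D^2_{X_2}u\|_{L^2}$, which does not follow from the a priori bound. This missing step is exactly what the paper imports wholesale from \cite{ogabi2}: when $A\in C^1(\bar\Omega)$ the reflected matrix $\widetilde A$ satisfies condition (13) of that reference, and its interior $W^{2,2}$ convergence theorem applies to the extended problem; the paper's proof is a one-line reduction to that citable statement, while yours remains an unexecuted sketch whose commutator terms (involving $\nabla A$ against products of first and second derivatives of $u_\epsilon$, only some of which converge strongly) would still have to be controlled. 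One point in your favor: keeping the factor $\epsilon$ on the mixed derivatives is the defensible bookkeeping, since the theorem's literal claim $D^2_{X_1X_2}u_\epsilon\to0$ in $L^2(\Omega)$ cannot hold as written --- indeed $D^2_{X_1X_2}u_\epsilon\to D^2_{X_1X_2}u$ in $\mathcal{D}'(\Omega)$, and already $A=\Id$, $f=\sin(\pi x_1)\sin(\pi x_2)$ gives $D^2_{X_1X_2}u\neq0$ --- so only $\epsilon D^2_{X_1X_2}u_\epsilon\to0$ can be meant there.
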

Here, we used the notation: $D^{2}_{X_1}:=(\partial_{ij}^2)_{1\leq i,j \leq q}$, $D^{2}_{X_2}:=(\partial_{ij}^2)_{1+q\leq i,j \leq N}$, and $D^{2}_{X_1X_2}:=(\partial_{ij}^2)_{1\leq i\leq q,\text{ } q+1\leq j \leq N}$\\ 
In \cite{ogabi2}, we have a local version of this result.
The proof of Theorem \ref{theorem-H2-upertubed} is based on a symmetrization trick and the application of the local result.

We introduce $\widetilde{\Omega}=\prod\limits_{i=1}^{N}(-l_{i},2l_{i})$, and we denote $sub(\widetilde{\Omega})$ the set of all disjoint subsets of $\widetilde{%
\Omega}$ of the form $I=\prod\limits_{i=1}^{N}I_{i}$ where each $I_{i}$ has
one of the forms $(-l_{i},0),$ $(0,l_{i}),$ $(l_{i},2l_{i}).$ For $0 \leq j \leq N$ fixed
we denote $sub(\widetilde{\Omega})_{-1,j}$, $sub(\widetilde{\Omega})_{1,j},$ $sub(%
\tilde{\Omega})_{2,j}$ the subsets of $sub(\widetilde{\Omega})$ such that $I_{j}$
is of the form $(-l_{j},0),$ $(0,l_{j}),$ $(l_{j},2l_{j})$ respectively. It
is clear that these three subsets define a partition of $sub(\widetilde{\Omega}).
$ For $x=(x_{i})_{1\leq i\leq N}\in I$, we denote 
\begin{equation*}
y=(s_{i}(I)+r_{i}(I)x_{i})_{1\leq i\leq N}\in \Omega 
\end{equation*}%
where $r_{i}$ and $s_{i}$, ${1\leq i\leq N}$, are defined on $sub(\widetilde{\Omega})$ by : 
\begin{equation*}
r_{i}(I)=\left\{
\begin{array}{c}
{1\text{ if }I\in sub(\widetilde{\Omega})_{1,i}} \\ 
{-1\text{
else\ \ \ \ \ \ \ \ \ \ \ }}
\end{array}
\right.
\end{equation*}%
\begin{equation*}
s_{i}(I)=\left\{ 
\begin{array}{c}
0\text{ if }I\in sub(\widetilde{\Omega})_{-1,i}\cup sub(\widetilde{\Omega})_{1,i} \\ 
2l_{i}\text{ if }I\in sub(\widetilde{\Omega})_{2,i}\text{ \ \ \ \ \ \ \ \ \ \ \
\ \ \ \ \ }%
\end{array}%
\right. 
\end{equation*}
Now, we define the function $\widetilde{f}\in L^{2}(\widetilde{\Omega})$ by 
$$
\text{ For }x \in I\in sub(\widetilde{\Omega}):\widetilde{f}(x)=\left(
\prod\limits_{i=1}^{N}r_{i}(I)\right) \times f(y)_{1\leq i\leq N}), \\
\text{ and }\widetilde{f} =0\text{ else}.
$$
Similarly, we define the extension $\widetilde{u}_{\epsilon }$ of $u_{\epsilon }$ by:%
$$
\text{ For }x \in I\in sub(\widetilde{\Omega}):\widetilde{u}_{\epsilon }(x)=\left(
\prod\limits_{i=1}^{N}r_{i}(I)\right) \times u_{\epsilon }(y), \\
\text{ and }\widetilde{u}_{\epsilon } =0\text{ else}.
$$
We define $\widetilde{A}=(\widetilde{a_{ij}})$ the extension of $A$ as follows: \
For $x\in \overline{\widetilde{\Omega}}$, there exists $I\in sub(\widetilde{\Omega})$
such that $x\in \overline{I},$ in this case we set 
\begin{equation*}
\widetilde{a_{ij}}(x)=r_{i}(I)r_{j}(I)a_{ij}(y), \text{ } i,j=1,...,N.
\end{equation*}%
Notice that assumption (\ref{Anullbord}) implies that the value of each $\widetilde{A}(x)$ does not \textcolor{black}{ depend on the choice} of $I$ so $\widetilde{A}$ is well-defined. Notice also that (\ref{A-class-C1}) implies that $\widetilde{A}$ is Lipschitz on $\overline{\widetilde{\Omega}}$. Moreover, we can check immediately that $\widetilde{A}$ satisfies the ellipticity assumption (\ref{hypA1}) with the same constant. 
Finally, we define $\widetilde{A}_{\epsilon}$ as we have defined $A_{\epsilon}$ (see the introduction). \\ Under the above notations we have the following Lemma:
\begin{lem} \label{lem-regularity-prolonge}
Suppose that assumptions of Theorem \ref{theorem-H2-upertubed} hold. Let $\widetilde{u}_{\epsilon }$ \textcolor{black}{be} constructed as above, then $\widetilde{u}_{\epsilon }$ is the unique
weak solution in $H_{0}^{1}(\widetilde{\Omega })$ to the elliptic equation 
\begin{equation*}
-\text{div}(\widetilde{A}_{\epsilon }\nabla \widetilde{u}_{\epsilon })=\widetilde{f}%
\text{. }
\end{equation*}
Moreover, we have $\widetilde{u}_{\epsilon } \in H^{2}_{loc}(\widetilde{\Omega})$.
\end{lem}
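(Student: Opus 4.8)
The plan is to reduce everything to the already-known weak formulation $(\ref{u-perturbed-weak})$ for $u_\epsilon$ on $\Omega$ by means of the affine change of variables $x\mapsto y=(s_i(I)+r_i(I)x_i)_{1\le i\le N}$ on each sub-box, and then to deduce the $H^2_{loc}$ regularity by invoking a local result on the enlarged domain.

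First I would check that $\widetilde u_\epsilon\in H^1_0(\widetilde\Omega)$. Since $u_\epsilon\in H^1_0(\Omega)$ has vanishing trace on $\partial\Omega$, the odd reflections encoded by the signs $\prod_i r_i(I)$ glue across the interior interfaces of the sub-boxes without generating any singular (surface) contribution to the weak gradient; this is the standard fact that the odd extension of an $H^1_0$ function across a flat face is again $H^1$, applied successively in each of the $N$ directions. On the outer boundary $\partial\widetilde\Omega$, every face is the reflected image of a face of $\partial\Omega$ on which $u_\epsilon=0$, so $\widetilde u_\epsilon$ has vanishing trace there as well; hence $\widetilde u_\epsilon\in H^1_0(\widetilde\Omega)$.

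The core step is to verify $\int_{\widetilde\Omega}\widetilde A_\epsilon\nabla\widetilde u_\epsilon\cdot\nabla\widetilde\varphi\,dx=\int_{\widetilde\Omega}\widetilde f\,\widetilde\varphi\,dx$, and by density it suffices to take $\widetilde\varphi\in\mathcal D(\widetilde\Omega)$. I would split both integrals over the sub-boxes $I\in sub(\widetilde\Omega)$ and, on each $I$, apply the change of variables to $\Omega$. Writing $\Phi^I(y)=\widetilde\varphi(x^I(y))$ for the pulled-back test function, where $x^I$ is the inverse map sending $y\in\Omega$ to the corresponding point of $I$, and using the transformation rules $\widetilde a_{ij}(x)=r_i(I)r_j(I)a_{ij}(y)$, $\partial_{x_k}\widetilde u_\epsilon=(\prod_i r_i(I))\,r_k\,(\partial_{y_k}u_\epsilon)(y)$ and $\partial_{x_k}\widetilde\varphi=r_k\,\partial_{y_k}\Phi^I$, together with $r_k^2=1$ and unit Jacobian, each box contributes $(\prod_i r_i(I))\int_\Omega A_\epsilon\nabla u_\epsilon\cdot\nabla\Phi^I\,dy$ to the left-hand side and $(\prod_i r_i(I))\int_\Omega f\,\Phi^I\,dy$ to the right-hand side. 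Summing over $I$, both sides collapse onto the single folded function $v:=\sum_{I}(\prod_i r_i(I))\,\Phi^I$, so the left-hand side equals $\int_\Omega A_\epsilon\nabla u_\epsilon\cdot\nabla v\,dy$ and the right-hand side equals $\int_\Omega f\,v\,dy$. The key point, which I expect to be the main obstacle, is to show $v\in H^1_0(\Omega)$: as a finite sum of smooth functions $v\in C^\infty(\overline\Omega)$, and on each face of $\partial\Omega$ the sub-boxes organize, for each choice of the transverse directions, into one pair differing by a single reflection (hence by the sign $\prod_i r_i$) whose members evaluate $\widetilde\varphi$ at the same interface point and therefore cancel, while the remaining box sends that face into $\partial\widetilde\Omega$ where $\widetilde\varphi=0$ by compact support; thus $v$ vanishes on $\partial\Omega$. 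With $v\in H^1_0(\Omega)$ admissible, applying $(\ref{u-perturbed-weak})$ with test function $v$ shows the two sides agree, which by density gives the identity for all $\widetilde\varphi\in H^1_0(\widetilde\Omega)$.

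Uniqueness then follows from the Lax–Milgram theorem on $H^1_0(\widetilde\Omega)$: for fixed $\epsilon$ the matrix $\widetilde A_\epsilon$ inherits ellipticity (with constant $\lambda\epsilon^2$) from assumption $(\ref{hypA1})$ for $\widetilde A$, exactly as $A_\epsilon$ does from $A$, its entries are bounded since $\widetilde A$ is Lipschitz on the compact set $\overline{\widetilde\Omega}$, and Poincaré's inequality holds on the bounded domain $\widetilde\Omega$. Finally, for the $H^2_{loc}$ regularity it suffices to apply the local regularity result of \cite{ogabi2} to the equation $-\text{div}(\widetilde A_\epsilon\nabla\widetilde u_\epsilon)=\widetilde f$, whose coefficients are Lipschitz on $\overline{\widetilde\Omega}$ and whose right-hand side $\widetilde f\in L^2(\widetilde\Omega)$; this yields $\widetilde u_\epsilon\in H^2_{loc}(\widetilde\Omega)$, as claimed.
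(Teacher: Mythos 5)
Your proposal is correct and takes essentially the same route as the paper's own proof: the same box-by-box change of variables, the same key step of showing that the folded test function $\sum_{I}\bigl(\prod_{i} r_{i}(I)\bigr)\Phi^{I}$ belongs to $H_{0}^{1}(\Omega )$ by pairwise cancellation of reflected boxes on each face of $\partial \Omega $, and elliptic regularity for the Lipschitz coefficients $\widetilde{A}_{\epsilon }$ to get $\widetilde{u}_{\epsilon }\in H^{2}_{loc}(\widetilde{\Omega })$. The only cosmetic differences are that you spell out the Lax--Milgram uniqueness and the successive odd-reflection argument for $\widetilde{u}_{\epsilon }\in H_{0}^{1}(\widetilde{\Omega })$, which the paper leaves implicit, and you cite the local result of \cite{ogabi2} where the paper invokes standard interior $H^{2}$ regularity.
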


\begin{proof}
At first, one can check immediately that the restriction of $\widetilde{u}_{\epsilon }$ to
each $I\in Sub(\widetilde{\Omega})$ belongs to $H_{0}^{1}(I)$ and hence $\widetilde{u%
}_{\epsilon }\in H_{0}^{1}(\widetilde{\Omega}),$ and moreover for $1\leq j\leq N$
and $I\in sub(\widetilde{\Omega}),$ we have : 
\begin{equation*}
\text{For a.e. } x\in I:\partial _{j}\widetilde{u}_{\epsilon }(x)=r_{j}(I)\left(
\prod\limits_{i=1}^{N}r_{i}(I)\right) \partial _{j}u_{\epsilon }(y).
\end{equation*}%
Now, let $\varphi \in \mathcal{D}(\widetilde{\Omega})$ we have%
\begin{equation*}
\int_{\widetilde{\Omega}}\widetilde{A}_{\epsilon }\nabla \widetilde{u}_{\epsilon }\cdot
\nabla \varphi dx=\sum_{I\in Sub(\widetilde{\Omega})}\int_{I}\widetilde{A}_{\epsilon
}\nabla \widetilde{u}_{\epsilon }\cdot \nabla \varphi dx,
\end{equation*}%
by a change of variables we get 
\begin{equation}
\int_{\widetilde{\Omega}}\widetilde{A}_{\epsilon }\nabla \widetilde{u}_{\epsilon }\cdot
\nabla \varphi dx=\sum_{I\in Sub(\widetilde{\Omega})}\int_{\Omega }A_{\epsilon
}\nabla u_{\epsilon }\cdot \nabla \widetilde{\varphi}_{I}dx,
\label{variables-change-ch2}
\end{equation}%
where $\widetilde{\varphi}_{I}$ is defined on $\overline{\Omega }$ by 
\begin{equation*}
\widetilde{\varphi}_{I}(x)=\left( \prod\limits_{i=1}^{N}r_{i}(I)\right) \varphi_{I}
((s_{i}(I)+r_{i}(I)x_{i})_{1\leq i \leq N}),
\end{equation*}%
and $\varphi _{I}$ is the restriction of $\varphi $ on $I$. Let us show that 
\begin{equation}
\sum_{I\in Sub(\widetilde{\Omega})}\widetilde{\varphi}_{I}\in H_{0}^{1}(\Omega ).
\label{phi-variable-change-ch2}
\end{equation}%
It is clear that $\sum_{I\in sub(\widetilde{\Omega})}\widetilde{\varphi}_{I}\in
H^{1}(\Omega )\cap C(\bar{\Omega}),$ it is enough to show that it vanishes
on $\partial \Omega .$ So, let $x^{0}=(x_{i}^{0})_{1\leq i\leq N}$ be an element of $%
\partial \Omega ,$ then there exists at least $1\leq j\leq N$ such that $x^{0}_{j}=0$ or $%
x^{0}_{j}=l_{j}.$

1) If $x_{j}^{0}=0:$ For any $I \in sub(\widetilde{\Omega})_{2,j}$, we have $%
y_{j}^{0}=s_{j}(I)+r_{j}(I)x_{j}^{0}=2l_{j}$, then $y^{0}\in \partial \widetilde{%
\Omega}$ therefore $\widetilde{\varphi}_{I}(x)=0.$ Now, for any $I\in sub(\widetilde{%
\Omega})_{-1,j}$, we have $y_{j}^{0}=s_{j}(I)+r_{j}(I)x_{j}^{0}=-x_{j}^{0}=0$, %
and any $I\in sub(\widetilde{\Omega})_{1,j}$, we have : $%
y_{j}^{0}=s_{j}(I)+r_{j}(I)x_{j}^{0}=x_{j}^{0}=0$, notice that there is a
bijection from $sub(\widetilde{\Omega})_{1,j}$ onto $sub(\widetilde{\Omega})_{-1,j}$
defined by : $I\mapsto I^\prime$ such that  $I$ and $%
I^\prime$ have the same intervals except for the $j^{th}$ one we have $%
I_{j}=(0,l_{j})$ and $I_{j}^\prime =(-l_{j},0).$ For such $I$ and $%
I^\prime$ we have $r_{j}(I)=1$ and $ r_{j}%
(I^\prime)=-1$, then $\widetilde{\varphi}_{I}(x^{0})+\widetilde{\varphi}_{I^\prime
}(x^{0})=0.$ Finally, we get $\sum_{I\in Sub(\widetilde{\Omega})}\widetilde{\varphi}%
_{I}(x^{0})=0.$

2) If $x_{j}^{0}=l_{j}:$ For any $I\in sub(\widetilde{\Omega})_{-1,j}$, $\
y_{j}^{0}=s_{j}(I)+r_{j}(I)x_{j}^{0}=-l_{j}$, then $y^{0}\in \partial \widetilde{%
\Omega}$ therefore $\widetilde{\varphi}_{I}(x)=0$. Now, for any $I\in sub(\widetilde{%
\Omega})_{1,j}$, we have $y_{j}^{0}=s_{j}(I)+r_{j}(I)x_{j}^{0}=l_{j}$, and any 
$I\in sub(\widetilde{\Omega})_{2,j}$, we have : $y_{j}^{0}=2l_{j}-l_{j}=l_{j}$,
notice that there is a bijection from $sub(\widetilde{\Omega})_{1,j}$ onto $sub(%
\widetilde{\Omega})_{2,j}$ defined by : $I\longmapsto I^\prime$ such
that $I$ and $I^\prime$ have the same intervals except for the $j^{th}$
one we have $I_{j}=(0,l_{j})$ and $I_{j}^\prime=(l_{j},2l_{j}).$ For such 
$I$ and $I^\prime$ we have $r_{j}(I)=1$ and $ r_{j}%
(I^\prime)=-1$, then  $\widetilde{\varphi}_{I}(x^{0})+\widetilde{\varphi}%
_{I^\prime}(x^{0})=0$. Finally, we get $\sum_{I\in Sub(\widetilde{\Omega})}\widetilde{\varphi}%
_{I}(x^{0})=0$. \\
At the end, (\ref{phi-variable-change-ch2}) follows from the two points above. \\
Now, since $u_{\epsilon }$ is the solution of $(\ref{u-perturbed-weak})
$  then (\ref{variables-change-ch2}) and (\ref%
{phi-variable-change-ch2}) give    
\begin{equation*}
\int_{\widetilde{\Omega}}\widetilde{A}_{\epsilon }\nabla \widetilde{u}_{\epsilon }\cdot
\nabla \varphi dx=\int_{\Omega }f\sum_{I\in Sub(\widetilde{\Omega})}\widetilde{%
\varphi}_{I}dx.
\end{equation*}%
By using another variables change in the second member of the above equality we get
the first affirmation of the Lemma.
Finally, as we have mentioned above, the function $\widetilde{A_{\epsilon}}$ is Lipschitz on $\widetilde{\Omega}$ (thanks to (\ref{A-class-C1})), then the $H^{2}$ interior elliptic regularity gives the second affirmation of the Lemma.
\end{proof}
Now, we can prove Theorem \ref{theorem-H2-upertubed}. 
Let $\omega \subset \subset \Omega$ an open set. According to Corollary 2.3 of \cite{ogabi2} we have, for any $\epsilon \in (0,1]$ 
\begin{equation} \label{H2borne1}
\epsilon^{2}\Vert D^{2}_{X_1}u_{\epsilon} \Vert _{L^{2}(\omega)^{q^2}}+\epsilon\Vert D^{2}_{X_1X_2}u_{\epsilon} \Vert _{L^{2}(\omega)^{q(N-q)}}+ \Vert D^{2}_{X_2}u_{\epsilon} \Vert _{L^{2}(\omega)^{(N-q)^2}} \leq C_{_{\l,f,A,\omega}}.
\end{equation}
Now, let us show the same \textcolor{black}{estimate} near the boundary of $\Omega$. Let $\omega ' \subset \subset \widetilde{\Omega}$, then by using Corollary 2.3 of \cite{ogabi2} ,thanks to Lemma \ref{lem-regularity-prolonge}, we get 
\begin{equation*} 
\epsilon^{2}\Vert D^{2}_{X_1}\widetilde{u}_{\epsilon} \Vert _{L^{2}(\omega')^{q^2}}+\epsilon\Vert D^{2}_{X_1X_2}\widetilde{u}_{\epsilon} \Vert _{L^{2}(\omega')^{q(N-q)}}+ \Vert D^{2}_{X_2}\widetilde{u}_{\epsilon} \Vert _{L^{2}(\omega')^{(N-q)^2}} \leq C_{_{\l,f,A,\omega'}}.
\end{equation*}
Therefore, 
\begin{equation} \label{H2borne2}
\epsilon^{2}\Vert D^{2}_{X_1}u_{\epsilon} \Vert _{L^{2}(\omega ' \cap \Omega)}+\epsilon\Vert D^{2}_{X_1X_2}u_{\epsilon} \Vert _{L^{2}(\omega ' \cap \Omega)}+ \Vert D^{2}_{X_2}u_{\epsilon} \Vert _{L^{2}(\omega ' \cap \Omega)} \leq C_{_{\l,f,A,\omega'}},
\end{equation}
By compacity, we can cover $\bar{\Omega}$ by a finite cover of open subsets of $\omega$-type and $\omega ' \cap \Omega$-type, then we use (\ref{H2borne1})-(\ref{H2borne2}), and the \textcolor{black}{estimates} of the Theorem \ref{theorem-H2-upertubed} follows.\\
 For the convergences of Theorem \ref{theorem-H2-upertubed}, we use the same trick, in fact when $A \in C^1(\bar{\Omega})$, then $\widetilde{A}$ satisfies eq. (13) in \cite{ogabi2}. 

\subsection{$H^{2}$ Regularity of the solution of the limit problem}
In this subsection, we consider a general bounded domain $\Omega =\omega_1 \times \omega_2$. 
\begin{thm} \label{thm-regularityH2-u} 
Let $\Omega =\omega _{1}\times \omega _{2}$ be an open bounded subset of $\R^N$,  where $\omega
_{1}$ and $\omega _{2}$ are two bounded open subsets of $
\mathbb{R}^{q}$ and $\mathbb{R}^{N-q},$ with $N>q\geq 1.$ Let us assume that $\omega_2$ is convex.
Let  $f$ such that $(\ref{fL2})$ is satisfied, and such that $
\nabla_{X_1}f\in L^{2}(\Omega )^q,$ $D_{X_1}^{2}f\in L^{2}(\Omega )^{q^2}$. Assume that  $A_{22}$ satisfies $(\ref{hypA_221}) $ and $(\ref{A22})$. Assume in addition $(\ref{A-class-C1})$ and  let $u$ be the unique solution in $H_{0}^{1}(\Omega ;\omega _{2})$ of $(%
\ref{u-limit-weak})$, then $u\in H^{2}(\Omega )$ and
$$\Vert u \Vert_{H^2(\Omega)} \leq C_{_{\l,A,\Omega}}\Big(\Vert f \Vert_{L^2(\Omega)}+\Vert \nabla_{X_1} \Vert_{L^2(\Omega)^{q}}+\Vert D^2_{X_1} f\Vert_{L^2(\Omega)^{q^2}}\Big).$$
\end{thm}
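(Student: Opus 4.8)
The plan is to exploit the product structure of the limit problem: since $A_{22}$ depends only on $X_2$ (assumption (\ref{A22})), the weak formulation (\ref{u-limit-weak}) decouples into a family of elliptic problems on $\omega_2$ indexed by $X_1$, all governed by the \emph{same} $X_2$-operator. Concretely, for a.e. $X_1 \in \omega_1$ the slice $v := u(X_1,\cdot) \in H_0^1(\omega_2)$ is the weak solution of
\begin{equation*}
-\mathrm{div}_{X_2}(A_{22}\nabla_{X_2} v) = f(X_1,\cdot) \quad \text{in } \omega_2, \qquad v|_{\partial\omega_2} = 0.
\end{equation*}
Let $T \colon L^2(\omega_2) \to H_0^1(\omega_2)$ be the associated solution operator, so $v = T(f(X_1,\cdot))$; by (\ref{A22}) it does \emph{not} depend on $X_1$. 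Since $\omega_2$ is convex and $A_{22}$ is Lipschitz (from (\ref{A-class-C1})), the classical $H^2$-regularity theory on convex domains shows that $T$ maps $L^2(\omega_2)$ boundedly into $H^2(\omega_2)\cap H_0^1(\omega_2)$, with an operator norm $C_{\l,A,\omega_2}$ depending only on $\l$, $\Vert A_{22}\Vert_{W^{1,\infty}}$ and $\omega_2$, and crucially \emph{independent of} $X_1$. Applying this slice-wise and integrating over $\omega_1$ already yields $D^2_{X_2}u \in L^2(\Omega)$ with $\Vert D^2_{X_2}u\Vert_{L^2(\Omega)} \le C\Vert f\Vert_{L^2(\Omega)}$.

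The second step transfers regularity in the $X_1$ directions. For $1\le i\le q$ and small $h$, let $D_i^h$ denote the difference quotient in the $x_i$ direction. Because $T$ is linear and $X_1$-independent, it commutes with $D_i^h$, giving the pointwise identity $D_i^h u(X_1,\cdot) = T\big(D_i^h f(X_1,\cdot)\big)$ on any $\omega_1' \subset\subset \omega_1$. Combining the standard difference-quotient bound $\Vert D_i^h f\Vert_{L^2(\omega_1'\times\omega_2)} \le \Vert \partial_{x_i}f\Vert_{L^2(\Omega)}$ with the boundedness of $T$ into $H^2(\omega_2)$ shows that $\{D_i^h u\}$ is bounded in $L^2(\omega_1';H^2(\omega_2))$ uniformly in $h$ and in $\omega_1'$. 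Passing to the limit $h\to 0$ (using $\nabla_{X_1}f \in L^2(\Omega)$) identifies the weak limit as $\partial_{x_i}u = T(\partial_{x_i}f)$, so that $\partial_{x_i}u \in L^2(\omega_1;H^2(\omega_2)\cap H_0^1(\omega_2))$ with norm controlled by $\Vert \nabla_{X_1}f\Vert_{L^2(\Omega)}$. Since $\partial_{x_i}u \in H_0^1(\Omega;\omega_2)$ with $\nabla_{X_2}\partial_{x_i}u \in L^2(\Omega)$, Poincaré's inequality (\ref{sob}) gives $\nabla_{X_1}u \in L^2(\Omega)$, and the $H^2(\omega_2)$ control yields the mixed block $D^2_{X_1X_2}u \in L^2(\Omega)$.

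Iterating the same commutation argument once more in the $X_1$ variables gives $\partial^2_{x_ix_j}u = T(\partial^2_{x_ix_j}f)$ for $1\le i,j\le q$; since $D^2_{X_1}f \in L^2(\Omega)$, each $\partial^2_{x_ix_j}u$ lies in $H_0^1(\Omega;\omega_2)$, and Poincaré's inequality (\ref{sob}) bounds $\Vert \partial^2_{x_ix_j}u\Vert_{L^2(\Omega)} \le C\Vert \partial^2_{x_ix_j}f\Vert_{L^2(\Omega)}$, i.e.\ $D^2_{X_1}u \in L^2(\Omega)$. Collecting the three second-order blocks $D^2_{X_1}u$, $D^2_{X_1X_2}u$, $D^2_{X_2}u$ together with the lower-order terms ($u \in L^2(\Omega)$ from $H_0^1(\Omega;\omega_2)$, and $\nabla u \in L^2(\Omega)$ from the above) shows $u \in H^2(\Omega)$ and assembles the stated estimate, with the three summands on the right-hand side coming respectively from the $f$, $\nabla_{X_1}f$ and $D^2_{X_1}f$ contributions.

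The main obstacle is the analytic input of the first step: global $H^2$-regularity for the $X_2$-problem on the merely convex (possibly non-smooth) domain $\omega_2$ with variable Lipschitz coefficients $A_{22}$, together with uniformity of the regularity constant across slices. Convexity is precisely what replaces boundary smoothness here, but combining it with non-constant $W^{1,\infty}$ coefficients requires care. The uniformity in $X_1$ is the payoff of assumption (\ref{A22}): as $T$ is one fixed operator, its bound cannot depend on $X_1$. A secondary technical point is the rigorous passage to the limit in the difference quotients near $\partial\omega_1$, where $u$ satisfies no boundary condition; this is handled by the uniform interior bounds above, which are independent of the exhausting subdomains $\omega_1'\subset\subset\omega_1$ and hence pass to all of $\Omega$.
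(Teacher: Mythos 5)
Your proposal is correct, and it follows the same overall skeleton as the paper (slice-wise $H^2$ regularity on the convex domain $\omega_2$ made $X_1$-uniform by assumption (\ref{A22}), then difference quotients in the $X_1$ directions), but the technical machinery differs in both halves. For the block $D^2_{X_2}u$, the paper does not invoke Bochner measurability of $X_1 \mapsto T(f(X_1,\cdot))$: it proves Lemma \ref{lem-regul1} by a three-step density argument (separable data $f_1\otimes f_2$, then finite sums in $L^2(\omega_1)\otimes L^2(\omega_2)$, then general $f\in L^2(\Omega)$ by approximation and weak compactness), precisely to justify that the slice-wise second derivatives assemble into the distributional $D^2_{X_2}u \in L^2(\Omega)$ — the point your phrase ``applying this slice-wise and integrating'' glosses over; your operator framing does handle it, provided you spell out that $u \in L^2(\omega_1;H^2(\omega_2))$ and identify slice derivatives with distributional ones via Fubini. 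For the $X_1$-blocks, the routes genuinely diverge: you obtain $\partial_i u = T(\partial_i f)$ and $\partial^2_{ij}u = T(\partial^2_{ij}f)$ by commuting $T$ with difference quotients, so every block inherits the convexity-based $H^2(\omega_2)$ bound, and membership of $\partial_i u(X_1,\cdot)$ in $H^1_0(\omega_2)$ comes for free from the range of $T$; the paper's Lemmas \ref{lem-regul2} and \ref{lem-regul3} instead run Nirenberg difference quotients directly on the weak formulation (\ref{u-limit-weak-bis}), using only ellipticity (\ref{hypA_221}) and Poincar\'e (\ref{sob}) — no convexity or $W^{1,\infty}$ coefficients are needed for those two lemmas — and then need Mazur's lemma to recover $\partial_i u(X_1,\cdot)\in H^1_0(\omega_2)$. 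What each buys: your version is more compact and conceptually transparent (everything is $\partial^\alpha_{X_1} u = T\partial^\alpha_{X_1} f$), while the paper's energy-method lemmas hold under weaker hypotheses and give elementary constants ($C_{\omega_2}/\lambda$ type) for the mixed and pure $X_1$ blocks, with the convexity assumption quarantined to the single block where it is indispensable.
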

We will proceed in several steps to prove  Theorem \ref{thm-regularityH2-u}. 
In the following lemma we prove that $D^2_{X_2} u $ is a function of $L^2(\Omega)^{N-q}$.
\begin{lem}
\label{lem-regul1} 
Under assumptions of Theorem \ref{thm-regularityH2-u} we have%
\begin{equation*}
D_{X_2}^{2} u\in L^{2}(\Omega )^{(N-q)^2}~\text{ and }~ \| D_{X_2}^2 u \|_{L^2(\Omega)^{(N-q)^2}} \leq  C_{_{\l,A_{22},\omega_2}} \| f \|_{L^2(\Omega)}.
\end{equation*}
\end{lem}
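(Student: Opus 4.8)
The plan is to reduce the global statement to a one-parameter family of $(N-q)$-dimensional elliptic problems posed on the convex slice $\omega_2$, and to apply the $H^2$ regularity theory on convex domains slice by slice. First I would invoke the slicewise weak formulation $(\ref{u-limit-weak-bis})$: since $f\in L^{2}(\Omega)$, Fubini's theorem gives $f(X_1,\cdot)\in L^{2}(\omega_2)$ for a.e.\ $X_1\in\omega_1$, and for a.e.\ such $X_1$ the slice $u(X_1,\cdot)\in H_0^1(\omega_2)$ is the weak solution of
$$-\text{div}_{X_2}\big(A_{22}\nabla_{X_2}u(X_1,\cdot)\big)=f(X_1,\cdot)\ \text{ in }\omega_2,\qquad u(X_1,\cdot)=0\ \text{ on }\partial\omega_2.$$
The decisive point is that, by assumption $(\ref{A22})$, the coefficient $A_{22}$ depends only on $X_2$, so this is one and the same elliptic operator for every value of the parameter $X_1$.

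Next, because $\omega_2$ is convex and $A_{22}\in W^{1,\infty}(\bar{\omega}_2)$ is elliptic (assumptions $(\ref{A-class-C1})$ and $(\ref{hypA_221})$), the standard $H^2$ regularity theory on convex domains applies to each slice problem. Testing the slice formulation with $u(X_1,\cdot)$ and using ellipticity together with Poincar\'e's inequality on $\omega_2$ first yields the energy bound $\Vert\nabla_{X_2}u(X_1,\cdot)\Vert_{L^2(\omega_2)}+\Vert u(X_1,\cdot)\Vert_{L^2(\omega_2)}\leq C_{_{\l,A_{22},\omega_2}}\Vert f(X_1,\cdot)\Vert_{L^2(\omega_2)}$; the convex-domain $H^2$ estimate then upgrades this to
$$\Vert D^2_{X_2}u(X_1,\cdot)\Vert_{L^2(\omega_2)^{(N-q)^2}}\leq C_{_{\l,A_{22},\omega_2}}\Vert f(X_1,\cdot)\Vert_{L^2(\omega_2)},$$
with a constant that is \emph{uniform} in $X_1$ precisely because the operator does not depend on $X_1$.

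The remaining work is measure-theoretic bookkeeping to pass from the slices to $\Omega$. The solution map $T\colon L^2(\omega_2)\to H^2(\omega_2)$ associated with this fixed operator is bounded and linear, and $X_1\mapsto f(X_1,\cdot)$ is a measurable $L^2(\omega_2)$-valued map; hence $X_1\mapsto u(X_1,\cdot)=Tf(X_1,\cdot)$ is measurable with values in $H^2(\omega_2)$, and so is each $X_1\mapsto\partial^2_{ij}u(X_1,\cdot)$ for $q+1\leq i,j\leq N$. Squaring the slice estimate and integrating over $X_1\in\omega_1$ gives $D^2_{X_2}u\in L^2(\Omega)^{(N-q)^2}$ together with the announced bound $\Vert D^2_{X_2}u\Vert_{L^2(\Omega)^{(N-q)^2}}\leq C_{_{\l,A_{22},\omega_2}}\Vert f\Vert_{L^2(\Omega)}$. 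Finally, to confirm that these slicewise second derivatives really are the distributional derivatives of $u$ on $\Omega$, I would take $\varphi\in\mathcal{D}(\Omega)$, compute $\int_\Omega u\,\partial^2_{ij}\varphi\,\d x$ by integrating first in $X_2$ (treating $X_1$ as a parameter, using the slicewise $H^2$ property) and then in $X_1$ by Fubini, which identifies $\partial^2_{ij}u$ with the slice derivative in $\mathcal{D}'(\Omega)$.

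The main obstacle is the uniformity in $X_1$ of the $H^2$ constant: this is exactly where the convexity of $\omega_2$ and, crucially, the hypothesis $(\ref{A22})$ that $A_{22}$ is independent of $X_1$ enter, and it is what makes the slicewise constant integrable (indeed constant) in $X_1$. Without $(\ref{A22})$ one would have a genuinely $X_1$-dependent operator and the regularity constant could degenerate, so verifying that these two hypotheses together produce a single $X_1$-independent bound is the heart of the argument; the measurability and Fubini steps, by contrast, are routine.
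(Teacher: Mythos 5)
Your proof is correct, but it takes a genuinely different route from the paper's. You argue directly slice by slice: since $A_{22}$ depends only on $X_2$ (assumption (\ref{A22})), the solution operator $T\colon L^2(\omega_2)\to H^2(\omega_2)$ of the convex-domain problem is a single bounded linear map, and you obtain $u(X_1,\cdot)=Tf(X_1,\cdot)$ together with Bochner measurability of $X_1\mapsto u(X_1,\cdot)$ via the identification $L^2(\Omega)\cong L^2(\omega_1;L^2(\omega_2))$; integrating the slicewise estimate and identifying the slicewise derivatives with distributional derivatives by Fubini finishes the argument. The paper instead proceeds by approximation: it first treats tensor-product data $f=f_1f_2$, where $u=f_1u_{f_2}$ can be identified explicitly (using density of $H_0^1(\omega_1)\otimes H_0^1(\omega_2)$ in $H_0^1(\Omega;\omega_2)$), extends to finite sums of tensor products by linearity, and only then handles general $f\in L^2(\Omega)$ by density of $L^2(\omega_1)\otimes L^2(\omega_2)$, weak compactness of the bounded sequence $(D^2_{X_2}u_n)$, identification of the weak limit in $\mathcal{D}'(\Omega)$, and lower semicontinuity of the norm. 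The substantive difference is how global measurability of $D^2_{X_2}u$ is secured: you get it abstractly from the measurability of $Tf(\cdot)$ (Bochner-space theory), whereas the paper gets it concretely from the tensor structure and then transfers it to general $f$ by a limit passage. Your route is shorter and avoids the three-step approximation, at the price of invoking vector-valued integration theory; the paper's route is more elementary in its measure-theoretic ingredients and reuses the density results already established in the authors' earlier work. Both rely on exactly the same two analytic pillars — the $H^2$ regularity theorem on the convex domain $\omega_2$ for Lipschitz coefficients, and the $X_1$-independence of the operator guaranteed by (\ref{A22}) — so the constants obtained coincide.
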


\begin{proof}
We proceed in several steps.

\textbf{Step 1.} Let us assume that $f(x) = f_1(X_1) f_2(X_2)$ where $f_1 \in L^2(\omega_1)$ and $f_2 \in L^2(\omega_2)$. Let $ u_{f_2} $ be the unique solution of 
\begin{equation*} 
\left\{ 
\begin{array}{l}
\int_{\omega _{2}}A_{22}\nabla_{X_2}u_{f_2} \cdot  \nabla_{X_2}\varphi_2 d X_{2}=\int_{\omega _{2}}f_2\varphi _{2} d X_{2}\text{, \ }%
\forall \varphi _{2}\in H_{0}^{1}(\omega _{2}), \\
u_{f_2} \in H_0^1(\omega_2).
\end{array}%
\right. ,  
\end{equation*}%
According to assumption (\ref{A-class-C1}) it follows that $A_{22} \in W^{1,\infty}(\bar{\omega_{2}})$,  and since $\omega_2$ is convex we obtain by the elliptic regularity in $\omega_{2}$ that the function $u_{f_2}$ belongs to $H^2(\omega_2)$. We multiply the previous identity by $ f_1(X_1) \varphi_1(X_1) $ where $\varphi_1 \in H^1_0(\omega_1)$ and we integrate over $\omega_1$, then we use assumption (\ref{A22}) to obtain 
$$
\int_{\Omega}A_{22}\nabla_{X_2} (f_1u_{f_2}) \cdot  \nabla_{X_2} ( \varphi_1 \varphi_2) d x=\int_{\Omega}f\varphi_1\varphi _{2} d x \text{, \ }%
\forall (\varphi_1,\varphi _{2})\in H_0^1(\omega_1) \times H_{0}^{1}(\omega _{2}),
$$
which gives by linearity
$$
\int_{\Omega}A_{22}\nabla_{X_2} (f_1u_{f_2}) \cdot  \nabla_{X_2} \varphi d x=\int_{\Omega}f\varphi d x\text{, \ }%
\forall \varphi\in H_0^1(\omega_1) \otimes H_{0}^{1}(\omega _{2}).
$$
Using the fact that $H_0^1(\omega_1) \otimes H_0^1(\omega_2)$ is dense in $H_{0}^{1}(\Omega ;\omega _{2})$ \cite{ogabi4}, we obtain
\begin{equation*}
\left\{ 
\begin{array}{l}
\int_{\Omega}A_{22}\nabla_{X_2} (f_1u_{f_2}) \cdot  \nabla_{X_2} \varphi d x=\int_{\Omega}f\varphi d x\text{, \ }%
\forall \varphi \in H_{0}^{1}(\Omega ;\omega _{2}) \\
f_1 u_{f_2} \in H_{0}^{1}(\Omega ;\omega _{2}).
\end{array}%
\right. 
\end{equation*}%
Consequently we obtain that $$u = f_1 u_{f_2}~\text{a.e. in}~\Omega.$$ 

\textbf{Step 2.} Let us assume that $ f = \sum_{i=1}^m f_{1,i} f_{2,i} \in L^2(\omega_1) \otimes L^2(\omega_2)$ where $(f_{1,i}, f_{2,i}) \in L^2(\omega_1) \times L^2(\omega_2)$ for any $i \in \{1,...,m\}$. By linearity we obtain that $$u = \sum_{i=1}^m f_{1,i} u_{f_{2,i}}~\text{a.e. in}~\Omega.$$
Using this identity and the above step we obtain that $ D^2_{X_2} u \in L^2(\Omega)^{ (N-q)^{2}}$, in particular one has, for a.e. $%
X_{1}\in \omega _{1}$  
\begin{equation*}
u(X_{1},\cdot )\in H^{2}(\omega _{2})\text{,}
\end{equation*}%
Now, from (\ref{u-limit-weak-bis}) the elliptic regularity on $\omega _{2}$ shows that there exists $C_{_{\l,A_{22},\omega_2}}>0 $ independent of $X_1$, such that for a.e. $X_{1}\in \omega _{1}$ :%
$$
\| D^2_{X_2} u(X_1,\cdot) \|_{L^2(\omega_2)^{(N-q)^2}} \le C_{_{\l,A_{22},\omega_2}} \| f(X_{1},\cdot) \|_{L^2(\omega_2)}.
$$
We integrate this identity over $\omega_1$ and we obtain
$$
\| D^2_{X_2} u \|_{L^2(\Omega)^{(N-q)^2}} \le C_{_{\l,A_{22},\omega_2}}  \| f \|_{L^2(\Omega)}.
$$
\textbf{Step 3.} Let $f \in L^2(\Omega)$. There exists a sequence $(f_n)_{n \ge 0}$ of functions of $L^2(\omega_1) \otimes L^2(\omega_2)$ converging to $f$ in $L^2(\Omega)$ as $n$ goes to infinity. 
 Let $u_n$ be the unique solution in $%
H_{0}^{1}(\Omega ;\omega _{2})$ of 
\begin{equation} \label{weakstep3}
\int_{\Omega }A_{22}\nabla_{x_2}u_n \cdot \nabla_{x_2}\varphi \diff x=\int_{\Omega }f_n\varphi \diff x\text{,
\ }\forall \varphi \in H_{0}^{1}(\Omega ;\omega _{2}).  
\end{equation}
Subtracting (\ref{weakstep3}) from (\ref{u-limit-weak}) and taking $\varphi = u_n-u$, then by using ($\ref{hypA_221}$) and ($\ref{sob}$) we obtain that
\begin{equation}\label{Conv1-step3}
u_n \longrightarrow u \text{ in } L^{2}(\Omega) \text{ as } n \rightarrow \infty \text{ strongly}.
\end{equation}
From the previous step we obtain that $ D_{X_2}^2 u_n \in L^2(\Omega)^{(N-q)^2} $ for any $n \ge 0$ and 
\begin{equation}\label{D2X2Un}
\| D^2_{X_2} u_n \|_{L^2(\Omega)^{(N-q)^2}} \le C_{\l,A_{22},\omega_2}  \| f_n \|_{L^2(\Omega)},~\text{for any}~n\ge 0.
\end{equation}
Using the fact that the sequence $(f_n)_{n \ge 0}$ is bounded in $L^2(\Omega)$ we obtain that the sequence $(D^2_{X_2} u_n)_{n \ge 0}$ is bounded in $L^2(\Omega)^{(N-q)^2}$. Therefore, there exists a subsequence still labelled $(D_{X_2}^2 u_n)$ such that for any $i,j \in \llbracket q+1,N \rrbracket$ there exists $u_{ij}^{\infty} \in L^2(\Omega)$ such that 
\begin{equation} \label{Conv2-step3}
 \partial ^2 _{ij} u_{n} \rightharpoonup u_{ij}^{\infty} \text{ as } n \rightarrow \infty \text{, weakly in }L^2(\Omega).
\end{equation}
Now, for $\phi \in \mathcal{D}(\Omega)$ and $i,j \in \llbracket q+1,N \rrbracket$ we have 
$$\int_{\Omega} \partial ^2 _{ij}u_n \phi dx = \int_{\Omega} u_n \partial ^2 _{ij} \phi dx$$
Passing to the limit in the above identity by using (\ref{Conv1-step3}) and (\ref{Conv2-step3}) to obtain 
$$\int_{\Omega} u_{ij}^{\infty} \phi dx = \int_{\Omega} u \partial ^2 _{ij} \phi dx \text{, for any }i,j \in \llbracket q+1,N \rrbracket $$
Therefore, we obtain that  $\partial ^2 _{ij}u=u^{\infty}_{ij} \in L^2(\Omega)$ for any $i,j \in \llbracket q+1,N \rrbracket$. Finally, passing to the limit in (\ref{D2X2Un}) we get 
$$ \| D^2_{X_2} u \|_{L^2(\Omega)^{(N-q)^2}} \leq \liminf  \| D^2_{X_2} u_n \|_{L^2(\Omega)^{(N-q)^2}} \leq C_{\l,A_{22},\omega_2} \lim \| f_n \|_{L^2(\Omega)}=C_{\l,A_{22},\omega_2} \| f \|_{L^2(\Omega)}, $$
 and the Lemma \ref{lem-regul1} follows.
\end{proof}
At the next step we prove in the following lemma that $D^2_{X_1 X_2}u\in L^{2}(\Omega )^{q(N-q)}$
\begin{lem}
\label{lem-regul2}Let $\Omega =\omega _{1}\times \omega _{2}$ be an open bounded subset of $\R^N$,  where $\omega
_{1}$ and $\omega _{2}$ are two bounded open subsets of $
\mathbb{R}^{q}$ and $\mathbb{R}^{N-q},$ with $N>q\geq 1$. Assume that $(\ref{hypA2}) $, and $(\ref{hypA_221}) $ are satisfied. Suppose that $f\in L^{2}(\Omega )$ such that $\nabla_{X_1}f\in
L^{2}(\Omega )^q$.
Then: 
\begin{equation*}
\nabla_{X_1}u \in L^2(\Omega)^q~\text{and}~D^2_{X_1 X_2}u\in L^{2}(\Omega )^{q(N-q)},
\end{equation*}%
with: $$\Vert \nabla_{X_1} u\Vert_{L^2(\Omega)^q} \leq C_{_{\lambda,\omega_2}}\Vert \nabla_{X_1}f\Vert_{L^2(\Omega)^q} \text{ and } \Vert D_{X_1X_2}^2 u\Vert_{L^2(\Omega)^{q(N-q)}} \leq C_{{\lambda,\omega_2}}\Vert \nabla_{X_1}f\Vert_{L^2(\Omega)^q}, $$
and for a.e. $X_{1}\in \omega _{1}:\nabla_{X_1}u(X_{1},\cdot )\in H_{0}^{1}(\omega
_{2})$.
\end{lem}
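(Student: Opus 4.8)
The plan is to differentiate the parametrised limit problem (\ref{u-limit-weak-bis}) in the $X_1$-directions by the method of difference quotients, using that the block $A_{22}$ is independent of $X_1$ (assumption (\ref{A22}), which is in force throughout the enclosing Theorem \ref{thm-regularityH2-u}): this is precisely what makes a translation in any $X_1$-direction leave the operator $-\mathrm{div}_{X_2}(A_{22}\nabla_{X_2}\cdot)$ invariant. Fix $1\le k\le q$, let $e_k$ be the $k$-th canonical vector, and for $h\neq 0$ set $\delta^h_k v(x)=\big(v(x+he_k)-v(x)\big)/h$, defined on $\omega_1^{|h|}\times\omega_2$ with $\omega_1^{|h|}=\{X_1\in\omega_1:\ \mathrm{dist}(X_1,\partial\omega_1)>|h|\}$.

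First I would write (\ref{u-limit-weak-bis}) at $X_1+he_k$ and at $X_1$, subtract and divide by $h$. Because $A_{22}$ does not depend on $X_1$, the slice $w^h(X_1,\cdot):=\delta^h_k u(X_1,\cdot)$ lies in $H_0^1(\omega_2)$ (a difference of $H_0^1(\omega_2)$ slices of $u$) and solves, for a.e. $X_1\in\omega_1^{|h|}$,
$$\int_{\omega_2}A_{22}\nabla_{X_2}w^h(X_1,\cdot)\cdot\nabla_{X_2}\psi\,dX_2=\int_{\omega_2}\delta^h_k f(X_1,\cdot)\,\psi\,dX_2,\qquad\forall\psi\in H_0^1(\omega_2).$$
Testing with $\psi=w^h(X_1,\cdot)$ and using ellipticity (\ref{hypA_221}), Cauchy--Schwarz and the slice-wise Poincaré inequality on $\omega_2$ (the pointwise form of (\ref{sob})) gives
$$\Vert w^h(X_1,\cdot)\Vert_{L^2(\omega_2)}+\Vert\nabla_{X_2}w^h(X_1,\cdot)\Vert_{L^2(\omega_2)^{N-q}}\le C_{\lambda,\omega_2}\,\Vert\delta^h_k f(X_1,\cdot)\Vert_{L^2(\omega_2)}.$$
Squaring and integrating over $X_1\in\omega_1^{|h|}$ yields $\Vert\delta^h_k u\Vert_{L^2}+\Vert\nabla_{X_2}\delta^h_k u\Vert_{L^2}\le C_{\lambda,\omega_2}\Vert\delta^h_k f\Vert_{L^2}$, all norms taken over $\omega_1^{|h|}\times\omega_2$.

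Next, since $\nabla_{X_1}f\in L^2(\Omega)^q$, the standard difference-quotient estimate gives $\Vert\delta^h_k f\Vert_{L^2(\omega_1^{|h|}\times\omega_2)}\le\Vert\partial_k f\Vert_{L^2(\Omega)}$ uniformly in $h$; hence $\delta^h_k u$ and $\nabla_{X_2}\delta^h_k u=\delta^h_k(\nabla_{X_2}u)$ are bounded in $L^2$ by $C_{\lambda,\omega_2}\Vert\partial_k f\Vert_{L^2(\Omega)}$, uniformly in $h$. The directional difference-quotient (Nirenberg) theorem then yields $\partial_k u\in L^2(\Omega)$ and $\nabla_{X_2}\partial_k u=\partial_k\nabla_{X_2}u\in L^2(\Omega)^{N-q}$ with the same bound; summing the squares over $1\le k\le q$ produces the two claimed estimates for $\nabla_{X_1}u$ and $D^2_{X_1X_2}u$. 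To obtain the boundary information $\nabla_{X_1}u(X_1,\cdot)\in H_0^1(\omega_2)$, I would let $h\to 0$ in the weak identity above: $w^h\to\partial_k u$ in $L^2$, $\nabla_{X_2}w^h\rightharpoonup\nabla_{X_2}\partial_k u$ weakly, and $\delta^h_k f\to\partial_k f$ in $L^2$, so $\partial_k u(X_1,\cdot)$ is the unique $H_0^1(\omega_2)$-solution of the limit problem with datum $\partial_k f(X_1,\cdot)$; in particular $\partial_k u(X_1,\cdot)\in H_0^1(\omega_2)$ for a.e. $X_1$.

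I expect the main obstacle to be the complete lack of regularity of $\partial\omega_1$: difference quotients in $X_1$ exist only on the shrunken cylinder $\omega_1^{|h|}\times\omega_2$, so the energy estimates are a priori only interior in the $X_1$-variable. What rescues the argument is that the constant $C_{\lambda,\omega_2}$ in the pointwise energy estimate is uniform both in $h$ and in the position $X_1$ (it depends only on $\lambda$ and the Poincaré constant of $\omega_2$); feeding an exhaustion $\omega_1'\subset\subset\omega_1$ into the difference-quotient theorem with this uniform constant upgrades the interior bounds to global $L^2(\Omega)$ estimates with no hypothesis on $\partial\omega_1$. The one structural input that is genuinely indispensable here is the $X_1$-independence of $A_{22}$: it is what annihilates the commutator term $\mathrm{div}_{X_2}(\delta^h_k A_{22}\,\nabla_{X_2}u)$ that would otherwise appear in the difference-quotient equation and would require Lipschitz control of $A_{22}$ in $X_1$.
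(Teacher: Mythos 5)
Your difference-quotient argument for the two $L^2$ estimates is exactly the paper's own proof of Lemma \ref{lem-regul2}: the same slice-wise identity obtained from (\ref{u-limit-weak-bis}) (valid, as you correctly stress, only because $A_{22}$ is $X_1$-independent, assumption (\ref{A22}), which the lemma's statement omits but which is in force in the enclosing theorem and used tacitly in the paper), the same energy estimate via (\ref{hypA_221}) and the Poincar\'e constant of $\omega_2$, the same integration over a shrunken $\omega_1'\subset\subset\omega_1$, and the same observation that uniformity of the constant in $X_1$ and $h$ upgrades interior bounds to global ones. That part is correct.

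The gap is in your last step, the claim that $\partial_k u(X_1,\cdot)\in H_0^1(\omega_2)$ for a.e. $X_1$. First, the passage to the limit can only be performed in the integrated (product) form of the identity: for a fixed $X_1$ the quantity $\Vert \delta^h_k f(X_1,\cdot)\Vert_{L^2(\omega_2)}$ need not remain bounded as $h\to 0$, so there is no slice-wise convergence to invoke. Second, and more seriously, what the limit yields is only that $\int_{\omega_2}A_{22}\nabla_{X_2}\partial_k u(X_1,\cdot)\cdot\nabla_{X_2}\psi\,dX_2=\int_{\omega_2}\partial_k f(X_1,\cdot)\,\psi\,dX_2$ for all $\psi\in H_0^1(\omega_2)$; this identity does not force $\partial_k u(X_1,\cdot)$ to lie in $H_0^1(\omega_2)$, since it is equally satisfied after adding any $A_{22}$-harmonic function --- it encodes the PDE in $\omega_2$ but not the boundary condition, and the uniqueness you appeal to is uniqueness \emph{within} $H_0^1(\omega_2)$, so inferring membership from uniqueness is circular: membership is precisely what must be proved. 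The paper closes this step by a compactness/convexity argument: the difference quotients form a bounded sequence in the Hilbert space $H_{0}^{1}(\omega _{1}^{\prime }\times \omega _{2};\omega_{2})$, whose elements by definition have slices in $H_0^1(\omega_2)$; after extracting a weakly convergent subsequence, Mazur's lemma produces convex combinations converging strongly, and completeness of that space for the norm $\Vert\nabla_{X_2}(\cdot)\Vert_{L^2}$ gives $\partial_k u\in H_{0}^{1}(\omega _{1}^{\prime }\times \omega _{2};\omega _{2})$, which is the desired statement. Your own strategy can be repaired in the same spirit: let $v$ be the Lax--Milgram solution in $H_{0}^{1}(\omega _{1}^{\prime }\times \omega _{2};\omega _{2})$ of the limit problem with datum $\partial_k f$; the slice-wise stability estimate together with the strong convergence $\delta^h_k f\to\partial_k f$ in $L^2(\omega_1'\times\omega_2)$ gives $\nabla_{X_2}(\delta^h_k u-v)\to 0$ in $L^2$, hence $\delta^h_k u\to v$, and identifying distributional limits yields $\partial_k u=v$. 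But some such argument relative to the space $H_{0}^{1}(\cdot\,;\omega _{2})$ is indispensable; the variational identity alone cannot deliver the boundary condition.
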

\begin{proof}
We use the difference quotient method of Nirenberg (see for instance \cite{Trudinger}). Let $\omega _{1}^{\prime }\subset \subset \omega _{1},$ and  $0<\eta<\text{dist}($ $%
\omega _{1}^{\prime },\partial \omega _{1}).$ Let $i \in \llbracket 1,q \rrbracket $. For a.e $X_{1}\in $ $\omega
_{1}^{\prime },$ we obtain from (\ref{u-limit-weak-bis}) the following identity
\begin{align*}
\int_{\omega _{2}}A_{22}\left( \frac{\nabla_{X_2}u(X_{1}+\eta e_i,\cdot
)-\nabla_{X_2}u(X_{1},\cdot )}{eta}\right) \cdot \nabla_{X_2}\varphi _{2} d X_{2}= ~~~~~~~~~~~~~~\text{~~~~~~~~~~~~~~~~~~~~}\\
 \int_{\omega
_{2}}\left( \frac{f(X_{1}+ \eta e_i,\cdot )-f(X_{1},\cdot )}{\eta}\right) \varphi
_{2} d X_{2}\text{, \ } \\ 
 \forall \varphi _{2}\in H_{0}^{1}(\omega _{2}),
\end{align*}%
where $e_i$ is the $i$-th element of the canonical basis of $\mathbb{R}^N$. 
Testing with $u(X_{1}+\eta e_i,\cdot )-u(X_{1},\cdot )\in H_{0}^{1}(\omega _{2})$
in the above equality and using (\ref{hypA_221}), (\ref{sob}) we obtain%
\begin{equation*}
\int_{\omega _{2}}\left\vert \frac{\nabla_{X_2}u(X_{1}+ \eta e_i,\cdot )-\nabla_{X_2}u(X_{1},\cdot
)}{\eta}\right\vert ^{2} d X_{2}\leq \frac{C_{\omega _{2}}^{2}}{\lambda ^{2}}%
\int_{\omega _{2}}\left\vert \frac{f(X_{1}+ \eta e_i,\cdot )-f(X_{1},\cdot )}{\eta}%
\right\vert ^{2} d X_{2},
\end{equation*}%
where we have used Poincar\'{e}'s inequality (\ref{sob}).  Integrating
over $\omega _{1}^{\prime }$ yields%
\begin{align}
\int_{\omega _{1}^{\prime }\times \omega _{2}}\left\vert \frac{%
\nabla_{X_2}u(X_{1}+\eta e_i,X_{2})-\nabla_{X_2}u(X_{1},X_{2})}{\eta}\right\vert ^{2} d x \leq \text{~~~~~~~~~~~~~~~~~~~~~~~~~~~~~~~~~~~~~~~~} \notag\\
 \frac{%
C_{\omega _{2}}^{2}}{\lambda ^{2}}\int_{\omega _{1}^{\prime }\times \omega
_{2}}\left\vert \frac{f(X_{1}+\eta e_i,X_{2})-f(X_{1},X_{2})}{\eta}\right\vert ^{2} d x
\notag \\
\leq \frac{C_{\omega _{2}}^{2}}{\lambda ^{2}}\left\Vert \partial_{i}f\right\Vert
_{L^{2}(\Omega )}^{2},  \label{est:gradX1X2u}
\end{align}%
and%
\begin{equation}\label{est:gradX1u}
\int_{\omega _{1}^{\prime }\times \omega _{2}}\left\vert \frac{%
u(X_{1}+\eta e_i,X_{2})-u(X_{1},X_{2})}{\eta}\right\vert ^{2} d x\leq \frac{C_{\omega
_{2}}^{4}}{\lambda ^{2}}\left\Vert \partial_{i}f\right\Vert _{L^{2}(\Omega )}^{2}
\end{equation}
The inequalities ($\ref{est:gradX1X2u}$) and ($\ref{est:gradX1u}$) imply that $ D^2_{X_1 X_2} u \in L^2(\Omega)$ and  $\nabla_{X_1} u \in L^2(\Omega)^q$, with $$
\left\Vert D^2_{X_1 X_2}u\right\Vert _{L^{2}(\Omega )^{q(N-q)}}\leq \frac{C_{\omega _{2}}%
}{\lambda }\left\Vert \nabla_{X_1} f \right\Vert _{L^{2}(\Omega )^q} \text{, and }
 \Vert \nabla_{X_1} u  \Vert_{L^2(\Omega)^q} \leq \frac{C_{\omega
_{2}}^{2}}{\lambda}\left\Vert \nabla_{X_1}f\right\Vert _{L^{2}(\Omega )^q}.$$
Finally, let us show that $\nabla _{X_1}u(X_{1},\cdot )\in H_{0}^{1}(\omega _{2})$
for a.e. $X_{1}\in \omega _{1}.$ Let $i \in \llbracket 1,q \rrbracket$, and let $\omega _{1}^{\prime }\subset \subset
\omega _{1},$ be open, and set $\tau _{\eta}u(X_{1},X_{2})=u(X_{1}+\eta e_i,X_{2}),$ for $%
x \in \omega _{1}^{\prime }\times \omega _{2}$, and for $0<\eta<\text{dist}(\omega_1 ',\partial \omega_1)$. Let  $j \in \llbracket q+1,N \rrbracket$, then one can check
that : 
\begin{equation}
\frac{\partial_{j}\tau _{\eta}u-\partial_{j}u}{\eta}\rightarrow \partial_{ij}^{2}u\text{, and }\frac{%
\tau _{\eta}u-u}{\eta}\rightarrow \partial_{i}u\text{ as }\eta\rightarrow 0\text{ in }%
\mathcal{D}^{\prime }(\omega _{1}^{\prime }\times \omega _{2})\text{ }.
\label{convergenceD'}
\end{equation}
Let $(\eta_{n})$ be a sequence such that, for every $n\in 
\mathbb{N}
,$ $0<\eta_{n}<dist($ $\omega _{1}^{\prime },\partial \omega _{1})$ and $%
\eta_{n}\rightarrow 0.$ The sequences $\left( \frac{\tau _{\eta_{n}}u-u}{\eta_{n}}%
\right) $, $\left( \frac{\partial_{j}(\tau _{\eta_{n}}u-u)}{\eta_{n}}\right) $ are
bounded in $L^{2}(\omega _{1}^{\prime }\times \omega _{2})$ thanks to (\ref{est:gradX1u}) and $(\ref{est:gradX1X2u})$, therefore it follows from (\ref{convergenceD'})
that 
\begin{equation*}
\frac{\partial_{j}(\tau _{\eta_{n}}u-u)}{\eta_{n}}\rightharpoonup \partial_{ij}^{2}u\text{ and }%
\frac{\tau _{\eta_{n}}u-u}{\eta_{n}}\rightharpoonup \partial_{i}u\text{ in }L^{2}(\omega
_{1}^{\prime }\times \omega _{2})\text{ weakly.}
\end{equation*}%
Finally, Mazur's Lemma shows that there exists a sequence $(U_{n})$ of convex
combination of $\{\frac{\tau _{\eta_{n}}u-u}{\eta_{n}}\}_{n\in 
\mathbb{N}
}$ such that $\partial_{j}U_{n}\longrightarrow \partial_{j}(\partial_{i}u)=\partial_{ij}^{2}u$ and $%
U_{n}\longrightarrow \partial_{i}u$ as $n\rightarrow\infty$ strongly in $L^{2}(\omega _{1}^{\prime }\times
\omega _{2})$. Now, since $(U_{n})\in H_{0}^{1}(\omega _{1}^{\prime }\times
\omega _{2};\omega _{2})^{%
\mathbb{N}
}$ and the space $H_{0}^{1}(\omega _{1}^{\prime }\times \omega _{2};\omega
_{2})$ is complete with the norm $\left\Vert \nabla_{X_2}(\cdot )\right\Vert
_{L^{2}(\omega _{1}^{\prime }\times \omega _{2})}$ then we deduce that $%
\partial_{i}u\in H_{0}^{1}(\omega _{1}^{\prime }\times \omega _{2};\omega _{2})$
i.e. for a.e. $X_{1}\in \omega _{1}^{\prime }$, $\partial_{i}u(X_{1},\cdot )\in
H_{0}^{1}(\omega _{2})$. Notice that $\omega _{1}$ could be covered by a
countable family of $\omega _{1}^{\prime }\subset \subset \omega _{1}$,
whence for a.e. $X_{1}\in \omega _{1}$, $\partial_{i}u(X_{1},\cdot )\in
H_{0}^{1}(\omega _{2}).$ and finally we obtain that $\nabla _{X_1}u(X_{1},\cdot )\in H_{0}^{1}(\omega _{2})$
for a.e. $X_{1}\in \omega _{1}.$
\end{proof}
We finish by the following lemma
\begin{lem}
\label{lem-regul3}
Let $\Omega =\omega _{1}\times \omega _{2}$ be an open bounded subset of $\R^N$,  where $\omega
_{1}$ and $\omega _{2}$ are two bounded open subsets of $
\mathbb{R}^{q}$ and $\mathbb{R}^{N-q},$ with $N>q\geq 1$. Assume that $(\ref{hypA2}) $, and $(\ref{hypA_221}) $ are satisfied. Let  $f \in L^2(\Omega)$ such that $\nabla_{X_1}f\in
L^{2}(\Omega )^q,$ and $D_{X_1}^{2}f\in L^{2}(\Omega )^{q^2}$, then: 
\begin{equation*}
D_{X_1}^{2}u\in L^{2}(\Omega )^{q^2}\text{ and }\left\Vert D_{X_1}^{2}u\right\Vert
_{L^{2}(\Omega )^{q^2}}\leq \frac{C_{\omega _{2}}^{2}}{\lambda }\left\Vert
D_{X_1}^{2}f\right\Vert _{L^{2}(\Omega )\textcolor{black}{^{q^2}}}.
\end{equation*}
\end{lem}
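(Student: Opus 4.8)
The plan is to exploit the self-similar structure of the problem: I would show that each first derivative $\partial_i u$ ($1\le i\le q$) is itself the weak solution of the limit problem with the source $f$ replaced by $\partial_i f$, and then feed this back into Lemma \ref{lem-regul2}. To set this up, fix $i\in\llbracket 1,q\rrbracket$ and put $v_i:=\partial_i u$. Lemma \ref{lem-regul2} already tells us more than its headline statement: it gives $\partial_i u\in L^2(\Omega)$, $\nabla_{X_2}(\partial_i u)=D^2_{X_1X_2}u\in L^2(\Omega)^{N-q}$, and $\partial_i u(X_1,\cdot)\in H^1_0(\omega_2)$ for a.e. $X_1$. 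Consequently $v_i\in H^1_0(\Omega;\omega_2)$, so it is an admissible "solution" to which the lemma can be reapplied, provided we identify the equation it solves.

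The heart of the argument, and the step I expect to be the main obstacle, is establishing that
\begin{equation*}
\int_\Omega A_{22}\,\nabla_{X_2}v_i\cdot\nabla_{X_2}\varphi\,dx=\int_\Omega \partial_i f\,\varphi\,dx,\qquad\forall\varphi\in H^1_0(\Omega;\omega_2).
\end{equation*}
I would obtain this by the translation trick underlying the difference quotients of Lemma \ref{lem-regul2}: taking $\varphi$ with compact $X_1$-support, replacing $u$ by its shift $\tau_\eta u(X_1,\cdot)=u(X_1+\eta e_i,\cdot)$ in (\ref{u-limit-weak}) and subtracting yields a weak identity for $(\tau_\eta u-u)/\eta$ with right-hand side driven by $(\tau_{-\eta}\varphi-\varphi)/\eta$. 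Letting $\eta\to0$, the left side converges using the weak $L^2$-convergence of $\nabla_{X_2}(\tau_\eta u-u)/\eta$ to $\nabla_{X_2}\partial_i u$ furnished by Lemma \ref{lem-regul2}, while the right side tends to $-\int_\Omega f\,\partial_i\varphi\,dx=\int_\Omega\partial_i f\,\varphi\,dx$ since $\partial_i f\in L^2(\Omega)$; a density argument removes the support restriction on $\varphi$. This manipulation is legitimate precisely because $A_{22}$ depends only on $X_2$: were $A_{22}$ to depend on $X_1$, the subtraction would leave a non-vanishing commutator term $\int_\Omega A_{22}$-difference-quotient$\,\nabla_{X_2}u\cdot\nabla_{X_2}\varphi$, destroying the clean self-similar structure and obstructing the reuse of Lemma \ref{lem-regul2}.

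Once the displayed identity is in hand, I would invoke Lemma \ref{lem-regul2} a second time with $u$ replaced by $v_i$ and $f$ replaced by $\partial_i f$; its hypotheses hold because $\partial_i f\in L^2(\Omega)$ and $\nabla_{X_1}(\partial_i f)=(\partial^2_{ki}f)_{1\le k\le q}\in L^2(\Omega)^q$, the latter being exactly the assumption $D^2_{X_1}f\in L^2(\Omega)^{q^2}$. The lemma then yields $\nabla_{X_1}v_i=(\partial^2_{ki}u)_{1\le k\le q}\in L^2(\Omega)^q$ with $\Vert\nabla_{X_1}(\partial_i u)\Vert_{L^2(\Omega)^q}\le\frac{C_{\omega_2}^2}{\lambda}\Vert\nabla_{X_1}(\partial_i f)\Vert_{L^2(\Omega)^q}$. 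Squaring and summing over $i=1,\dots,q$, and using that $\sum_i\Vert\nabla_{X_1}(\partial_i u)\Vert^2=\Vert D^2_{X_1}u\Vert^2$ and likewise for $f$, gives $D^2_{X_1}u\in L^2(\Omega)^{q^2}$ together with the claimed bound $\Vert D^2_{X_1}u\Vert_{L^2(\Omega)^{q^2}}\le\frac{C_{\omega_2}^2}{\lambda}\Vert D^2_{X_1}f\Vert_{L^2(\Omega)^{q^2}}$ after taking square roots.
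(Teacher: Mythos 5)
Your proposal is correct and rests on the same bootstrap idea as the paper's own proof: show that $\partial_i u$ solves the limit problem with source $\partial_i f$, then extract its $X_1$-derivatives by the machinery of Lemma \ref{lem-regul2}. The differences are in execution, at both steps. For the intermediate identity, the paper avoids translations altogether: it tests (\ref{u-limit-weak}) with $\partial_{x_i}\varphi_1\,\varphi_2$ for $\varphi_1\otimes\varphi_2\in H^1_0(\omega_1)\otimes H^1_0(\omega_2)$ and integrates by parts in $X_1$, which is legitimate because Lemma \ref{lem-regul2} already supplies $\nabla_{X_1}u\in L^2(\Omega)^q$ and $D^2_{X_1X_2}u\in L^2(\Omega)^{q(N-q)}$; this gives the slice-wise equation for $\partial_i u$ directly, with no limiting procedure. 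Your translation-plus-weak-limit derivation reaches the same identity but carries the extra bookkeeping you acknowledge (compact $X_1$-support, weak convergence of the quotients on $\omega_1'\times\omega_2$, then density in $H^1_0(\Omega;\omega_2)$ — the needed density of tensor products is available from the paper's Step 1 of Lemma \ref{lem-regul1}). For the conclusion, the paper re-runs the difference-quotient argument on the new equation rather than citing Lemma \ref{lem-regul2}; your black-box reuse is cleaner and is justified, since you verify $\partial_i u\in H^1_0(\Omega;\omega_2)$ (via the three outputs of Lemma \ref{lem-regul2}), so by uniqueness of the solution of (\ref{u-limit-weak}) it \emph{is} the solution with data $\partial_i f$, and the constant $C_{\omega_2}^2/\lambda$ then comes out exactly as claimed after summing over $i$. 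One point you rightly flag: both your argument and the paper's use that $A_{22}$ depends only on $X_2$, i.e. (\ref{A22}) — the paper's proof writes $A_{22}(X_2)$ even though (\ref{A22}) is not listed among the lemma's hypotheses — so this is an implicit assumption of the statement rather than a defect of your proof.
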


\begin{proof}
Let $\varphi _{1}\otimes \varphi _{2}\in H_{0}^{1}(\omega _{1})\otimes
H_{0}^{1}(\omega _{2})$. Let $i \in \{1,...,q\}$, testing with 
$ \partial_{x_i} \varphi_1 \varphi_2$ 
in (\ref{u-limit-weak}) we obtain 
\begin{equation*}
\int_{\Omega }A_{22} \nabla_{X_2}u  \cdot \nabla_{X_2}\varphi_2 \partial_{x_i} \varphi_1 d x=\int_{\Omega }f \partial_{x_i} \varphi_1\varphi_2  dx.
\end{equation*}%
According to Lemma \ref{lem-regul2} we have $\partial_{ij}^{2}u\in L^{2}(\Omega )$ for any $j \in \llbracket 1+q,N \rrbracket$ 
then, by integration by part we get 
\begin{equation*}
\int_{\Omega }A_{22}(X_{2})\nabla_{X2}\partial_{i}u(X_{1},X_{2})\cdot \varphi
_{1}(X_{1})\nabla_{X_2}\varphi (X_{2})dx=\int_{\Omega }\partial_{i}f(X_{1},X_{2})\varphi
_{1}(X_{1})\varphi (X_{2})dx.
\end{equation*}%
and hence, for a.e. $X_{1}\in \omega _{1}$ \ we obtain 
\begin{equation*}
\int_{\omega _{2}}A_{22}(x_{2})\nabla_{X_2} \partial_i u(x_{1},x_{2})\cdot \nabla_{X_2}\varphi
_{2}dX_{2}=\int_{\omega _{2}}\partial_{i}f(X_{1},X_{2})\varphi _{2}dX_{2}\text{, \ }%
\forall \varphi _{2}\in H_{0}^{1}(\omega _{2}).
\end{equation*}%
Repeating the same method as in proof of Lemma \ref{lem-regul2}. Then, for $i' \in \{1,...,q\}$ and
$\omega _{1}^{\prime }\subset \subset \omega _{1},$ and for a.e.$%
X_{1}\in \omega _{1}^{\prime }$ and for $0<\eta<\text{dist}($ $\omega _{1}^{\prime
},\partial \omega _{1})$ we obtain that
\begin{align*}
\int_{\omega _{2}}\left\vert \nabla_{X_2}\left( \frac{%
\partial_{i}u(X_{1}+ \eta e_{i'},X_{2})-\partial_{i}u(X_{1},X_{2})}{\eta}\right) \right\vert
^{2}dX_{2}\leq \text{~~~~~~~~~~~~~~~~~~~~~~~~~~}\\ \frac{C_{\omega _{2}}^{2}}{\lambda ^{2}}\int_{\omega
_{2}}\left\vert \frac{\partial_{i}f(X_{1}+\eta e_{i'},X_{2})-\partial_{i}f(X_{1},X_{2})}{\eta}%
\right\vert ^{2}dX_{2}.
\end{align*}
By the above lemma we have for a.e.$X_{1}\in \omega _{1}^{\prime
}:\partial_{i}u(X_{1}+\eta e_{i'},\cdot )-\partial_{i}u(X_{1},\cdot )\in H_{0}^{1}(\omega _{2}).$
We integrate over $\omega _{1}^{\prime }$ and we apply (\ref{sob}) to obtain
\begin{align*}
\int_{\omega _{1}^{\prime }\times \omega _{2}}\left\vert \frac{%
\partial_{i} u(X_{1}+\eta e_{i'},X_{2})-\partial_{i} u(X_{1},X_{2})}{\eta}\right\vert ^{2}dx\leq   \text{~~~~~~~~~~~~~~~~~~~~~~~~~~~~~~~~~~~~~~~~~~}\\
\frac{C_{\omega _{2}}^{4}}{\lambda ^{2}}\int_{\omega _{1}^{\prime }\times
\omega _{2}}\left\vert \frac{\partial_{i}f(X_{1}+ \eta e_{i'},X_{2})-\partial_{i}f(x_{1},x_{2})}{\eta}%
\right\vert ^{2}dx.
\end{align*}%
Whence, $\partial_{ii'}^{2}u\in L^{2}(\Omega )$ and $\left\Vert
\partial_{ii'}^{2}u\right\Vert _{L^{2}(\Omega )}\leq \frac{C_{\omega _{2}}^{2}}{%
\lambda }\left\Vert \partial_{ii'}^{2}f\right\Vert _{L^{2}(\Omega )}.$
\end{proof}
In conclusion, Theorem \ref{thm-regularityH2-u} follows from 
Lemmas \ref{lem-regul1}, \ref{lem-regul2}, and \ref{lem-regul3}.

\section{The Analysis of the numerical scheme}
\subsection{Numerical scheme and the main result}

In this section, we assume that $N\in\{2,3\}$ and that the computational domain is  $\Omega = (0,1)^N$. \\
Let $M_i \in \mathbb{N}$, $M_i \ge  2$, $i \in \llbracket 1,N \rrbracket$ and let $\{h_{k_{i}}^{x_i} >0 , k_i = 1, . . . , M_i \}$, for $i \in \llbracket 1,N \rrbracket $. such that 
$$
\sum_{k_{i}=1}^{M_i} h_{k_{i}}^{x_i} =1, \text{ for } i \in \llbracket 1,N \rrbracket,
$$
and let us define the step size $h$ of the discretization by:
$$
h = \max_{i \in \llbracket 1,N \rrbracket}(h_{1}^{x_i},...,h_{M_{i}}^{x_i}).
$$
Let $(x^i_{k_i})_{0 \le k_i \le M_i}$, $i \in \llbracket 1,N \rrbracket$  be the families of real numbers defined by
$$
x^i_{k_i} - x^i_{k_i-1} = h_{k_i}^{x_i}, \text{ for } i \in \llbracket 1,N \rrbracket, \text{ and } k_i =1,..,M_i,
$$
with $x_0^i =0$ for $i \in \llbracket 1,N \rrbracket$.
We define a rectangular
mesh $\mathcal{R}_h=(R_{k_1,...,k_N})_{ \substack{ 1 \le k_i \le M_i \\i \in \llbracket 1,N \rrbracket  }}$ on $\Omega $ by letting 
$$
R_{k_1,...,k_N} = \prod_{{i}=1}^N(x^i_{k_{i}-1},x^i_{k_{i}})
$$
We denote by $\mathcal{S}$ the set of the nodes of the mesh that is
$$
\mathcal{S} = \{ (x^i_{k_i})_{1 \leq i \leq N}, k_i \in \{0,...,M_i\}  \}.
$$
We denote $\mathbb{Q}_{1}(K)$ the space of real polynomials in two variables of partial
degree less or equal to $1$ over $K \subset \R^N$. 
We define the finite dimensional spaces $W_h \subset H^1(\Omega) $ by
\begin{equation*}
W_{h}=\left\{ v\in C(\bar{\Omega})\text{, }v_{|R}\in \mathbb{Q}_{1}(R)~\text{for any}~
R\in \mathcal{R}_{h}\right\}.
\end{equation*}%
As usual at the continuous level or for variational discrete formulation (as in the finite element context), the Dirichlet boundary conditions are incorporated in the definition of the discrete space $V_h \subset H_0^1(\Omega) $ defined by
\begin{equation*}
V_{h}=\left\{ v\in W_{h},\text{ and }v=0\text{ on }\partial \Omega \right\}.
\end{equation*}
Mention that the discrete space  $V_h $ can be written as a tensor product that is 
\begin{equation} \label{Vh-tensor}
V_h= \otimes_{i=1}^N V_{h}^i ,
\end{equation}
where, for $i \in \llbracket 1,N \rrbracket$:
\begin{equation*}
V_{h}^i=\left\{ v\in C([0,1]),~ v_{|(x_{k_{i}-1},x_{k_i})}\in \mathbb{P}_{1}((x_{k_{i}-1},x_{k_i})),~k_i \in \{1,...,M_i\},~
v(0)=v(1)=0\right\},
\end{equation*}%
where $\mathbb{P}_{1}(I)$ is the space of real polynomials in one variable of degree
less or equal to $1$ over $I \subset \R$.
Recall the Sobolev embedding $$H^{2}(\Omega )\hookrightarrow C(\bar{\Omega})$$
which holds in dimension 2 and 3 for Lipschitz domains. We define the classical interpolation operator $$%
I_{h}:H^{2}(\Omega )\longrightarrow W_{h}$$ by 
\begin{equation*}
I_{h}(v)(x)=\sum\limits_{s\in \mathcal{S}}v(x)N_{s}(x)
\end{equation*}%
where $(N_{s})_{s\in \mathcal{S}}$ is the nodal basis. There exists $C_{_N}>0$ such that for any $v \in H^2(\Omega)$ \cite{Suzan}:
\begin{equation}\label{interpolestH1}
\| \nabla (v - I_h(v)) \|_{L^2(\Omega)^N} \le C_{_N} h \Big( \sum_{i=1}^N\| \partial_{i}^2 v \|_{L^2(\Omega)}^2  \Big)^{1/2},
\end{equation}
and 
\begin{equation}\label{interpolestL2}
\| v - I_h(v) \|_{L^2(\Omega)^N} \le C_{_N} h \Big( \sum_{i=1}^N\| \partial_{i} v \|_{L^2(\Omega)}^2  \Big)^{1/2},
\end{equation}
The numerical scheme to approximate problem ($\ref{u-perturbed-weak}$) is 
\begin{equation}
\left\{ 
\begin{array}{l}
\int_{\Omega }A_{\epsilon }\nabla u_{\epsilon ,h}\cdot \nabla
v \diff x=\int_{\Omega } I_h(f) v \diff x, \quad \quad \forall v\in V_{h},%
\text{\ \ \ \ \ \ } \\ 
u_{\epsilon ,h}\in V_{h}.\text{\ \ \ \ \ \ \ \ \ \ \ \ \ \ \ \ \ \ \
\ \ \ \ \ \ \ \ \ \ \ \ \ \ \ \ \ \ \ \ \ \ \ \ \ \ \ \ \ \ \ \ \ \ \ \ \ \
\ \ \ \ \ \ \ \ \ \ \ \ \ \ \ \ \ \ \ \ }%
\end{array}%
\right. ,  \label{1num}
\end{equation}
Now, we are ready to give the main theorem of this section
\begin{thm} \label{theorem-num}
Let $\Omega = (0,1)^N$, with $N \in \{2,3\}$. Assume that $A$ satisfies (\ref{hypA1}), (\ref{A-class-C1}), (\ref{Anullbord}), and (\ref{A22}). Let $f$ such that (\ref{fH2}) is satisfied, then there exists a positive constant $C_{_{\l,f,\Omega,A}}$ independent of $h$ and $\epsilon$ such that 
\begin{equation} \label{est-uniform}
\Vert \nabla_{X_{2}}(u_{\epsilon,h}-u_{\epsilon}) \Vert_{L^{2}(\Omega)^{N-q}} \leq C_{_{\l,f,\Omega,A}} h^{\frac{1}{5}}, 
\end{equation}
where $u_{\epsilon,h}$ and $u_{\epsilon})$ are the solutions of (\ref{1num}) and (\ref{u-perturbed-weak}) respectively. Moreover, if we assume, in addition, that $f \in H^1_0(\Omega)$ then we have 
\begin{equation} \label{est-uniform-H0}
\Vert \nabla_{X_{2}}(u_{\epsilon,h}-u_{\epsilon}) \Vert_{L^{2}(\Omega)^{N-q}} \leq C_{_{\l,f,\Omega,A}} h^{\frac{1}{3}}.
\end{equation}
\end{thm}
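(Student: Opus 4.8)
The plan is to follow the interpolation strategy recalled in the Introduction: combine a classical but $\epsilon$-degenerate bound of the form $Ch/\epsilon^{\alpha}$ with an $\epsilon$-uniform bound of the form $C(\epsilon^{\beta}+h^{\gamma})$, and then optimize in $\epsilon$ to obtain $Ch^{\min(\beta/(\alpha+\beta),\gamma)}$. Throughout I write $a_{\epsilon}(v,w)=\int_{\Omega}A_{\epsilon}\nabla v\cdot\nabla w\,dx$ and $\Vert v\Vert_{a_{\epsilon}}:=a_{\epsilon}(v,v)^{1/2}$. A direct computation from the block structure of $A_{\epsilon}$ together with \eqref{hypA1} and \eqref{hypA2} yields the anisotropic coercivity $a_{\epsilon}(v,v)\ge\lambda(\epsilon^{2}\Vert\nabla_{X_1}v\Vert_{L^2(\Omega)}^{2}+\Vert\nabla_{X_2}v\Vert_{L^2(\Omega)}^{2})$ and a matching upper bound; in particular $\Vert\nabla_{X_2}v\Vert_{L^2(\Omega)}\le\lambda^{-1/2}\Vert v\Vert_{a_{\epsilon}}$, which is how the target norm will be extracted, and $\Vert v\Vert_{a_{\epsilon}}\le C\Vert\nabla v\Vert_{L^2(\Omega)}$ since $\epsilon\le 1$.

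First I would establish the classical bound with $\alpha=2$. Since $f\in H^{2}$, Theorem \ref{theorem-H2-upertubed} gives $u_{\epsilon}\in H^{2}(\Omega)$ with $\Vert D^{2}u_{\epsilon}\Vert_{L^2(\Omega)}\le C_{\l,f,A,\Omega}\,\epsilon^{-2}$, so $I_{h}u_{\epsilon}$ is well defined ($H^2\hookrightarrow C(\bar\Omega)$ in dimension $2,3$). Writing $\tilde u_{\epsilon,h}$ for the Galerkin solution with exact right-hand side $f$, Céa's lemma in the energy norm gives $\Vert u_{\epsilon}-\tilde u_{\epsilon,h}\Vert_{a_{\epsilon}}\le\Vert u_{\epsilon}-I_{h}u_{\epsilon}\Vert_{a_{\epsilon}}\le C\Vert\nabla(u_{\epsilon}-I_{h}u_{\epsilon})\Vert_{L^2(\Omega)}$, and bounding the latter crudely through the full Hessian by \eqref{interpolestH1} yields $\le Ch\epsilon^{-2}$, hence $\Vert\nabla_{X_2}(u_{\epsilon}-\tilde u_{\epsilon,h})\Vert_{L^2(\Omega)}\le Ch\epsilon^{-2}$. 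The consistency error from replacing $f$ by $I_{h}f$ in \eqref{1num} is controlled by testing $a_{\epsilon}(u_{\epsilon,h}-\tilde u_{\epsilon,h},\cdot)$ against itself and using \eqref{interpolestL2} and \eqref{sob}, giving the uniformly small term $Ch\Vert f\Vert_{H^1}$. Altogether $\Vert\nabla_{X_2}(u_{\epsilon,h}-u_{\epsilon})\Vert_{L^2(\Omega)}\le C_{\l,f,A,\Omega}\,h/\epsilon^{2}$.

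The core of the proof is the $\epsilon$-uniform bound, obtained by comparing $u_{\epsilon,h}$ with the limit solution $u$. By Theorem \ref{thm-regularityH2-u} (applicable since $\omega_2=(0,1)^{N-q}$ is convex, $A_{22}$ satisfies \eqref{A22}, and \eqref{A-class-C1} holds) one has $u\in H^{2}(\Omega)$, so $I_{h}u$ is defined. Setting $w=u_{\epsilon,h}-I_{h}u\in V_{h}$, I would expand $a_{\epsilon}(w,w)=\int_{\Omega}I_{h}f\,w-a_{\epsilon}(I_{h}u,w)$, isolate the limit form $\int_{\Omega}A_{22}\nabla_{X_2}I_{h}u\cdot\nabla_{X_2}w$, and use \eqref{u-limit-weak} (valid for $w\in V_{h}\subset H^{1}_{0}(\Omega;\omega_2)$) to produce the consistency terms $\int(I_hf-f)w$ and $\int A_{22}\nabla_{X_2}(I_hu-u)\cdot\nabla_{X_2}w$, each $\le Ch\Vert f\Vert_{H^2}\Vert\nabla_{X_2}w\Vert_{L^2(\Omega)}$ by \eqref{interpolestL2}, \eqref{interpolestH1} and \eqref{sob}. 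The three off-diagonal contributions $\epsilon^{2}\!\int A_{11}\nabla_{X_1}I_hu\cdot\nabla_{X_1}w$, $\epsilon\!\int A_{12}\nabla_{X_2}I_hu\cdot\nabla_{X_1}w$ and $\epsilon\!\int A_{21}\nabla_{X_1}I_hu\cdot\nabla_{X_2}w$ are the delicate point: estimated crudely, the middle one is only $O(1)$ because $\Vert\nabla_{X_1}w\Vert_{L^2(\Omega)}$ is of size $\epsilon^{-1}$, and I expect this cross term to be the main obstacle. The remedy is to split $\nabla_{X_2}I_hu=\nabla_{X_2}u+\nabla_{X_2}(I_hu-u)$: the interpolation remainder is absorbed into $\epsilon^{2}\Vert\nabla_{X_1}w\Vert^2$ by Young's inequality, while on the smooth part one has $A_{12}\nabla_{X_2}u\in H^1$ (by \eqref{A-class-C1} and $D^{2}_{X_1X_2}u\in L^2$ from Lemma \ref{lem-regul2}), so, since $w$ vanishes on $\partial\Omega$, integration by parts in $X_1$ is clean and produces a factor $\epsilon\Vert w\Vert_{L^2(\Omega)}\le C\epsilon\Vert\nabla_{X_2}w\Vert_{L^2(\Omega)}$ via \eqref{sob}. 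The $A_{11}$ term is absorbed into $\epsilon^{2}\Vert\nabla_{X_1}w\Vert^2$ and the $A_{21}$ term into $\Vert\nabla_{X_2}w\Vert^2$, each leaving an $O(\epsilon^{2})$ remainder. Collecting all terms and using coercivity gives $\Vert\nabla_{X_2}w\Vert_{L^2(\Omega)}\le C_{\l,f,A,\Omega}(h+\epsilon)$, whence $\Vert\nabla_{X_2}(u_{\epsilon,h}-u)\Vert_{L^2(\Omega)}\le C_{\l,f,A,\Omega}(h+\epsilon)$ after one triangle inequality with \eqref{interpolestH1}.

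Finally I would insert the continuous convergence rate and combine. In the singular case $f\in H^{2}\subset L^{\infty}$ (Sobolev embedding in dimension $2,3$), so Theorem \ref{thm-tauxconvergence-H1} with $r=\infty$ gives $\Vert\nabla_{X_2}(u-u_{\epsilon})\Vert_{L^2(\Omega)}\le C\epsilon^{1/2}$; if in addition $f\in H^{1}_{0}(\Omega)$ then Theorem \ref{thm-tauxconvergence} furnishes the stronger rate $C\epsilon$. Combined with the uniform bound above through the triangle inequality, this gives the second estimate $\Vert\nabla_{X_2}(u_{\epsilon,h}-u_{\epsilon})\Vert_{L^2(\Omega)}\le C(\epsilon^{\beta}+h)$ with $\beta=\tfrac12$ (resp.\ $\beta=1$). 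Feeding $\alpha=2$ and these values of $\beta$ into the Introduction's optimization $h^{\min(\beta/(\alpha+\beta),\gamma)}$, with $\gamma=1$ and balancing at $\epsilon=h^{2/5}$ (resp.\ $\epsilon=h^{1/3}$), produces exactly the exponents $\tfrac15$ and $\tfrac13$ claimed in \eqref{est-uniform} and \eqref{est-uniform-H0}.
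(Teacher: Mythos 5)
Your first estimate ($Ch/\epsilon^{2}$) and your treatment of the regular case $f\in H^{1}_{0}\cap H^{2}(\Omega)$ are essentially sound; in fact, for the exponent $1/3$ your one-shot energy comparison of $u_{\epsilon,h}$ with $I_h(u)$ (with the integration by parts in $X_1$ to tame the cross term) is a more direct route than the paper's, which passes through the auxiliary discrete problems (\ref{1numlim}), (\ref{1numlim-bis}) and the tensor-product Lemma \ref{lem-Galerkin}. But the proof of the main claim (\ref{est-uniform}) has a genuine gap. Your $\epsilon$-uniform bound rests on the assertion $w=u_{\epsilon,h}-I_h(u)\in V_h$, which requires $u$ to vanish at every boundary node. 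When $f\in H^{2}(\Omega)$ but $f\notin H^{1}_{0}(\Omega)$ --- precisely the case covered by (\ref{est-uniform}) --- the limit solution $u$ only lies in $H^{1}_{0}(\Omega;\omega_2)$: it vanishes on $\omega_1\times\partial\omega_2$ but in general \emph{not} on $\partial\omega_1\times\omega_2$ (this is exactly the boundary layer the paper refers to). Hence $I_h(u)\notin V_h$, you cannot test the Galerkin relation (\ref{1num}) with $w$, and your integration by parts in $X_1$ produces nonvanishing boundary terms on $\partial\omega_1\times\omega_2$. The failure is not cosmetic: the intermediate bound you claim, $\Vert\nabla_{X_2}(u_{\epsilon,h}-u)\Vert_{L^{2}(\Omega)^{N-q}}\le C(h+\epsilon)$, is false in the singular case. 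Take $N=2$, $q=1$, $A=\Id$, $f\equiv 1$: then $u=x_2(1-x_2)/2$ does not vanish at $x_1\in\{0,1\}$, and letting $\epsilon\to0$ with $h$ fixed, $u_{\epsilon,h}$ converges to the solution $u_h\in V_h$ of (\ref{1numlim}); since every $v\in V_h$ satisfies $\partial_{x_2}v=0$ on the edge $\{x_1=0\}$, integrating $|\partial_{x_2}(u_h-u)|^{2}$ over the first column of cells gives $\Vert\partial_{x_2}(u_h-u)\Vert_{L^{2}(\Omega)}\ge c\,h^{1/2}$, incompatible with an $O(h+\epsilon)$ bound.

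What is missing is the mechanism the paper uses to cope with the boundary layer: the decomposition Lemma \ref{lem-decomp-bis}, which writes $f=f^{1}_{\delta}+f^{2}_{\delta}$ with $f^{1}_{\delta}\in H^{1}_{0}\cap H^{2}(\Omega)$, $\Vert f^{1}_{\delta}\Vert_{H^{2}(\Omega)}\le C\delta^{-3/2}$ and $\Vert f^{2}_{\delta}\Vert_{L^{2}(\Omega)}\le C\delta^{1/2}$. One runs the ``clean'' argument (yours, or the paper's Lemmas \ref{lem-step1}--\ref{lem-step3}) on the $f^{1}_{\delta}$-part, whose limit solution does vanish on all of $\partial\Omega$, uses crude energy bounds on the $f^{2}_{\delta}$-part, and optimizes $\delta$ (the paper takes $\delta=h^{2/3}$, resp.\ $\delta=h^{1/2}$); this yields the weaker but correct uniform estimate $C(h^{1/4}+\epsilon^{1/2})$ of Proposition \ref{prop-num2}, which combined with $Ch/\epsilon^{2}$ still gives the exponent $1/5$, since $\min(\beta/(\alpha+\beta),\gamma)=1/5$ for $\alpha=2$, $\beta=1/2$ and any $\gamma\ge 1/4$. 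So your optimization step is consistent and the repair is local --- insert the decomposition before your energy argument --- but as written the singular case is unproven. A minor secondary point: your appeal to ``C\'ea's lemma in the energy norm'' with constant $1$ presupposes a symmetric $A$, which the paper never assumes; one should instead use coercivity and continuity of $a_{\epsilon}$ in the anisotropic norm $\bigl(\epsilon^{2}\Vert\nabla_{X_1}\cdot\Vert^{2}_{L^{2}(\Omega)}+\Vert\nabla_{X_2}\cdot\Vert^{2}_{L^{2}(\Omega)}\bigr)^{1/2}$, which only costs a constant.
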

This theorem follows immediately from \textcolor{black}{estimates} of type (\ref{glob1}) and (\ref{glob2}). Each one of these two \textcolor{black}{estimates} will be the subject of the next subsections.
\subsection{The first \textcolor{black}{estimate} of type (\ref{glob1})} 
In this subsection we prove the following
\begin{prop} \label{prop-num1}
Suppose that $N \in \{2,3\}$. Assume that $A$ satisfies assumptions (\ref{hypA1}), (\ref{A-class-C1}),  and (\ref{Anullbord}). Assume that $f$ satisfies assumption (\ref{fH2}), then there exists $C_{_{\l,f,A,\Omega}}>0$ independent of $\epsilon$ and $h$ such that
\begin{equation*}
\Vert \nabla_{X_2}(u_{\epsilon,h}-u_{\epsilon}) \Vert _{L^2(\Omega)^{N-q}} \leq C_{_{\l,f,A,\Omega}} \frac{h}{\epsilon^2}.
\end{equation*}
\end{prop}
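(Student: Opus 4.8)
The plan is to run a Strang/Céa argument in an $\epsilon$-weighted energy norm, feeding it with the anisotropic $H^2$ bound of Theorem \ref{theorem-H2-upertubed} for the interpolation error and with the anisotropic Poincaré inequality (\ref{sob}) to absorb the consistency error coming from the replacement of $f$ by $I_h(f)$. The structural fact I would record first is that $A_\epsilon = D_\epsilon A D_\epsilon$ with $D_\epsilon = \mathrm{diag}(\epsilon I_q, I_{N-q})$. Introducing the weighted norm $|||v|||^2 := \epsilon^2 \|\nabla_{X_1} v\|_{L^2(\Omega)}^2 + \|\nabla_{X_2} v\|_{L^2(\Omega)}^2$, this identity gives, for the bilinear form $a_\epsilon(u,v)=\int_\Omega A_\epsilon \nabla u \cdot \nabla v\,dx$, both the coercivity $a_\epsilon(v,v) \ge \lambda\, |||v|||^2$ and the continuity $a_\epsilon(u,v) \le \|A\|_\infty\, |||u|||\,|||v|||$, with constants \emph{independent of} $\epsilon$. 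This is the key point that keeps the abstract estimate $\epsilon$-uniform.

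Next I would subtract the scheme (\ref{1num}) from (\ref{u-perturbed-weak}) tested against $v \in V_h \subset H_0^1(\Omega)$, obtaining the error equation $a_\epsilon(u_{\epsilon,h}-u_\epsilon, v) = \int_\Omega (I_h(f)-f)\,v\,dx$. For an arbitrary $v_h \in V_h$, coercivity applied to $u_{\epsilon,h}-v_h$, the error equation (valid since $u_{\epsilon,h}-v_h \in V_h$), and continuity then yield $|||u_{\epsilon,h}-v_h||| \le C\big(|||u_\epsilon - v_h||| + \|I_h(f)-f\|_{L^2(\Omega)}\big)$. In the consistency term I would crucially use that $u_{\epsilon,h}-v_h \in V_h \subset H_0^1(\Omega;\omega_2)$ together with (\ref{sob}) to write $\|u_{\epsilon,h}-v_h\|_{L^2(\Omega)} \le C_{\omega_2}\|\nabla_{X_2}(u_{\epsilon,h}-v_h)\| \le C_{\omega_2}\,|||u_{\epsilon,h}-v_h|||$. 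A triangle inequality and taking the infimum over $v_h$ give $|||u_{\epsilon,h}-u_\epsilon||| \le C\big(\inf_{v_h \in V_h}|||u_\epsilon - v_h||| + \|I_h(f)-f\|_{L^2(\Omega)}\big)$.

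It then remains to estimate the two terms with the right powers of $\epsilon$. Since $f \in H^2(\Omega) \hookrightarrow C(\bar{\Omega})$ in dimension $2$ and $3$, the assumptions of Theorem \ref{theorem-H2-upertubed} hold and give $u_\epsilon \in H^2(\Omega)$; because $u_\epsilon$ is continuous and vanishes on $\partial\Omega$, the choice $v_h = I_h(u_\epsilon) \in V_h$ is legitimate. Using $\epsilon \le 1$ (so $|||w||| \le \|\nabla w\|_{L^2}$) and (\ref{interpolestH1}), the dominant contribution is $\|D^2_{X_1} u_\epsilon\| \le C\epsilon^{-2}$ from Theorem \ref{theorem-H2-upertubed}, whence $\inf_{v_h}|||u_\epsilon - v_h||| \le |||u_\epsilon - I_h u_\epsilon||| \le C h \epsilon^{-2}$. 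For the quadrature error, (\ref{interpolestL2}) together with $f \in H^2(\Omega) \subset H^1(\Omega)$ gives $\|I_h(f)-f\|_{L^2(\Omega)} \le C h\,\|\nabla f\|_{L^2(\Omega)} \le C h \le C h\epsilon^{-2}$. Adding the two bounds and using $\|\nabla_{X_2}(u_{\epsilon,h}-u_\epsilon)\| \le |||u_{\epsilon,h}-u_\epsilon|||$ yields the claimed estimate.

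I expect the main obstacle to be the consistency term rather than the interpolation error. Since coercivity controls the $X_1$-gradient only with the degenerate weight $\epsilon^2$, one cannot invoke a full-gradient Poincaré inequality to bound $\|u_{\epsilon,h}-v_h\|_{L^2(\Omega)}$; the anisotropic Poincaré inequality (\ref{sob}) in the $X_2$ variable is exactly what closes the argument, and it is the reason the final bound is naturally expressed in the $\nabla_{X_2}$ seminorm. The only other delicate bookkeeping is tracking the factor $\epsilon^{-2}$ through the anisotropic $H^2$ estimate of Theorem \ref{theorem-H2-upertubed}, which is precisely what produces the $h/\epsilon^2$ on the right-hand side.
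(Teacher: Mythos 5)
Your proposal is correct and takes essentially the same approach as the paper: a C\'ea-type argument in the $\epsilon$-weighted energy norm $\epsilon^2\Vert\nabla_{X_1}\cdot\Vert^2+\Vert\nabla_{X_2}\cdot\Vert^2$, the comparison function $v_h=I_h(u_\epsilon)$, the weighted $H^2$ bounds of Theorem~\ref{theorem-H2-upertubed} producing the $h/\epsilon^2$, and the anisotropic Poincar\'e inequality (\ref{sob}) to absorb the $I_h(f)-f$ term. The only differences are organizational: the paper gets the uniform coercivity/continuity pair by expanding $A_\epsilon$ blockwise and applying Young's inequality term by term (rather than your cleaner factorization $A_\epsilon=D_\epsilon A D_\epsilon$), and it routes the quadrature error through an auxiliary discrete solution $w_{\epsilon,h}$ solving (\ref{1numbis}) with exact datum $f$, followed by a triangle inequality, instead of your one-shot Strang-type estimate.
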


\begin{proof}
Let $w_{\epsilon,h}$ be the solution of the following 
\begin{equation}
\left\{ 
\begin{array}{l}
\int_{\Omega }A_{\epsilon }\nabla w_{\epsilon ,h}\cdot \nabla
v d x=\int_{\Omega } f v d x, \quad \quad \forall v\in V_{h},%
\text{\ \ \ \ \ \ } \\ 
w_{\epsilon ,h}\in V_{h}.\text{\ \ \ \ \ \ \ \ \ \ \ \ \ \ \ \ \ \ \
\ \ \ \ \ \ \ \ \ \ \ \ \ \ \ \ \ \ \ \ \ \ \ \ \ \ \ \ \ \ \ \ \ \ \ \ \ \
\ \ \ \ \ \ \ \ \ \ \ \ \ \ \ \ \ \ \ \ }%
\end{array}%
\right. ,  \label{1numbis}
\end{equation}
By subtracting (\ref{1numbis}) from (\ref{1num}), and by testing by $(u_{\epsilon ,h}-w_{\epsilon ,h})$ to get 
\begin{equation*}
\Vert \nabla_{X_2}(u_{\epsilon ,h}-w_{\epsilon ,h}) \Vert _{L^2(\Omega)^{N-q}} \leq  \frac{C_{\omega_2}}{\lambda} \Vert I_h(f)-f \Vert _{L^2(\Omega)}.
\end{equation*}
Since $f \in H^2(\Omega)$ (thanks to assumption (\ref{fH2})), then by applying (\ref{interpolestL2}) to the second member of the above inequality, we obtain
\begin{equation}
\Vert \nabla_{X_2}(u_{\epsilon ,h}-w_{\epsilon ,h}) \Vert _{L^2(\Omega)^{N-q}} \leq  C_{_{\l,\omega_2,N}} h,
\label{w-u-perturb}
\end{equation} 
notice that $h^2 \leq h$ since $h \in (0,1]$. \\
Now, subtracting (\ref{u-perturbed-weak}) from (\ref{1numbis}) and using the Galerkin orthogonality and (\ref{hypA1}), we obtain for any $v \in V_h$,
\begin{align*}
\lambda \epsilon^2 \int_\Omega \vert \nabla_{X_1}(w_{\epsilon ,h}-u_{\epsilon}) \vert ^2 dx + \lambda \int_\Omega \vert \nabla_{X_2}(w_{\epsilon ,h}-u_{\epsilon}) \vert ^2 dx \leq \int_\Omega A_{\epsilon} \nabla (w_{\epsilon ,h}-u_{\epsilon}) \cdot \nabla (u_{\epsilon}-v) dx.
\end{align*}
Remark that by a direct application of classical Céa's lemma we obtain an \textcolor{black}{estimate} of order $O(\frac{h}{\epsilon^4})$. We will improve that by using the anisotropic nature of the perturbation, so 
let us develop the right hand side of the above inequality to obtain for any $v \in V_h$ the following
\begin{align*}
\lambda \epsilon^2 \int_\Omega \vert \nabla_{X_1}(w_{\epsilon ,h}-u_{\epsilon}) \vert ^2 dx + \lambda \int_\Omega \vert \nabla_{X_2}(w_{\epsilon ,h}-u_{\epsilon}) \vert ^2 dx 
\leq \text{                                                ~~~~~~~~~~~~~~~~~~~~~~~~~~~~~~~~~~~~~~~~}\\ 
\epsilon^2 \int_\Omega A_{11} \nabla_{X_1} (w_{\epsilon ,h}-u_{\epsilon}) \cdot \nabla _{X_1} (u_{\epsilon}-v) dx 
+ \epsilon \int_\Omega A_{12} \nabla_{X_2} (w_{\epsilon ,h}-u_{\epsilon}) \cdot \nabla_{X_1} (u_{\epsilon}-v) dx 
+ \\
 \epsilon \int_\Omega A_{21} \nabla_{X_1} (w_{\epsilon ,h}-u_{\epsilon}) \cdot \nabla_{X_2} (u_{\epsilon}-v) dx 
+ \int_\Omega A_{22} \nabla_{X_2} (w_{\epsilon ,h}-u_{\epsilon}) \cdot \nabla_{X_2} (u_{\epsilon}-v) dx.
\end{align*}
By using boundedness of $A$ (thanks to (\ref{hypA2}) or (\ref{A-class-C1})), and Young's inequality to each term in the right hand side of the previous inequality, we obtain for any $v \in V_h$
\begin{align*}
\frac{\lambda \epsilon^2 }{2}\int_\Omega \vert \nabla_{X_1}(w_{\epsilon ,h}-u_{\epsilon}) \vert ^2 dx + \frac{\lambda}{2} \int_\Omega \vert \nabla_{X_2}(w_{\epsilon ,h}-u_{\epsilon}) \vert ^2 dx 
\leq  C_{_{A,\l}} \int_\Omega \vert \nabla (u_{\epsilon}-v) \vert ^2 dx.
\end{align*}
Now, we take $v=I_h(u_\epsilon)$ (which belongs to $H^1_0 \cap H^2(\Omega)$) in the previous inequality, then we obtain 
\begin{equation*}
\Vert \nabla _{X_2} (w_{\epsilon ,h}-u_{\epsilon}) \Vert _{L^2(\Omega)^{N-q}} \leq C_{_{A,\l}} \Vert \nabla (I_h(u_\epsilon)-u_\epsilon) \Vert _{L^2(\Omega)^N}.
\end{equation*}
Applying (\ref{interpolestH1}) to right hand side of the above inequality to obtain
\begin{equation*}
\Vert \nabla _{X_2} (w_{\epsilon ,h}-u_{\epsilon}) \Vert _{L^2(\Omega)^{N-q}} \leq C_{_{A,\l}} h \Big(\sum_{i=1}^N\Vert \partial_i^2 u_\epsilon \Vert _{L^2(\Omega)}^2 \Big)^{1/2}.
\end{equation*}
Therefore, by applying Theorem \ref{theorem-H2-upertubed} to the right hand side of the previous inequality we obtain
\begin{equation*}
\Vert \nabla _{X_2} (w_{\epsilon ,h}-u_{\epsilon}) \Vert _{L^2(\Omega)^{N-q}} \leq C_{_{\l,f,A,\Omega} }\frac{h}{\epsilon^2}.
\end{equation*}
Finally, we combine the above inequality with (\ref{w-u-perturb}) and we use the triangle equality to obtain the expected result.
\end{proof}
\subsection{The second \textcolor{black}{estimate} of type (\ref{glob2}) } 

The proof of the \textcolor{black}{estimate} is based on the following theoretical result proved in our previous work (see proof of Lemma 3.9 in \cite{ogabi4}), that is the analogous discrete version of the continuous version given in Theorem \ref{thm-tauxconvergence}.
\begin{lem} \label{lem-Galerkin} \cite{ogabi4}
Suppose that assumptions of Theorem \ref{thm-tauxconvergence} hold. Let $V=V_{1}\otimes V_{2}$ where $V_1 \subset H^1_0(\omega_1)$ and $V_2 \subset H^1_0(\omega_2)$ are finite dimension spaces. Let $g \in V$. Let $u_{\epsilon,V,g}$ and $u_{V,g}$ be the solutions of 
\begin{equation*}
\left\{ 
\begin{array}{l}
\int_{\Omega }A_{\epsilon
}\nabla u_{\epsilon ,V,g}\cdot \nabla \varphi dx=\int_{\Omega }g\text{ }%
\varphi dx\text{, }\forall \varphi \in V\text{\ \ \ \ \ } \\ 
u_{\epsilon ,V,g}\in V\text{.}%
\end{array}%
\right.  
\end{equation*}
and
\begin{equation*}
u_{V,g} \in V \text{, }\int_{\Omega }A_{22}\nabla _{X_{2}}u_{V,g} \cdot
\nabla _{X_{2}}\varphi dx=\int_{\Omega }g\text{ }\varphi dx,\text{\ }\forall
\varphi \in V.
\end{equation*}
Then, 
\begin{align*}
\Vert \nabla _{X_{2}}u_{\epsilon ,V,g}-\nabla _{X_{2}}u_{V,g} \Vert
_{L^{2}(\Omega )^{N-q}} \leq  C_{_{\l,A,\Omega}} \Big( \Vert \nabla
_{X_{1}} g \Vert_{L^{2}(\Omega )^{q}}+ \Vert g \Vert
_{L^{2}(\Omega } \Big) \times \epsilon.
\end{align*}%
\end{lem}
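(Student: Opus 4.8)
The plan is to mirror, at the discrete level, the proof of the continuous estimate in Theorem \ref{thm-tauxconvergence}, by running an energy estimate on the difference $w:=u_{\epsilon,V,g}-u_{V,g}\in V$. Writing $a_\epsilon(\cdot,\cdot)$ for the perturbed bilinear form, I would insert $w$ as a test function in both discrete weak formulations and subtract. Since by the limit equation $\int_\Omega A_{22}\nabla_{X_2}u_{V,g}\cdot\nabla_{X_2}w=\int_\Omega g\,w$ cancels the source coming from the perturbed equation, one is left with
\begin{equation*}
a_\epsilon(w,w)=-\epsilon^2\!\int_\Omega\! A_{11}\nabla_{X_1}u_{V,g}\cdot\nabla_{X_1}w-\epsilon\!\int_\Omega\! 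A_{12}\nabla_{X_2}u_{V,g}\cdot\nabla_{X_1}w-\epsilon\!\int_\Omega\! A_{21}\nabla_{X_1}u_{V,g}\cdot\nabla_{X_2}w.
\end{equation*}
Applying (\ref{hypA1}) to $\xi=(\epsilon\nabla_{X_1}w,\nabla_{X_2}w)$ gives $a_\epsilon(w,w)\ge\lambda\big(\epsilon^2\Vert\nabla_{X_1}w\Vert^2+\Vert\nabla_{X_2}w\Vert^2\big)$. The key feature is that only the regularity of the limit solution $u_{V,g}$ enters the right-hand side.

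Next I would prove the discrete counterparts of Lemmas \ref{lem-regul1} and \ref{lem-regul2} for $u_{V,g}$, with constants independent of $V$; this is where the tensor structure $V=V_1\otimes V_2$ and assumption (\ref{A22}) (namely $A_{22}=A_{22}(X_2)$) are decisive. Decomposing $g=\sum_k g_{1,k}\otimes g_{2,k}$ with $g_{1,k}\in V_1$, $g_{2,k}\in V_2$, a direct verification shows $u_{V,g}=\sum_k g_{1,k}\otimes\tilde u_{2,k}$, where $\tilde u_{2,k}\in V_2$ solves the fibre problem $\int_{\omega_2}A_{22}\nabla_{X_2}\tilde u_{2,k}\cdot\nabla_{X_2}\psi=\int_{\omega_2}g_{2,k}\psi$ for all $\psi\in V_2$; equivalently, for a.e.\ $X_1$ the fibre $u_{V,g}(X_1,\cdot)\in V_2$ solves the discrete $\omega_2$-problem with datum $g(X_1,\cdot)$. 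Since the $X_1$-dependence is carried entirely by the factors $g_{1,k}\in V_1\subset H_0^1(\omega_1)$, I may differentiate in $X_1$ and conclude that, for $i\le q$, $\partial_{x_i}u_{V,g}(X_1,\cdot)\in V_2$ solves the same fibre problem with datum $\partial_{x_i}g(X_1,\cdot)$. Discrete elliptic stability in $\omega_2$ together with Poincaré's inequality (\ref{sob}), after integration in $X_1$, then yields the uniform bounds $\Vert\nabla_{X_2}u_{V,g}\Vert\le C\Vert g\Vert$, $\Vert\nabla_{X_1}u_{V,g}\Vert\le C\Vert\nabla_{X_1}g\Vert$, and, most importantly, $\Vert D^2_{X_1X_2}u_{V,g}\Vert\le C\Vert\nabla_{X_1}g\Vert$, all with $C=C_{\lambda,A,\omega_2}$.

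With these bounds I would estimate the three terms. The first is absorbed into $\lambda\epsilon^2\Vert\nabla_{X_1}w\Vert^2$ by Young's inequality, contributing $O(\epsilon^2)$; the third is bounded by $\epsilon\Vert A_{21}\Vert_\infty\Vert\nabla_{X_1}u_{V,g}\Vert\,\Vert\nabla_{X_2}w\Vert$ and absorbed into $\tfrac{\lambda}{4}\Vert\nabla_{X_2}w\Vert^2$. The middle term is the delicate one, and a crude Cauchy–Schwarz bound would only yield a rate $O(\epsilon^{1/2})$. Instead I would integrate by parts in $X_1$, which is licit because $w\in H_0^1(\Omega)$ and, thanks to the product structure, $D^2_{X_1X_2}u_{V,g}\in L^2(\Omega)$; using (\ref{hypAd1}) and (\ref{A-class-C1}) this gives
\begin{equation*}
\epsilon\!\int_\Omega\! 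A_{12}\nabla_{X_2}u_{V,g}\cdot\nabla_{X_1}w=-\epsilon\!\int_\Omega\!\big(\mathrm{div}_{X_1}(A_{12}\nabla_{X_2}u_{V,g})\big)\,w,
\end{equation*}
with $\Vert\mathrm{div}_{X_1}(A_{12}\nabla_{X_2}u_{V,g})\Vert\le C(\Vert g\Vert+\Vert\nabla_{X_1}g\Vert)$. By Poincaré (\ref{sob}) and Young this term is $\le\tfrac{\lambda}{4}\Vert\nabla_{X_2}w\Vert^2+C\epsilon^2(\Vert g\Vert+\Vert\nabla_{X_1}g\Vert)^2$. Collecting everything and absorbing the $\Vert\nabla_{X_2}w\Vert^2$ and $\epsilon^2\Vert\nabla_{X_1}w\Vert^2$ contributions into the left-hand side gives $\tfrac{\lambda}{2}\Vert\nabla_{X_2}w\Vert^2\le C\epsilon^2(\Vert\nabla_{X_1}g\Vert+\Vert g\Vert)^2$, which is the asserted estimate.

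The main obstacle is exactly this mixed term: estimating it naively leaves a free factor $\Vert\nabla_{X_1}w\Vert$ absorbable only at the price of halving the power of $\epsilon$. Recovering the genuine $O(\epsilon)$ rate requires transferring the $X_1$-derivative off $w$, which forces the discrete mixed regularity $D^2_{X_1X_2}u_{V,g}\in L^2$ with a $V$-independent bound; this is precisely what the tensor structure $V_1\otimes V_2$ and the $X_1$-independence of $A_{22}$ deliver, and it is the discrete analogue of Lemma \ref{lem-regul2}.
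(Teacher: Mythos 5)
Your proof is correct and is essentially the approach the paper relies on: the paper defers this lemma to the proof of Lemma 3.9 in \cite{ogabi4}, presenting it as the discrete analogue of Theorem \ref{thm-tauxconvergence}, and your argument---energy identity for the difference $w=u_{\epsilon,V,g}-u_{V,g}$, $V$-independent regularity of $u_{V,g}$ obtained from the tensor structure and assumption (\ref{A22}) (the discrete counterpart of Lemma \ref{lem-regul2}), and integration by parts in $X_1$ to recover the full power of $\epsilon$ in the mixed $A_{12}$ term---is exactly that. Note only that the integration by parts needs just (\ref{hypAd1}), which is among the hypotheses of Theorem \ref{thm-tauxconvergence}; the assumption (\ref{A-class-C1}) you also invoke is not available under this lemma's hypotheses, but it is not needed.
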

\begin{rmk} \label{rmk-tensor}
The space $V_h$ has the same tensor structure of the space $V$ of Lemma \ref{lem-Galerkin}. In fact:\\
- When $N=2$, identity (\ref{Vh-tensor}) is $V_h= V_{h}^1 \otimes V_{h}^2$.\\
- When $N=3$, identity (\ref{Vh-tensor}) is $V_h= V_{h}^1 \otimes (V_{h}^2 \otimes V_{h}^3)= (V_{h}^1 \otimes V_{h}^2) \otimes V_{h}^3$, the first equality corresponds to the case $q=1$, and the second equality corresponds to the case $q=2$.
\end{rmk}
Now, we are ready to prove the following:
\begin{prop} \label{prop-num2}
Let $\Omega = (0,1)^N$, with $N \in \{2,3\}$. Assume that $A$ satisfies (\ref{hypA1}), (\ref{A-class-C1}), and (\ref{A22}). Let $f$ such that (\ref{fH2}) is satisfied, then there exist a positive constant $C_{_{\l,f,A,\Omega}}>0$ such that
\begin{equation*}
\Vert \nabla_{X_2}(u_{\epsilon,h}-u_{\epsilon}) \Vert _{L^2(\Omega)^{N-q}} \leq C_{_{\l,f,A,\Omega}} (h^{1/4}+\epsilon^{1/2}).
\end{equation*}
In particular, if we assume in addition that $f \in H^1_0(\Omega)$ then we have 
\begin{equation*}
\Vert \nabla_{X_2}(u_{\epsilon,h}-u_{\epsilon}) \Vert _{L^2(\Omega)^{N-q}} \leq C_{_{\l,f,A,\Omega}} (h+\epsilon).
\end{equation*}
\end{prop}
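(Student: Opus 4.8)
The plan is to establish the general estimate for $f\in H^2(\Omega)$ by an interpolation-trick decomposition, and then recover the sharper $H^1_0$ estimate as a degenerate case of the same chain. The essential difficulty is that the discrete convergence Lemma \ref{lem-Galerkin} demands a source lying in the tensor space $V_h$, whereas the nodal interpolant $I_h(f)$ appearing in the scheme (\ref{1num}) need not vanish on $\partial\Omega$ when $f$ has no zero trace. To circumvent this, I would first invoke Lemma \ref{lem-decomp-bis} (applicable since $\Omega=(0,1)^N$ is Lipschitz, hence satisfies (D), and $f\in H^2\hookrightarrow L^\infty$ in dimension $2,3$) to write, for each $\delta\in(0,1)$, $f=f^1_\delta+f^2_\delta$ with $f^1_\delta\in H^1_0\cap H^2(\Omega)$, $f^2_\delta\in H^2(\Omega)$, and the controls $\Vert f^1_\delta\Vert_{H^1}\le C\delta^{-1/2}$, $\Vert f^1_\delta\Vert_{H^2}\le C\delta^{-3/2}$, $\Vert f^2_\delta\Vert_{L^2}\le C\delta^{1/2}$. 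By linearity of the scheme and of the continuous problems, $u_{\epsilon,h}=u^1_{\epsilon,h}+u^2_{\epsilon,h}$ and $u_\epsilon=u^1_\epsilon+u^2_\epsilon$, where superscript $i$ denotes the solution with source $f^i_\delta$ (resp. $I_h(f^i_\delta)$ discretely, $I_h$ being linear). Since $f^1_\delta$ has zero trace, its continuous interpolant $I_h(f^1_\delta)$ lies in $V_h$, which is exactly what is needed below.

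For the regular part I would insert the discrete and continuous limit solutions and split by the triangle inequality
$$\Vert\nabla_{X_2}(u^1_{\epsilon,h}-u^1_\epsilon)\Vert \le \Vert\nabla_{X_2}(u^1_{\epsilon,h}-u^1_h)\Vert + \Vert\nabla_{X_2}(u^1_h-u^1)\Vert + \Vert\nabla_{X_2}(u^1-u^1_\epsilon)\Vert =: (A)+(B)+(C),$$
where $u^1_h\in V_h$ solves the discrete limit problem with source $I_h(f^1_\delta)$ and $u^1$ the continuous limit problem with source $f^1_\delta$. For $(A)$ I would apply Lemma \ref{lem-Galerkin} with $g=I_h(f^1_\delta)\in V_h$ (the tensor structure granted by Remark \ref{rmk-tensor}), and bound $\Vert\nabla_{X_1}I_h(f^1_\delta)\Vert+\Vert I_h(f^1_\delta)\Vert$ through the stability/approximation estimates (\ref{interpolestH1})--(\ref{interpolestL2}) and Lemma \ref{lem-decomp-bis} by $C\delta^{-1/2}$ (using $h\le\delta$), whence $(A)\le C\delta^{-1/2}\epsilon$. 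For $(C)$ I would use Theorem \ref{thm-tauxconvergence} (valid as $f^1_\delta\in H^1_0(\Omega)\subset H^1_0(\Omega;\omega_1)$), getting $(C)\le C(\Vert\nabla_{X_1}f^1_\delta\Vert+\Vert f^1_\delta\Vert)\epsilon\le C\delta^{-1/2}\epsilon$. For the genuine finite-element term $(B)$ I would interpose the discrete limit solution $\tilde u^1_h$ carrying the exact source $f^1_\delta$: the source-mismatch part is controlled via coercivity (\ref{hypA_221}), Poincaré (\ref{sob}) and (\ref{interpolestL2}) by $Ch\Vert\nabla f^1_\delta\Vert\le Ch\delta^{-1/2}$, while the discretization part is handled by Céa's lemma for the coercive limit form, yielding $Ch\Vert u^1\Vert_{H^2}$ upon taking the interpolant $I_h(u^1)$ (which lies in $V_h$ since $u^1\in H^1_0\cap H^2(\Omega)$ by Theorems \ref{thm-tauxconvergence} and \ref{thm-regularityH2-u}); as $\Vert u^1\Vert_{H^2}\le C\Vert f^1_\delta\Vert_{H^2}\le C\delta^{-3/2}$, this gives $(B)\le Ch\delta^{-3/2}$.

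For the singular part $f^2_\delta$ I would forgo any cancellation and bound each solution by testing with itself: coercivity and Poincaré give $\Vert\nabla_{X_2}u^2_\epsilon\Vert\le C\Vert f^2_\delta\Vert_{L^2}\le C\delta^{1/2}$ and $\Vert\nabla_{X_2}u^2_{\epsilon,h}\Vert\le C\Vert I_h(f^2_\delta)\Vert_{L^2}$, the latter being $\le C\delta^{1/2}$ after $\Vert I_h(f^2_\delta)-f^2_\delta\Vert_{L^2}\le Ch\Vert\nabla f^2_\delta\Vert\le Ch\delta^{-1/2}\le C\delta^{1/2}$ (again $h\le\delta$). Collecting the three pieces gives, for every $\delta\in[h,1)$,
$$\Vert\nabla_{X_2}(u_{\epsilon,h}-u_\epsilon)\Vert \le C\big(\delta^{-1/2}\epsilon + h\delta^{-3/2} + \delta^{1/2}\big).$$
The final and most delicate step is the choice of $\delta$: taking $\delta=\max(\epsilon,h^{1/2})$ balances the terms and produces $C(h^{1/4}+\epsilon^{1/2})$, as a short case split ($\epsilon\ge h^{1/2}$ versus $\epsilon<h^{1/2}$, using $h\le\delta$ in each) confirms. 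I expect this optimization — and the bookkeeping of how the blow-up rates $\delta^{-1/2},\delta^{-3/2}$ of the regular part compete against the smallness $\delta^{1/2}$ of the singular part — to be the main obstacle, since the three exponents must conspire exactly to yield $h^{1/4}$. The improved estimate under the extra hypothesis $f\in H^1_0(\Omega)$ follows from the identical chain with no decomposition, because $I_h(f)\in V_h$ directly: then $(A)$ and $(C)$ are each $O(\epsilon)$ and $(B)$ is $O(h)$, giving $C(h+\epsilon)$.
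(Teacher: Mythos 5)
Your proposal is correct, and it assembles exactly the same ingredients as the paper --- Lemma \ref{lem-decomp-bis}, Lemma \ref{lem-Galerkin} with Remark \ref{rmk-tensor}, C\'ea's lemma with the interpolation estimates (\ref{interpolestH1})--(\ref{interpolestL2}), and Theorems \ref{thm-tauxconvergence} and \ref{thm-regularityH2-u} --- but the architecture is genuinely different. The paper first freezes a four-term splitting $u_{\epsilon,h}-u_h$, $u_h-w_h$, $w_h-u$, $u-u_\epsilon$ (with $u_h$, $w_h$ the discrete limit solutions for the sources $I_h(f)$ and $f$), and only then decomposes $f$ \emph{inside} the first and third terms, with two independent, purely $h$-dependent choices $\delta=h^{2/3}$ (Lemma \ref{lem-step1}, giving $\epsilon+h^{1/3}$) and $\delta=h^{1/2}$ (Lemma \ref{lem-step2}, giving $h^{1/4}$); the $\epsilon^{1/2}$ then enters through a separate appeal to Theorem \ref{thm-tauxconvergence-H1} for the purely continuous term $u-u_\epsilon$. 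You instead decompose $f$ once, carry a single $\delta$ through the whole chain, and extract both $h^{1/4}$ and $\epsilon^{1/2}$ from the joint optimization $\delta=\max(\epsilon,h^{1/2})$, never invoking Theorem \ref{thm-tauxconvergence-H1} (whose proof is itself a decomposition argument that your computation reproduces inline). Your route buys a more self-contained, unified argument; the paper's buys modularity, since each of its three lemmas is a standalone estimate and no two-parameter case analysis is needed. Two details to record when writing this up: the corner cases $\epsilon=1$ or $h=1$ make $\delta=1$ fall outside the range allowed by Lemma \ref{lem-decomp-bis}, but there the claimed bound is trivial by the basic energy estimates, so one only enlarges the constant; and your bound $\Vert I_h(f^2_\delta)-f^2_\delta\Vert_{L^2}\le C h\delta^{-1/2}$ uses $\Vert\nabla f^2_\delta\Vert_{L^2}\le C\delta^{-1/2}$, which follows from $f^2_\delta=f-f^1_\delta$ and the $H^1$ bound on $f^1_\delta$ but is not listed among the conclusions of Lemma \ref{lem-decomp-bis}, so it deserves an explicit line.
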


We process by several steps. \\
\textbf{Step 1.}
Let $u_h$ be the solution of the following problem
\begin{equation}
\left\{ 
\begin{array}{l}
\int_{\Omega }A_{22 }\nabla_{X_2} u_{h}\cdot \nabla_{X_2}
v \diff x=\int_{\Omega }I_{h}(f)\text{ }v d x,\quad \forall v\in V_{h}%
\text{\ \ \ \ \ \ } \\ 
u_{h}\in V_{h}.\text{\ \ \ \ \ \ \ \ \ \ \ \ \ \ \ \ \ \ \
\ \ \ \ \ \ \ \ \ \ \ \ \ \ \ \ \ \ \ \ \ \ \ \ \ \ \ \ \ \ \ \ \ \ \ \ \ \
\ \ \ \ \ \ \ \ \ \ \ \ \ \ \ \ \ \ \ \ }%
\end{array}%
\right.   \label{1numlim}
\end{equation} 
We have the following 
\begin{lem} \label{lem-step1}
Suppose that assumptions of Proposition \ref{prop-num2} hold then:\\
- For $f \in H^1_0 \cap H^2(\Omega)$ we have 
\begin{equation}\label{ueh-uh-H0}
\left\Vert \nabla_{X_2}(u_{\epsilon ,h}-u_{h})\right\Vert
_{L^{2}(\Omega )^{N-q}}\leq C_{_{\l,f,A,\Omega}}\times \epsilon.
\end{equation}
- For $f \in H^2(\Omega)$ we have 
\begin{equation}\label{ueh-uh-H2}
\left\Vert \nabla_{X_2}(u_{\epsilon ,h}-u_{h})\right\Vert
_{L^{2}(\Omega )^{N-q}}\leq C_{_{\,f,A,\Omega}}(\epsilon+h^{1/3}).
\end{equation}

\end{lem}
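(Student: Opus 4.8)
The plan is to deduce both inequalities from the discrete convergence estimate of Lemma \ref{lem-Galerkin}, which is available precisely when the common source of the two discrete problems lies in the tensor-product space $V_h=\otimes_{i=1}^N V_h^i$ (Remark \ref{rmk-tensor}). Since $u_{\epsilon,h}$ and $u_h$ are driven by the same datum $I_h(f)$, the only obstruction to applying Lemma \ref{lem-Galerkin} directly is whether $I_h(f)\in V_h$, i.e. whether $I_h(f)$ vanishes on $\partial\Omega$.

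\textbf{Regular case} $f\in H^1_0\cap H^2(\Omega)$. Here $f\in C(\bar\Omega)$ vanishes on $\partial\Omega$, so all boundary nodal values of $f$ are zero and $I_h(f)\in V_h$. By uniqueness of the discrete solutions, $u_{\epsilon,h}=u_{\epsilon,V_h,I_h(f)}$ and $u_h=u_{V_h,I_h(f)}$ in the notation of Lemma \ref{lem-Galerkin}, whence
\[
\|\nabla_{X_2}(u_{\epsilon,h}-u_h)\|_{L^2(\Omega)^{N-q}}\leq C_{\l,A,\Omega}\big(\|\nabla_{X_1}I_h(f)\|_{L^2(\Omega)^q}+\|I_h(f)\|_{L^2(\Omega)}\big)\,\epsilon .
\]
The bracket is bounded independently of $h$: the triangle inequality together with the interpolation estimates (\ref{interpolestH1})-(\ref{interpolestL2}) and $h\leq 1$ give $\|\nabla_{X_1}I_h(f)\|\leq\|I_h(f)\|_{H^1}\leq C\|f\|_{H^2}$ and $\|I_h(f)\|_{L^2}\leq C\|f\|_{H^1}$, which yields (\ref{ueh-uh-H0}).

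\textbf{Singular case} $f\in H^2(\Omega)$. Now $I_h(f)$ need not vanish on $\partial\Omega$, so I would peel off the boundary part with the decomposition Lemma \ref{lem-decomp-bis}, legitimate because $H^2(\Omega)\hookrightarrow L^\infty(\Omega)$ in dimension $2$ and $3$: write $f=f^1_\delta+f^2_\delta$ with $f^1_\delta\in H^1_0\cap H^2(\Omega)$ and $\|f^1_\delta\|_{H^2}\leq C\delta^{-3/2}$, $\|f^1_\delta\|_{L^2}\leq\|f\|_{L^2}$, $\|f^2_\delta\|_{L^2}\leq C\delta^{1/2}$. By linearity of (\ref{1num}), (\ref{1numlim}) and of $I_h$, I split $u_{\epsilon,h}=u^1_{\epsilon,h}+u^2_{\epsilon,h}$ and $u_h=u^1_h+u^2_h$ along the sources $I_h(f^1_\delta)$ and $I_h(f^2_\delta)$. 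The regular piece is handled as above: $I_h(f^1_\delta)\in V_h$, so Lemma \ref{lem-Galerkin}, interpolation stability and Lemma \ref{lem-decomp-bis} control $\|\nabla_{X_2}(u^1_{\epsilon,h}-u^1_h)\|$ by a negative power of $\delta$ times $\epsilon$.

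For the remainder $f^2_\delta$, whose interpolant need not belong to $V_h$, I would abandon the comparison structure and bound each solution by a crude energy estimate: testing (\ref{1num}) (with source $I_h(f^2_\delta)$) against $u^2_{\epsilon,h}$ and using ellipticity (\ref{hypA1}) and Poincar\'e's inequality (\ref{sob}) gives $\|\nabla_{X_2}u^2_{\epsilon,h}\|\leq\frac{C_{\omega_2}}{\l}\|I_h(f^2_\delta)\|_{L^2}$, and (\ref{hypA_221}) yields the same bound for $u^2_h$; the triangle inequality then controls $\|\nabla_{X_2}(u^2_{\epsilon,h}-u^2_h)\|$ by $C\|I_h(f^2_\delta)\|_{L^2}$, which Lemma \ref{lem-decomp-bis} and (\ref{interpolestL2}) bound by $C(\delta^{1/2}+h\,\delta^{-1/2})$. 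Summing the two contributions and choosing $\delta$ as a suitable power of $h$ produces (\ref{ueh-uh-H2}). The main obstacle lies exactly in this last balancing: the cut-off defining $f^1_\delta$ makes $\|\nabla_{X_1}f^1_\delta\|$ grow like $\delta^{-1/2}$, so the regular piece contributes a $\delta$-dependent multiple of $\epsilon$ that competes against the $O(\delta^{1/2})$ smallness of the remainder; reconciling these—while simultaneously tracking the $I_h$-interpolation errors on both $f^1_\delta$ and $f^2_\delta$ and exploiting $h\leq 1$—is the crux, and is what ultimately fixes the $h^{1/3}$ rate.
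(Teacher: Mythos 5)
Your first case ($f\in H^1_0\cap H^2(\Omega)$) is correct and coincides with the paper's own argument: $I_h(f)\in V_h$ because $f$ vanishes on $\partial\Omega$, then Lemma \ref{lem-Galerkin} via Remark \ref{rmk-tensor} with $g=I_h(f)$, then interpolation stability of $I_h$ (the paper additionally drops the $\Vert I_h(f)\Vert_{L^2}$ term by Poincar\'e's inequality, but bounding both terms as you do is equivalent). Your second case also follows the paper's architecture step by step: decomposition by Lemma \ref{lem-decomp-bis}, splitting of both discrete solutions by linearity of the schemes and of $I_h$, Lemma \ref{lem-Galerkin} on the $f^1_\delta$ piece, crude energy bounds on the $f^2_\delta$ piece, and a final balancing in $\delta$. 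Your extra term $h\delta^{-1/2}$ coming from $\Vert I_h(f^2_\delta)-f^2_\delta\Vert_{L^2}$ (which the paper silently ignores by writing $\Vert f^2_\delta\Vert_{L^2}$ for the discrete source) is harmless.

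The genuine gap is in the balancing itself, which you explicitly defer as ``the crux'': with the bounds you derived it cannot be carried out. Your two contributions are $C(\delta^{-1/2}+h\delta^{-3/2})\epsilon$ and $C(\delta^{1/2}+h\delta^{-1/2})$, and for every $\delta\in(0,1)$ their sum dominates $\delta^{-1/2}\epsilon+\delta^{1/2}\geq 2\epsilon^{1/2}$, so no choice of $\delta$ — a power of $h$ or depending on $\epsilon$ as well — can produce a right-hand side linear in $\epsilon$; for instance with $\epsilon=h^{1/2}$ the target $C(\epsilon+h^{1/3})=O(h^{1/3})$ is of strictly smaller order than the lower bound $2h^{1/4}$. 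The best your estimates give is $C(\epsilon^{1/2}+h^{1/3})$ (take $\delta=\max(\epsilon,h^{2/3})$). The paper reaches (\ref{ueh-uh-H2}) only because, after (\ref{ueh-uh-decomp1}), it writes $\Vert\nabla_{X_1}I_h(f^1_\delta)\Vert_{L^2(\Omega)^q}\leq\Vert f^1_\delta\Vert_{L^2(\Omega)}+C_Nh\Vert f^1_\delta\Vert_{H^2(\Omega)}\leq C(1+h\delta^{-3/2})$, invoking the $L^2$-stability $\Vert f^1_\delta\Vert_{L^2}\leq\Vert f\Vert_{L^2}$ of Lemma \ref{lem-decomp-bis} so that the coefficient of $\epsilon$ stays bounded as $\delta\to 0$; then $\delta=h^{2/3}$ gives exactly $\epsilon+h^{1/3}$. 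Your (standard) reading of (\ref{interpolestH1}), which leaves $\Vert\nabla_{X_1}f^1_\delta\Vert\sim\delta^{-1/2}$ rather than $\Vert f^1_\delta\Vert_{L^2}$, is precisely what blocks your route — and it also shows that the paper's inequality does not follow from (\ref{interpolestH1}) alone, so the tension you ran into is real rather than an oversight on your part. Note finally that your weaker bound $C(\epsilon^{1/2}+h^{1/3})$ would still suffice for Proposition \ref{prop-num2}, since in its Step 4 the term supplied by Theorem \ref{thm-tauxconvergence-H1} is already $O(\epsilon^{1/2})$; but as a proof of (\ref{ueh-uh-H2}) as stated, your argument is incomplete.
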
 
\begin{proof}
1) Suppose that $f\in H_{0}^{1}(\Omega )\cap H^{2}(\Omega )$, then $I_{h}(f)\in
V_{h}$. Therefore, according to Lemma \ref{lem-Galerkin} and Remark \ref{rmk-tensor} with $g$ and $V$ replaced by $%
I_{h}(f)$ and $V_{h}$ respectively, we have 
\begin{equation*}
\left\Vert \nabla_{X_2}(u_{\epsilon ,h}-u_{h})\right\Vert
_{L^{2}(\Omega )^{N-q}}\leq C_{_{\l,A,\Omega}} \left( \left\Vert \nabla_{X_1}I_{h}(f)\right\Vert
_{L^{2}(\Omega )^q}+\left\Vert I_{h}(f)\right\Vert _{L^{2}(\Omega )}\right)\times \epsilon.
\end{equation*}
Since  $%
I_{h}(f) \in H^1_0(\Omega)$, then by using Poincaré's inequality (\ref{sob-bis}) we obtain
\begin{equation}
\label{ueh-uh}
\left\Vert \nabla_{X_2}(u_{\epsilon ,h}-u_{h})\right\Vert
_{L^{2}(\Omega )^{N-q}}\leq C_{_{\l,A,\Omega}} \times \left\Vert \nabla_{X_1}I_{h}(f)\right\Vert
_{L^{2}(\Omega )^q}\times \epsilon.
\end{equation}
In the other hand (\ref{interpolestH1}) gives 
\begin{align}
\left\Vert \nabla_{X_1}I_{h}(f)\right\Vert _{L^{2}(\Omega )^q}\leq \left\Vert
f\right\Vert _{L^{2}(\Omega )}+C_Nh\left\Vert f\right\Vert _{H^{2}(\Omega)}
\leq C_N \Vert f \Vert _{H^{2}(\Omega)},
\label{interpol-Df}
\end{align}
Therefore, from (\ref{ueh-uh}) and (\ref{interpol-Df}) yields
\begin{equation}\label{ueh-uh-bis}
\left\Vert \nabla_{X_2}(u_{\epsilon ,h}-u_{h})\right\Vert
_{L^{2}(\Omega )^{N-q}}\leq C_{_{\l,f,\Omega,A}}\epsilon.
\end{equation}
2) Now, we suppose that $f \in H^2(\Omega)$, from Lemma \ref{lem-decomp-bis} we use the decomposition $f=f_{\delta}^1+f_{\delta}^2$, and let $u_{\epsilon,h}^i$, $u_h^i \text{, }i=1,2$ be the solutions of (\ref{1num}) and (\ref{1numlim}) respectively with $f$ replaced by $f_{\delta}^i$. The linearity of the equations and the operator $I_h$ gives 
\begin{equation}\label{decomp-sol-num}
u_{\epsilon,h}=u_{\epsilon,h}^1+u_{\epsilon,h}^2 \text{ and } u_{h}=u_{h}^1+u_{h}^2
\end{equation}
As in (\ref{ueh-uh}) we have 
\begin{equation}
\label{ueh-uh-decomp1}
\left\Vert \nabla_{X_2}(u_{\epsilon ,h}^1-u_{h}^1)\right\Vert
_{L^{2}(\Omega )^{N-q}}\leq C_{_{\l,\Omega,A}} \left\Vert \nabla_{X_1}I_{h}(f_{\delta}^1)\right\Vert
_{L^{2}(\Omega )^q}\times \epsilon .
\end{equation}
According to (\ref{interpolestH1}) we obtain
\begin{align*}
\left\Vert \nabla_{X_1}I_{h}(f_{\delta}^1)\right\Vert _{L^{2}(\Omega )^q}\leq \left\Vert
f_{\delta}^1\right\Vert _{L^{2}(\Omega )}+C_N h\left\Vert f_{\delta}^1\right\Vert _{H^{2}(\Omega)},
\end{align*}
and therefore, by applying Lemma \ref{lem-decomp-bis} we get 
\begin{align*}
\left\Vert \nabla_{X_1}I_{h}(f_{\delta}^1)\right\Vert _{L^{2}(\Omega )^q}\leq C_{_{f,\Omega}}\Big(1+ h \delta^{-3/2}\Big).
\end{align*}
Combining this with (\ref{ueh-uh-decomp1}) to obtain 
\begin{equation}
\label{ueh-uh-decomp1bis}
\left\Vert \nabla_{X_2}(u_{\epsilon ,h}^1-u_{h}^1)\right\Vert
_{L^{2}(\Omega )^{N-q}}\leq C_{_{\l,f,A,\Omega}}\Big(1+ h \delta^{-3/2}\Big)\times \epsilon .
\end{equation}
In the other hand, testing by $u_{\epsilon,h}^2$ and $u_{h}^2$ is the corresponding formulations (\ref{1num}) and (\ref{1numlim}), with $f$ replaced by $f_{\delta}^2$, we obtain this basic \textcolor{black}{estimate}
\begin{equation}
\label{ueh-uh-decomp2}
\left\Vert \nabla_{X_2}(u_{\epsilon ,h}^2-u_{h}^2)\right\Vert
_{L^{2}(\Omega )^{N-q}}\leq C_{_{\l,\Omega}} \Vert f_{\delta}^2 \Vert _{L^2(\Omega)}.
\end{equation}
Now, applying Lemma \ref{lem-decomp-bis} to the right hand side of (\ref{ueh-uh-decomp2}) we obtain
\begin{equation}
\label{ueh-uh-decomp2bis}
\left\Vert \nabla_{X_2}(u_{\epsilon ,h}^2-u_{h}^2)\right\Vert
_{L^{2}(\Omega )^{N-q}}\leq C_{_{\l,f,\Omega}} \times \delta^{1/2}
\end{equation}
The combination of (\ref{decomp-sol-num}), (\ref{ueh-uh-decomp1bis}) and (\ref{ueh-uh-decomp2bis}) gives, by the triangle inequality, the following 
 \begin{equation}
\left\Vert \nabla_{X_2}(u_{\epsilon ,h}-u_{h})\right\Vert
_{L^{2}(\Omega )^{N-q}}\leq C_{_{\l,f,A,\Omega}}\Big( \epsilon+ h \delta^{-3/2} \epsilon +\delta^{1/2}\Big).
\end{equation}
Finally, since $\delta$ is arbitrary in $(0,1)$, then by setting $\delta=h^{2/3}$ in the previous inequality we get 
\begin{equation}
\left\Vert \nabla_{X_2}(u_{\epsilon ,h}-u_{h})\right\Vert
_{L^{2}(\Omega )^{N-q}}\leq C_{_{\l,f,A,\Omega}}\Big( \epsilon+ h^{1/3}\Big).
\end{equation}
\end{proof} \smallskip
\textbf{Step 2.} 
We denote $w_h$ the solution to following the problem
\begin{equation}
\left\{ 
\begin{array}{l}
\int_{\Omega }A_{22 }\nabla_{X_2} w_{h}\cdot \nabla_{X_2}
v d x=\int_{\Omega }f \text{ }v d x, \quad \forall v\in V_{h}%
\text{\ \ \ \ \ \ } \\ 
w_{h}\in V_{h}.\text{\ \ \ \ \ \ \ \ \ \ \ \ \ \ \ \ \ \ \
\ \ \ \ \ \ \ \ \ \ \ \ \ \ \ \ \ \ \ \ \ \ \ \ \ \ \ \ \ \ \ \ \ \ \ \ \ \
\ \ \ \ \ \ \ \ \ \ \ \ \ \ \ \ \ \ \ \ }%
\end{array}%
\right.  \label{1numlim-bis}
\end{equation} 
We have the following
\begin{lem} \label{lem-step2}
Assume that assumptions of Proposition \ref{prop-num2} hold, then:\\
- If $f \in H^1_0 \cap H^2(\Omega)$, we have :
$$ \Vert \nabla_{X_2}(w_h-u) \Vert_{L^2(\Omega)} \leq C_{_{\l,f,A,\Omega}}h$$
- If $f \in  H^2(\Omega)$, we have:$$ \Vert \nabla_{X_2}(w_h-u) \Vert_{L^2(\Omega)} \leq C_{_{\l,f,A,\Omega}}h^{1/4}$$

\end{lem}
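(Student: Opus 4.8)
The plan is to read both estimates as quasi-optimality (Céa-type) bounds for the \emph{degenerate} bilinear form $a(v,w)=\int_\Omega A_{22}\nabla_{X_2}v\cdot\nabla_{X_2}w\,dx$. By the ellipticity (\ref{hypA_221}) and the boundedness of $A_{22}$ (which follows from (\ref{A-class-C1})), $a$ is coercive and continuous with respect to the seminorm $\Vert\nabla_{X_2}(\cdot)\Vert_{L^2(\Omega)^{N-q}}$, and this seminorm is a genuine norm on $H_0^1(\Omega;\omega_2)$ thanks to Poincaré's inequality (\ref{sob}). Since $V_h\subset H_0^1(\Omega)\subset H_0^1(\Omega;\omega_2)$ and (\ref{u-limit-weak}) and (\ref{1numlim-bis}) carry the \emph{same} right-hand side $\int_\Omega fv\,dx$, subtracting them gives the Galerkin orthogonality $a(u-w_h,v)=0$ for all $v\in V_h$. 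Hence, for every $v\in V_h$,
$$
\lambda\Vert\nabla_{X_2}(u-w_h)\Vert^2\le a(u-w_h,u-w_h)=a(u-w_h,u-v)\le C_{_A}\Vert\nabla_{X_2}(u-w_h)\Vert\,\Vert\nabla_{X_2}(u-v)\Vert,
$$
so that $\Vert\nabla_{X_2}(u-w_h)\Vert_{L^2(\Omega)^{N-q}}\le C_{_{\l,A}}\Vert\nabla(u-v)\Vert_{L^2(\Omega)^{N}}$. The whole proof thus reduces to approximating $u$ in $V_h$ in the full $H^1$ seminorm.

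\emph{Case $f\in H_0^1(\Omega)\cap H^2(\Omega)$.} Here I would first show $u\in H_0^1(\Omega)\cap H^2(\Omega)$. Indeed $f\in H_0^1(\Omega)$ gives $f\in H_0^1(\Omega;\omega_1)$ (its traces on $\partial\omega_1\times\omega_2$ vanish), and (\ref{A-class-C1}) implies (\ref{hypAd1}); Theorem \ref{thm-tauxconvergence} then yields $u\in H_0^1(\Omega)$. Since $\omega_2=(0,1)^{N-q}$ is convex and $f\in H^2(\Omega)$ supplies $\nabla_{X_1}f,D^2_{X_1}f\in L^2(\Omega)$, Theorem \ref{thm-regularityH2-u} gives $u\in H^2(\Omega)$ with $\Vert u\Vert_{H^2(\Omega)}\le C_{_{\l,A,\Omega}}\Vert f\Vert_{H^2(\Omega)}$. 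Consequently $I_h(u)\in V_h$ is admissible, and choosing $v=I_h(u)$ above with the interpolation estimate (\ref{interpolestH1}) gives
$$
\Vert\nabla_{X_2}(u-w_h)\Vert_{L^2(\Omega)^{N-q}}\le C_{_{\l,A}}h\Big(\sum_{i=1}^N\Vert\partial_i^2u\Vert_{L^2(\Omega)}^2\Big)^{1/2}\le C_{_{\l,f,A,\Omega}}h,
$$
which is the first assertion.

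\emph{Case $f\in H^2(\Omega)$.} The obstruction is that when $f$ does not vanish on $\partial\Omega$, the solution $u$ need not vanish on $\partial\omega_1\times\omega_2$, so $u\notin H_0^1(\Omega)$ and $I_h(u)\notin V_h$: the interpolant is no longer a competitor in $V_h$. To circumvent this I would invoke the decomposition Lemma \ref{lem-decomp-bis}, applicable because $\Omega=(0,1)^N$ is Lipschitz (hence satisfies (\ref{D})) and $H^2\hookrightarrow L^\infty$ in dimensions $2,3$: for $\delta\in(0,1)$ split $f=f_\delta^1+f_\delta^2$ with $f_\delta^1\in H_0^1\cap H^2(\Omega)$, $\Vert f_\delta^1\Vert_{H^2(\Omega)}\le C_{_f}\delta^{-3/2}$, and $\Vert f_\delta^2\Vert_{L^2(\Omega)}\le C_{_f}\delta^{1/2}$. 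Writing $w_h^i,u^i$ for the solutions of (\ref{1numlim-bis}), (\ref{u-limit-weak}) with datum $f_\delta^i$, linearity gives $w_h=w_h^1+w_h^2$, $u=u^1+u^2$. The part $f_\delta^1$ is exactly the previous case, so
$$
\Vert\nabla_{X_2}(u^1-w_h^1)\Vert_{L^2(\Omega)^{N-q}}\le C_{_{\l,A}}h\,\Vert u^1\Vert_{H^2(\Omega)}\le C_{_{\l,A,\Omega}}h\,\Vert f_\delta^1\Vert_{H^2(\Omega)}\le C_{_{\l,f,A,\Omega}}h\delta^{-3/2}.
$$
For the part $f_\delta^2$ I avoid interpolation entirely and use only the energy estimate: testing (\ref{1numlim-bis}) with $w_h^2$ and (\ref{u-limit-weak}) with $u^2$, then applying (\ref{hypA_221}) and (\ref{sob}), yields $\Vert\nabla_{X_2}w_h^2\Vert,\Vert\nabla_{X_2}u^2\Vert\le \frac{C_{\omega_2}}{\lambda}\Vert f_\delta^2\Vert_{L^2(\Omega)}$, hence by the triangle inequality $\Vert\nabla_{X_2}(u^2-w_h^2)\Vert\le C_{_\l}\Vert f_\delta^2\Vert_{L^2(\Omega)}\le C_{_{\l,f}}\delta^{1/2}$.

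Summing the two contributions gives, for every $\delta\in(0,1)$,
$$
\Vert\nabla_{X_2}(u-w_h)\Vert_{L^2(\Omega)^{N-q}}\le C_{_{\l,f,A,\Omega}}\big(h\delta^{-3/2}+\delta^{1/2}\big),
$$
and balancing the two terms by the choice $\delta=h^{1/2}$ produces the rate $h^{1/4}$, which is the second assertion. The hard point is precisely the loss of the homogeneous boundary condition in the $X_1$ direction when $f\notin H_0^1(\Omega)$: it is what forces the detour through Lemma \ref{lem-decomp-bis}, and the trade-off between the $\delta^{-3/2}$ blow-up of the regular part and the $\delta^{1/2}$ smallness of the remainder is exactly what degrades the optimal $O(h)$ into $O(h^{1/4})$.
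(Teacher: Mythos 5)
Your proof is correct and takes essentially the same route as the paper: C\'ea's lemma for the degenerate bilinear form $\int_\Omega A_{22}\nabla_{X_2}v\cdot\nabla_{X_2}w\,dx$ combined with the interpolation estimate (\ref{interpolestH1}) and the regularity results (Theorems \ref{thm-tauxconvergence} and \ref{thm-regularityH2-u}) for the case $f\in H^1_0\cap H^2(\Omega)$, then the decomposition of Lemma \ref{lem-decomp-bis} with the balancing choice $\delta=h^{1/2}$ for the general case $f\in H^2(\Omega)$. The only differences are presentational: you make explicit the Galerkin orthogonality behind C\'ea's lemma and the reason why $u\in H^1_0(\Omega)$ (hence $I_h(u)\in V_h$), which the paper leaves implicit.
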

\begin{proof}

1) Suppose that $f \in H^1_0 \cap H^2(\Omega)$:\\
By using the classical Céa's Lemma we obtain from (\ref{u-limit-weak}) and (\ref{1numlim-bis}) the following 
\begin{equation*}
\left\Vert \nabla_{X_2}(w_{h}-u)\right\Vert _{L^{2}(\Omega )^{N-q}}\leq \frac{%
\left\Vert A_{22}\right\Vert _{L^{\infty }}}{\lambda }\inf_{v\in
V_{h}}\left\Vert \nabla_{X_2}(v-u)\right\Vert _{L^{2}(\Omega )}.
\end{equation*} 
Now, according to Theorem \ref{thm-regularityH2-u} and Theorem \ref{thm-tauxconvergence} we have $u \in H_{0}^{1}(\Omega )\cap H^{2}(\Omega )$, then $I_h(u) \in V_h$. Therefore, from the above inequality we obtain
\begin{equation*}
\left\Vert \nabla_{X_2}(w_{h}-u)\right\Vert _{L^{2}(\Omega )^{N-q}}\leq \frac{%
\left\Vert A_{22}\right\Vert _{L^{\infty }}}{\lambda }\left\Vert \nabla_{X_2}(I_h(u)-u)\right\Vert _{L^{2}(\Omega )}
\end{equation*}
Finally, by applying (\ref{interpolestH1}) and Theorem \ref{thm-regularityH2-u} to the right hand side of the above inequality, we get 
\begin{equation*}
\left\Vert \nabla_{X_2}(w_{h}-u)\right\Vert _{L^{2}(\Omega )^{N-q}}\leq C_{_{\l,f,A,\Omega}} h.
\label{wh-u}
\end{equation*}
2)Suppose that $f \in H^2(\Omega)$: \\
We use the decomposition trick of Lemma \ref{lem-decomp-bis}, and let $w_h^i$, $u^i$, $i=1,2$ be the solutions of (\ref{1numlim-bis}) and (\ref{u-limit-weak}) with $f$ replaced by $f_{\delta}^i$ respectively. As in the previous case we have 
\begin{equation*}
\left\Vert \nabla_{X_2}(w_{h}^1-u^1)\right\Vert _{L^{2}(\Omega )^{N-q}}\leq \frac{%
\left\Vert A_{22}\right\Vert _{L^{\infty }}}{\lambda }\left\Vert \nabla_{X_2}(I_h(u^1)-u^1)\right\Vert _{L^{2}(\Omega )},
\end{equation*}
and the application of (\ref{interpolestH1}) to the right hid side of the previous inequality gives
\begin{equation*}
\left\Vert \nabla_{X_2}(w_{h}^1-u^1)\right\Vert _{L^{2}(\Omega )}\leq \frac{%
\left\Vert A_{22}\right\Vert _{L^{\infty }}}{\lambda }\left\Vert u^1\right\Vert _{H^{2}(\Omega )}h,
\end{equation*}
and by Theorem \ref{thm-regularityH2-u} we get 
\begin{equation*}
\left\Vert \nabla_{X_2}(w_{h}^1-u^1)\right\Vert _{L^{2}(\Omega )^{N-q}}\leq C_{_{\l,A,\Omega}}\left\Vert f_{\delta}^1\right\Vert _{H^{2}(\Omega )}h,
\end{equation*}
whence by applying Lemma \ref{lem-decomp-bis} to the right hand side of the previous inequality we obtain 
\begin{equation*}
\left\Vert \nabla_{X_2}(w_{h}^1-u^1)\right\Vert _{L^{2}(\Omega )^{N-q}}\leq C_{_{\l,f,A,\Omega}}\delta^{-3/2}h,
\end{equation*}
Testing with $w_h^2$, $u^2$ in (\ref{1numlim-bis}) and (\ref{u-limit-weak}) (with $f$ replaced by by $f_{\delta}^2$) we get
\begin{equation*}
\left\Vert \nabla_{X_2}(w_{h}^2-u^2)\right\Vert _{L^{2}(\Omega )^{N-q}}\leq C_{_{f,\Omega}}\delta^{1/2}.
\end{equation*}
Combining these two last inequalities and using the triangle inequality to obtain
\begin{equation*}
\left\Vert \nabla_{X_2}(w_{h}-u)\right\Vert _{L^{2}(\Omega )^{N-q}}\leq C_{_{\l,f,A,\Omega}}\Big(\delta^{-3/2}h+\delta^{1/2}\Big).
\end{equation*}
Finally, we choose $\delta=h^{1/2}$ we obtain
\begin{equation*}
\left\Vert \nabla_{X_2}(w_{h}-u)\right\Vert _{L^{2}(\Omega )^{N-q}}\leq C_{_{\l,f,A,\Omega}}h^{1/4}.
\end{equation*}
\end{proof}
\textbf{Step 3.}
We have the following
\begin{lem}\label{lem-step3}
Assume that assumptions of Proposition \ref{prop-num2} hold, then:
$$\left\Vert \nabla_{X_2}(w_{h}-u_h)\right\Vert _{L^{2}(\Omega )}\leq C_{_{f,\Omega}} h.$$
\end{lem}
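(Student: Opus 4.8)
The plan is to exploit that $w_h$ and $u_h$ solve identical discrete problems on the same space $V_h$ with the same bilinear form $a(\cdot,\cdot)=\int_\Omega A_{22}\nabla_{X_2}(\cdot)\cdot\nabla_{X_2}(\cdot)\diff x$, the only difference being the data: $f$ in (\ref{1numlim-bis}) versus $I_h(f)$ in (\ref{1numlim}). Consequently the difference $w_h-u_h$ satisfies an \emph{exact} discrete identity, so the whole error comes from the data perturbation $f\mapsto I_h(f)$ and there is no genuine obstacle; the estimate is a one-line energy argument followed by the $L^2$ interpolation bound.

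First I would subtract (\ref{1numlim}) from (\ref{1numlim-bis}) to obtain, for every $v\in V_h$,
\begin{equation*}
\int_{\Omega}A_{22}\nabla_{X_2}(w_h-u_h)\cdot\nabla_{X_2}v\diff x=\int_{\Omega}\big(f-I_h(f)\big)v\diff x.
\end{equation*}
Since $w_h-u_h\in V_h$, I would test with $v=w_h-u_h$ and invoke the ellipticity (\ref{hypA_221}) to bound the left-hand side from below by $\lambda\Vert\nabla_{X_2}(w_h-u_h)\Vert^2_{L^2(\Omega)^{N-q}}$. For the right-hand side I would apply the Cauchy--Schwarz inequality, giving the factor $\Vert f-I_h(f)\Vert_{L^2(\Omega)}\,\Vert w_h-u_h\Vert_{L^2(\Omega)}$, and then control the second factor by Poincar\'e's inequality (\ref{sob}), which applies because $V_h\subset H^1_0(\Omega)\hookrightarrow H^1_0(\Omega;\omega_2)$; this yields $\Vert w_h-u_h\Vert_{L^2(\Omega)}\leq C_{\omega_2}\Vert\nabla_{X_2}(w_h-u_h)\Vert_{L^2(\Omega)^{N-q}}$.

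Dividing through by $\Vert\nabla_{X_2}(w_h-u_h)\Vert_{L^2(\Omega)^{N-q}}$ leaves
\begin{equation*}
\Vert\nabla_{X_2}(w_h-u_h)\Vert_{L^2(\Omega)^{N-q}}\leq\frac{C_{\omega_2}}{\lambda}\Vert f-I_h(f)\Vert_{L^2(\Omega)}.
\end{equation*}
Finally I would close the argument with the interpolation estimate (\ref{interpolestL2}), which gives $\Vert f-I_h(f)\Vert_{L^2(\Omega)}\leq C_N h\,(\sum_{i=1}^N\Vert\partial_i f\Vert^2_{L^2(\Omega)})^{1/2}$; this is legitimate since $f\in H^2(\Omega)$ by assumption (\ref{fH2}), so in particular $\nabla f\in L^2(\Omega)^N$. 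Combining the two displays absorbs $\lambda$, $C_{\omega_2}$, $C_N$ and $\Vert\nabla f\Vert_{L^2(\Omega)^N}$ into the single constant $C_{_{f,\Omega}}$ and produces exactly $\Vert\nabla_{X_2}(w_h-u_h)\Vert_{L^2(\Omega)}\leq C_{_{f,\Omega}}\,h$. The only point worth flagging is that, unlike Lemma \ref{lem-step2}, no decomposition via Lemma \ref{lem-decomp-bis} is needed here: because both sides share the discrete operator, the clean rate $h$ survives even for merely $H^2$ data.
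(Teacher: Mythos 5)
Your proposal is correct and is essentially identical to the paper's own proof: the paper likewise subtracts (\ref{1numlim-bis}) from (\ref{1numlim}), tests with $w_h-u_h$, and applies the interpolation estimate (\ref{interpolestL2}); you have merely spelled out the intermediate steps (ellipticity, Cauchy--Schwarz, Poincar\'e) that the paper leaves implicit. The only cosmetic difference is bookkeeping of the constant, which in both versions really depends on $\lambda$ as well as on $f$ and $\Omega$.
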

\begin{proof}
Subtracting (\ref{1numlim-bis}) from (\ref{1numlim}) and testing with $w_h-u_h$, we get 
\begin{equation*}
\left\Vert \nabla_{X_2}(w_{h}-u_h)\right\Vert _{L^{2}(\Omega )^{N-q}}\leq C_{_{\Omega}} \Vert f-I_h(f) \Vert _{L^2(\Omega)}.
\end{equation*}
By using (\ref{interpolestL2}) in the right hand side of the previous inequality we get 
\begin{equation*}
\left\Vert \nabla_{X_2}(w_{h}-u_h)\right\Vert _{L^{2}(\Omega )^{N-q}}\leq C_{_{\Omega}}\Vert f \Vert_{H^1(\Omega)} h.
\label{wh-uh}
\end{equation*}
\end{proof}
\textbf{Step 4. } 
Now, we are ready to conclude. By using  the triangle inequality we get 
\begin{align*}
\left\Vert \nabla_{X_2}(u_{\epsilon,h}-u_\epsilon)\right\Vert _{L^{2}(\Omega )^{N-q}}\leq \left\Vert \nabla_{X_2}(u_{\epsilon,h}-u_h)\right\Vert _{L^{2}(\Omega )^{N-q}} + \left\Vert \nabla_{X_2}(u_{h}-w_h)\right\Vert _{L^{2}(\Omega )^{N-q}}  \\
+ \left\Vert \nabla_{X_2}(w_{h}-u)\right\Vert _{L^{2}(\Omega )^{N-q}} +\left\Vert \nabla_{X_2}(u-u_\epsilon) \right\Vert _{L^{2}(\Omega )^{N-q}}
\end{align*}
Finally, from Lemmas \ref{lem-step1}, \ref{lem-step2}, \ref{lem-step3} and Theorem \ref{thm-tauxconvergence-H1} we get 
\begin{align*}
\left\Vert \nabla_{X_2}(u_{\epsilon,h}-u_\epsilon)\right\Vert _{L^{2}(\Omega )^{N-q}}\leq C_{_{\l,f,A,\Omega}} (\epsilon^{1/2} +h^{1/4}) \text{, when } f \in H^2(\Omega),
\end{align*}
and 
\begin{align*}
\left\Vert \nabla_{X_2}(u_{\epsilon,h}-u_\epsilon)\right\Vert _{L^{2}(\Omega )^{N-q}}\leq C_{_{\l,f,A,\Omega}} (\epsilon +h) \text{, when } f \in H^1_0\cap H^2(\Omega),
\end{align*}
and the proof of Proposition \ref{prop-num2} is achieved.

In conclusion, we combine the \textcolor{black}{estimates} given in Proposition \ref{prop-num1} and Proposition \ref{prop-num2} to obtain 
\begin{align*}
\left\Vert \nabla_{X_2}(u_{\epsilon,h}-u_\epsilon)\right\Vert _{L^{2}(\Omega )}\leq C_{_{\l,f,A,\Omega}}h^{1/5}, \text{ when } f\in H^2(\Omega),
\end{align*}
and 
\begin{align*}
\left\Vert \nabla_{X_2}(u_{\epsilon,h}-u_\epsilon)\right\Vert _{L^{2}(\Omega )}\leq C_{_{\l,f,A,\Omega}} h^{1/3},\text{ when } f\in H^1_0 \cap H^2(\Omega),
\end{align*}
and the proof of Theorem \ref{theorem-num} is finished.
\textcolor{black}{
\begin{rmk} \label{remarkfinal}
Notice that in concrete physical problems $\epsilon$ is very much less-than $1$. In this case, and by using Proposition \ref{prop-num2}, we can write the following sharp estimates:
\begin{equation*}
\Vert \nabla_{X_2}(u_{\epsilon,h}-u_{\epsilon}) \Vert _{L^2(\Omega)^{N-q}} \leq C_{_{\l,f,A,\Omega}} h^{1/4}, \text{ for } f\in H^2(\Omega) \text{ and } \epsilon \leq \sqrt{h}.
\end{equation*} 
\begin{equation*}
\Vert \nabla_{X_2}(u_{\epsilon,h}-u_{\epsilon}) \Vert _{L^2(\Omega)^{N-q}} \leq C_{_{\l,f,A,\Omega}} h, \text{ for } f\in H^2 \cap H^1_0(\Omega) \text{ and } \epsilon \leq h.
\end{equation*} 
\end{rmk}}

\subsection{Numerical experiments}
\textcolor{black}{
We consider the problem (\ref{prototype2d}) in $\Omega=(0,1)^2$ with $f(x)= \sin \pi x_1 \sin \pi x_2$. The exact solution of (\ref{prototype2d}) is given by 
\begin{equation*}
u_\epsilon(x) = \frac{1}{\pi^2(\epsilon^2 +1)}\sin \pi x_1 \sin \pi x_2. 
\end{equation*}
In this example, we place ourselves in the case $f \in H^1_0 \cap H^2 (\Omega)$. \\
In the following table we give the approximation of the error $\left\Vert \partial_{x_2}u_{\epsilon,h}-\partial_{x_2}u_\epsilon\right\Vert _{L^{2}(\Omega )}$ calculated for several values of $h$ and $\epsilon$. \\
}

\begin{table}[htbp] 
\begin{center} 
   \begin{tabular}{|c| c | c | c | c| c| c|}
     \hline
     & $\epsilon=1$ & $\epsilon=0.75$ & $\epsilon=0.5$ & $\epsilon=0.1$ & $\epsilon=0.01$& $\epsilon=10^{-6}$   \\ \hline
    $h=0.1$ & 0.007211 & 0.009230 & 0.011537& 0.014279 & 0.014420 & 0.014422
 \\ \hline
    $h=0.02$ &0.001443 & 0.001847
 & 0.002309 
& 0.002858 & 0.002886 & 0.002886
 \\ \hline
     $h=0.01$ & 0.000721 & 0.000923 
 & 0.001154 
 & 0.001429 & 0.001443 & 0.001443
 \\
     \hline
     $h=0.001$ & $7.21 \times 10^{-5}$ & $ 9.23 \times 10^{-5}$
 & 0.000115
 & 0.000142
 & 0.000144 & 0.000144
 \\ \hline
   \end{tabular}
 \end{center}
   \caption{ \label{table-error}}
 \end{table}

\textcolor{black}{
It is clear that the error is controlled by $Ch^{1/3}$, and that illustrates the result of Theorem \ref{theorem-num}. For the values of $\epsilon$ and $h$ such that $\epsilon \leq h$ we remark that the error is controlled by $Ch$ and that illustrates the sharp estimates given in Remark \ref{remarkfinal}. For the values of $\epsilon$ such that $\frac{1}{\epsilon^2}$ is not big (for instance $\epsilon =1,\text{ }0.75, \text{ or }0.5$) the corresponding values of the error are controlled by $Ch$ and that illustrates the estimate given in Proposition \ref{prop-num1}.\\
Now,let us give the following graphic representation of Table \ref{table-error}
}

\begin{figure}[!ht]
\begin{center}
\includegraphics[scale=0.6]{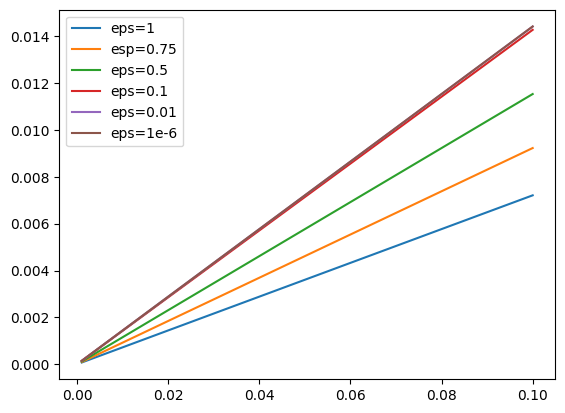}
\caption{ \label{figure-error}}
\end{center}
\end{figure}

\textcolor{black}{
We can see that the error is of the order of $h$ uniformly in $\epsilon$, for this particular example, that is due to the high regularity of $f$ and $A$. \\
}

\end{document}